\documentclass[12pt]{amsart}
\usepackage{amsmath}
\usepackage{amsthm}
\usepackage{amssymb}
\usepackage{verbatim}

\usepackage{subfigure}
\usepackage{verbatim}
\usepackage{url}
\numberwithin{equation}{section}
\usepackage{textcomp}

\usepackage{xy}
\usepackage{color}
\usepackage[pdftex]{graphicx}
\usepackage{epstopdf}
\usepackage{mathrsfs}
\usepackage{enumerate}
\usepackage{hyperref}
\usepackage{color}

\usepackage[dvipsnames]{xcolor}

\textheight22.8cm \textwidth15.6cm \hoffset-1.7cm \voffset-.5cm

\def\displayandname#1{\rlap{$\displaystyle\csname #1\endcsname$}%
                      \qquad \texttt{\char92 #1}}

\makeatletter
\def\url@leostyle{%
  \@ifundefined{selectfont}{\def\UrlFont{\sf}}{\def\UrlFont{\small\ttfamily}}}
\makeatother

\urlstyle{leo}

\DeclareMathAlphabet{\mathbbold}{U}{bbold}{m}{n}
\newcommand{\zero}{\mathbbold{0}}
\newcommand{\unit}{\mathbbold{1}}

\newcommand{\per}{\operatorname{per}}

\newcommand{\Id}{\operatorname{Id}}

\newcommand\ap{{\operatorname{ap}}}

\newcommand\adj{{\operatorname{adj}}}
\newcommand{\rmax}{\mathbb{R}_{\max}}
\newcommand{\R}{\mathbb{R}}

\newcommand{\rcong}{\mathbin{\rotatebox[origin=c]{-90}{$\cong$}}}
\newcommand{\lin}{\mathbin{\rotatebox[origin=c]{90}{$\in$}}}
\newcommand{\lfeq}{\mathbin{\rotatebox[origin=c]{90}{$=$}}}

\newcommand{\lleq}{\mathbin{\rotatebox[origin=c]{-90}{$\leq$}}}

\newtheorem{thm}{Theorem}[section]
\newtheorem{pro}[thm]{Proposition}
\newtheorem{lem}[thm]{Lemma}

\newtheorem{cor}[thm]{Corollary}

\newtheorem{obs}[thm]{Observation}

\newtheorem{alg}{Algorithm}

\theoremstyle{definition}
\newtheorem{df}[thm]{Definition}

\theoremstyle{remark}
\newtheorem{rem}[thm]{Remark}
\newtheorem{exa}[thm]{Example}

\newtheorem{nota}[thm]{Notation}

\usepackage[colorinlistoftodos,bordercolor=orange,backgroundcolor=orange!20,linecolor=orange,textsize=scriptsize]{todonotes}
\usetikzlibrary{arrows}
\title{Optimal assignments with supervisions}

\author{{A}di Niv}
\address{Adi Niv,
Mathematics Department, Science Faculty, Kibbutzim College.
Address: Kibbutzim College, 149 Namir Rd., Tel-Aviv, Israel.}
\email{adi.niv@smkb.ac.il}

\author{{M}arie MacCaig}
\address{Marie MacCaig,
CMAP, \'Ecole  Polytechnique.
Address:  Route de Saclay,~91128 ${\ \ }$ Palaiseau Cedex, France.}
\email{m.maccaig.maths@outlook.com}

\author{{S}erge\u{\i} Sergeev}
\address{Serge\u{\i} Sergeev,
School of Mathematics, University of Birmingham.
Address: Watson Building, University of Birmingham, Edgbaston Birmingham B15 2TT UK.}
\email{s.sergeev@bham.ac.uk}

\thanks{The first author was supported by INRIA postdoctoral fellowship and the Chateaubriand grant.}
\thanks{The second author was supported by a public grant as part of the \emph{Investissement d'avenir} project, reference ANR-11-LABX-0056-LMH, LabEx LHM.} 
\thanks{The third author was supported by EPSRC Grant EP/P019676/1.}
\begin{document}
  
\begin{abstract} 
In this paper we provide a new graph theoretic proof of the tropical Jacobi identity, recently obtained in~\cite{AGN}. 
We also develop an application of this theorem to optimal assignments with supervisions. That is,
 optimally    assigning  multiple tasks  to one team, or daily tasks
to multiple teams, where  each team has a supervisor task or a supervised task. 

\vskip 0.15 truecm

\noindent \textit{Keywords: Optimal assignment problem, tropical algebra, weighted graphs, compound matrix, permanent.}
\vskip 0.1 truecm

\noindent \textit{AMSC: 05C17; 05C22; 05C38; 05C50; 05E15; 15A15; 15A24; 15A80; 90B80.} 	

\end{abstract}

\maketitle

\thispagestyle{empty}


\section{Introduction} 
The tropical semiring $\R_{\max}$ is the set $\R\cup\{-\infty\}$ of real numbers  formally joined with~$-\infty$, equipped with the additive operation $a\oplus b=\max\{a,b\}$  
and the multiplicative operation $a\odot b=a+b$ (for all $a,b\in\R_{\max}$). 
In this language, 
 the  \textit{tropical permanent} of a matrix $A\in\R_{\max}^{n\times n}$ is 
$$\per(A)=\bigoplus_{\pi\in S_n}\ \bigodot_{i\in[n]}A_{i,\pi(i)}=\max_{\pi\in S_n}\ \sum_{i\in[n]}A_{i,\pi(i)},$$ and $\sigma\in S_n$ is an \textit{optimal permutation} if 
$\per(A)=\bigodot_{i\in[n]}A_{i,\sigma(i)}=\sum_{i\in[n]}A_{i,\sigma(i)}$. Here $[n]=\{1,\ldots,n\}$ and 
$S_n$ is the set of all permutations on $[n]$.

For~$A\in\rmax^{n\times n}$ we
 denote by~$A[I,J]$ or~$A_{I, J}, \text{where}\ I\subset [n], J\subset [n],$  the~$|I|\times |J|$ \textit{submatrix}  of $A$ with rows in~$I$ and columns in~$J$. Given $I\subseteq [n]$ we denote by $I^c$ the complement
of $I$ (so that $I\cup I^c=[n]$ and $I\cap I^c=\emptyset$).

The \textit{tropical adjoint} $\adj(A)$ of $A\in\R_{\max}^{n\times n}$ is defined by 
$\adj(A)_{i,j}=\per(A_{\{j\}^c,\{i\}^c}),$
i.e.~$\big(\sum_{i\in[n]}A_{i,\pi(i)}\big)-A_{j,i}$ for some permutation~$\pi\in S_n$ such that $\pi(j)=i$ and 
such that it is optimal among all such permutations.

Motivated by  Butkovic's combinatorial interpretations of various objects in tropical algebra (see~\cite{MA}), and in particular, since optimal permutations are  associated to the optimal assignment problem, we ask whether there exists an interpretation to tropical identities, focusing on Jacobi identity, using some form of `partial assignment problem' and/or `multiple assignment problem'.

The tropical Jacobi identity of a matrix $A$, if $\per(A)=0$, states that the permanent 

of a $k\times k$ submatrix of $\adj(A)$ is either equal to or surpasses the  permanent of the corresponding $(n-k)\times (n-k)$-submatrix of $A$.  This tropical identity was obtained in~\cite{AGN}, motivated by the classical Jacobi identity described in~\cite[Section~1.2]{Fallat&Johnson}.

The digraph associated with the permanent of~$A$ describes the well known optimal assignment problem in the sense that 
$\per(A)$ provides the weight of  optimal permutations in the digraph associated with~$A$.
When we consider the digraph associated with~$ \adj(A)_{i,j}$, we are optimizing over permutations in~$S_n$ with a single edge removed. 
In this language, we provide a graph theoretic proof of a tropical analogue of the Jacobi identity,  by showing that the weight of an optimal permutation of the digraph associated with
 a $k\times k$ submatrix of $\adj(A)$ either equals to the
weight of the  optimal permutation of the digraph associated with the corresponding $(n-k)\times (n-k)$-submatrix of $A$, or there exist at least two such optimal permutations.

 Motivated by this graph theoretic proof, we study the optimal assignment problem under a  given condition or requirement that a person~$i$ performs a fixed
 assignment~$j$, 
whose cost/profit is out of consideration and would later be considered as the supervisor assignment. 

We develop applications to team assignments, where supervision needs to take place. 
Thus we show that the Jacobi identity is closely related to optimizing  multiple tasks, involving    
multiple teams, daily assignments, supervisor assignment or supervised assignments.


\section{Preliminaries}

We provide some known  graph theory definitions, as well as define ways to represent multiple assignments as series of perfect matchings.

\subsection{Basic definitions}
\begin{df} 
A \textit{digraph} (or  {directed graph}) is an ordered pair $G=(V_G,E_G)$ where
$V_G$  is a set whose elements are called \textit{nodes} (or vertices),
and $E_G$ is a set of ordered pairs of vertices, called \textit{directed edges} (or  arcs),
allowing loops and multiple edges.

The number of edges terminating (resp.~originating) at~$v\in V_G$ is denoted by~$\deg^-(v)$ (resp.~$\deg^+(v)$). 
The \textit{source} (resp.~\textit{target}) of an edge $e\in E_G$ is denoted by $s(e)$ (resp.~$t(e)$). The edge $e$ may 
also be denoted by $(v_i,v_j) $ if $v_i=s(e), v_j=t(e)$.  A  graph  is called \textit{simple} if it does not have multiple edges.

A \textit{multigraph} is a (di)graph which is permitted to have multiple edges that have the same end nodes. 
Thus two vertices may be connected by more than one edge.
We say two multigraphs are equal if they have the same edge set, counting multiplicities.
\end{df}
\begin{df} A \textit{bipartite graph}~$H=(V_{H,1},V_{H,2},E_H)$ is a nondirected graph such that $i\in V_{H,1}$ if and only if $j\in V_{H,2}$  for every $(i,j)\in E_H$. The number of edges exiting $v\in V_{H,1}\cup V_{H,2}$ is denoted by $\deg(v)$. We say $H$ is \textit{equally partitioned} if $|V_{H,1}|=|V_{H,2}|$.
The \textit{complete bipartite graph}, denoted $K_{m, n}$ is the bipartite graph $$G=([m], [n], E):\ E=\{(u,v): u\in [m], v\in [n]\}.$$
A \textit{star} is the complete bipartite graph $K_{1,k}$, denoted as $ST_k$.
\end{df}
\begin{df} A graph (resp.~digraph, and in particular multigraph) $G=(V, E)$ is \textit{$k$-regular} if $\deg(v)=k $
 ({resp.~}$\deg^{+}(v)=\deg^{-}(v)=k),\ \forall v\in V. $

\end{df}

\begin{df} A \textit{path} in a digraph $G=(V, E)$ is a sequence of nodes and edges, $P=(v_1, e_1, v_2, e_2,\dots, e_{n-1}, v_{n})$ such that, for all $i\in [n-1]$, $e_i=(v_i,v_{i+1})$. In particular, $s(P)=s(e_1)=v_1,\ t(P)=t(e_{n-1})=v_n$.
If $v_1=v_n$, then $P$ is \textit{closed}.
If $P$ is a path in which all intermediate nodes are distinct, and different from its source and target, then $P$ is \textit{elementary} and denoted by $(v_1, v_2,\dots, v_{n})$ (when clear which edges are used).  The \textit{length of a path} $\ell(P)=n-1$ is the number of its edges. 
A \textit{cycle} is an elementary closed path, denoted by $(v_1, v_2,\dots, v_{n-1}, v_1)$.  
\end{df}

\begin{df} Let $G=(V,E)$ be a graph and $E'\subseteq E$. The \textit{subgraph of $G$ induced by $E'\subseteq E$} is the subgraph  $G'=(V(E'),E')$, where $V(E')\subseteq V$ denotes the set of endpoints (sources and targets when $G$ is a digraph) of $ E'$.
\end{df}

\begin{df} If $G=(V_G, E_G)$ and $H=(V_H, G_H)$, then $G+H=(V_G\uplus V_H, E_G\uplus E_H)$ is called the \emph{disjoint union graph}.
The graph $kG$ is formed of $k$ disjoint copies of $G$.
\end{df}

\begin{df} We say $G=(V,E)$ is a  \textit{$k$-bipartite}
graph if $$E=\biguplus_{r=1}^k E_r ,$$  where $B_r=(U_r, V_r, E_r),\ U_r, V_r\subseteq V,\ \forall r\in[k]$ are bipartite graphs.
We say $G$ is \textit{equally 
partitioned} if $|U_1|=\dots=|U_k|=|V_1|=\dots=|V_k|.$

The graph~$G$ is \textit{disjoint-$k$-bipartite} 
if~$G=B_1+\dots+B_k$. 
The graph~$G$ is  \textit{star-$k$-bipartite}  if~$\{B_r\}_{r\in[k]}$ are
   \emph{glued} at a common vertex set:~$V_r=V,\ \forall r\in[k]$, 
and~$G$ may be denoted by~$(U_1,\dots, U_k, V, E)$.
The graph~$G$ is  \textit{path-$k$-bipartite}  if~$\{B_r\}_{r\in[k]}$ are
   \emph{concatenated}:~$V_r=U_{r+1},\ \forall r\in[k-1]$, 
and~$G$ may be denoted by~$(U_1,\dots U_{k+1}, E)$.

The number of edges exiting~$v\in V_{r}$ towards $V_{r-1}$ (resp.~$V_{r+1}$)  is denoted by~$\deg_t(v)$ (resp.~$\deg_s(v)$). 
\end{df}

\begin{lem}\label{eqv1}
 There exists a one-to-one correspondence   from the set of digraphs with $n$ nodes onto  the set of equally partitioned bipartite graphs with $2n$ nodes.

\end{lem}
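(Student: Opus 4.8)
The plan is to exhibit the bijection explicitly via the standard \emph{vertex-splitting} device: each node of the digraph is split into a \emph{source copy} and a \emph{target copy}, and each arc $(i,j)$ is reinterpreted as an (undirected) edge joining the source copy of its tail to the target copy of its head. Before constructing the maps I would first pin down the conventions so that both sides are genuinely \emph{labeled} objects rather than isomorphism classes, since only then is the correspondence literally a bijection of sets. Thus a digraph with $n$ nodes is taken to be a multigraph on the vertex set $[n]$, and an equally partitioned bipartite graph with $2n$ nodes is a triple $H=(V_{H,1},V_{H,2},E_H)$ with $V_{H,1}=\{u_1,\dots,u_n\}$ and $V_{H,2}=\{w_1,\dots,w_n\}$ two fixed disjoint labeled copies of $[n]$.

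Next I would define the forward map $\Phi$. Given $G=([n],E_G)$, set $\Phi(G)=H$ where $E_H=\{(u_i,w_j):(i,j)\in E_G\}$, with edges counted according to their multiplicity in $E_G$. I would then verify that $\Phi$ is well-defined as a map into equally partitioned bipartite graphs: every edge of $H$ has one endpoint in $V_{H,1}$ and the other in $V_{H,2}$, so $H$ is bipartite; one has $|V_{H,1}|=|V_{H,2}|=n$, so $H$ is equally partitioned on $2n$ nodes; and a loop $(i,i)$ of $G$ is sent to the ordinary (non-loop) edge $(u_i,w_i)$ of $H$, so the bipartite image is consistent with the graph conventions in force.

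Finally I would define the candidate inverse $\Psi$ by $\Psi(H)=([n],E_G)$ with $E_G=\{(i,j):(u_i,w_j)\in E_H\}$, again respecting multiplicities, and then check $\Psi\circ\Phi=\Id$ and $\Phi\circ\Psi=\Id$ by comparing the respective edge multisets termwise, invoking the convention that two multigraphs are equal when they share the same edge set counting multiplicities. This yields that $\Phi$ is a bijection, proving the lemma. The construction itself is routine; the only delicate points, and hence the main obstacle, are the bookkeeping of edge multiplicities (so that multigraphs map to multigraphs) together with the loop-versus-edge conversion, and the insistence on the labeled-category convention, without which the statement would hold only up to isomorphism rather than as an honest one-to-one correspondence.
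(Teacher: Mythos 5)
Your proposal is correct and follows essentially the same route as the paper: duplicating each node into a source copy and a target copy and sending each arc $(i,j)$ to the bipartite edge joining the $i$-th source to the $j$-th target is exactly the correspondence the paper constructs. Your treatment merely spells out the bookkeeping (inverse map, multiplicities, loops, labeled-vertex conventions) that the paper dismisses with ``this correspondence is of course one-to-one and onto.''
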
 

\begin{proof} Let $(V_G,E_G)$  be a digraph, and $(V_{H,1},V_{H,2},E_H)$ be an equally partitioned bipartite graph.
We define the correspondence by duplicating the set $V_G$: $v_i\in V_{H,1}, u_i\in V_{H,2},\ \forall i\in V_G,$
which function as the sets of sources and targets respectively. That is,  $(v_i,u_j)\in E_H,\ \forall (i,j)\in E_G.$
This correspondence is of course  one-to-one and onto.
\end{proof}

\begin{lem}There exists a one-to-one correspondence   between all equally partitioned  $k$-bipartite graphs.
\end{lem}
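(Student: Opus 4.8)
The plan is to prove that the three distinguished families introduced just above---disjoint-, star-, and path-$k$-bipartite graphs---are all in one-to-one correspondence, by showing that each is encoded by the same combinatorial datum: the ordered $k$-tuple $(B_1,\dots,B_k)$ of its constituent bipartite blocks.

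First, I would fix the common cardinality $n=|U_1|=\dots=|V_k|$ supplied by the equal-partition hypothesis and choose bijective labelings of each vertex set $U_r$ and $V_r$ with $[n]$. Under these labelings a block $B_r=(U_r,V_r,E_r)$ is recorded by the edge multiset $\widehat{E}_r=\{(i,j):(u_i,v_j)\in E_r\}\subseteq[n]\times[n]$ (a multiset, to allow for multigraphs); equivalently, by Lemma~\ref{eqv1}, each block corresponds to a digraph on $n$ nodes. Thus an equally partitioned $k$-bipartite graph of any of the three prescribed types is completely determined by, and in turn determines, the tuple $(\widehat{E}_1,\dots,\widehat{E}_k)$.

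Next I would realize the correspondences as vertex identifications that fix these edge data. To pass from the disjoint form $B_1+\dots+B_k$ to the star form, glue the target sets $V_1,\dots,V_k$ into one set $V$ by identifying equally-labeled vertices $v_i^{(r)}\mapsto v_i$, sending each edge $(u_i^{(r)},v_j^{(r)})$ to $(u_i^{(r)},v_j)$; to pass instead to the path form, identify $V_r$ with $U_{r+1}$ block by block via the labelings and concatenate. Each operation leaves every $\widehat{E}_r$ unchanged and is reversible, since the blocks are recovered by restricting to the appropriate source/target layers. Composing these maps links all three families.

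The point requiring care is that these are correspondences of the underlying combinatorial data rather than graph isomorphisms---indeed the disjoint form has $2kn$ nodes while the star and path forms have only $(k+1)n$---so the argument must be carried out at the level of the tuples $(\widehat{E}_1,\dots,\widehat{E}_k)$, checking that gluing never merges edges of distinct blocks nor discards multiplicities. Because the decomposition $E=\biguplus_{r=1}^k E_r$ is part of the data of a $k$-bipartite graph, the block index $r$ is retained throughout, which rules out such collisions and makes the inverse maps well defined.
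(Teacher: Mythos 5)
Your proof is correct and takes essentially the same route as the paper: the paper's entire proof is the one-line remark that the correspondence is ``straightforward, using their $B_r$-presentations,'' and your argument is exactly a careful expansion of that idea, encoding each equally partitioned $k$-bipartite graph by the tuple of its constituent blocks and realizing the disjoint, star and path forms as different gluings of the same block data. Your closing observation---that the decomposition $E=\biguplus_{r=1}^k E_r$ is part of the data, so blocks and multiplicities are never lost under gluing---is precisely the point that makes the paper's ``straightforward'' claim legitimate.
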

\begin{proof} Straightforward, using their $B_r$-presentations.\end{proof}
             
\begin{exa} See Figure~\ref{crsp}. The bold edges correspond to the digraph on top and will be recalled later on.

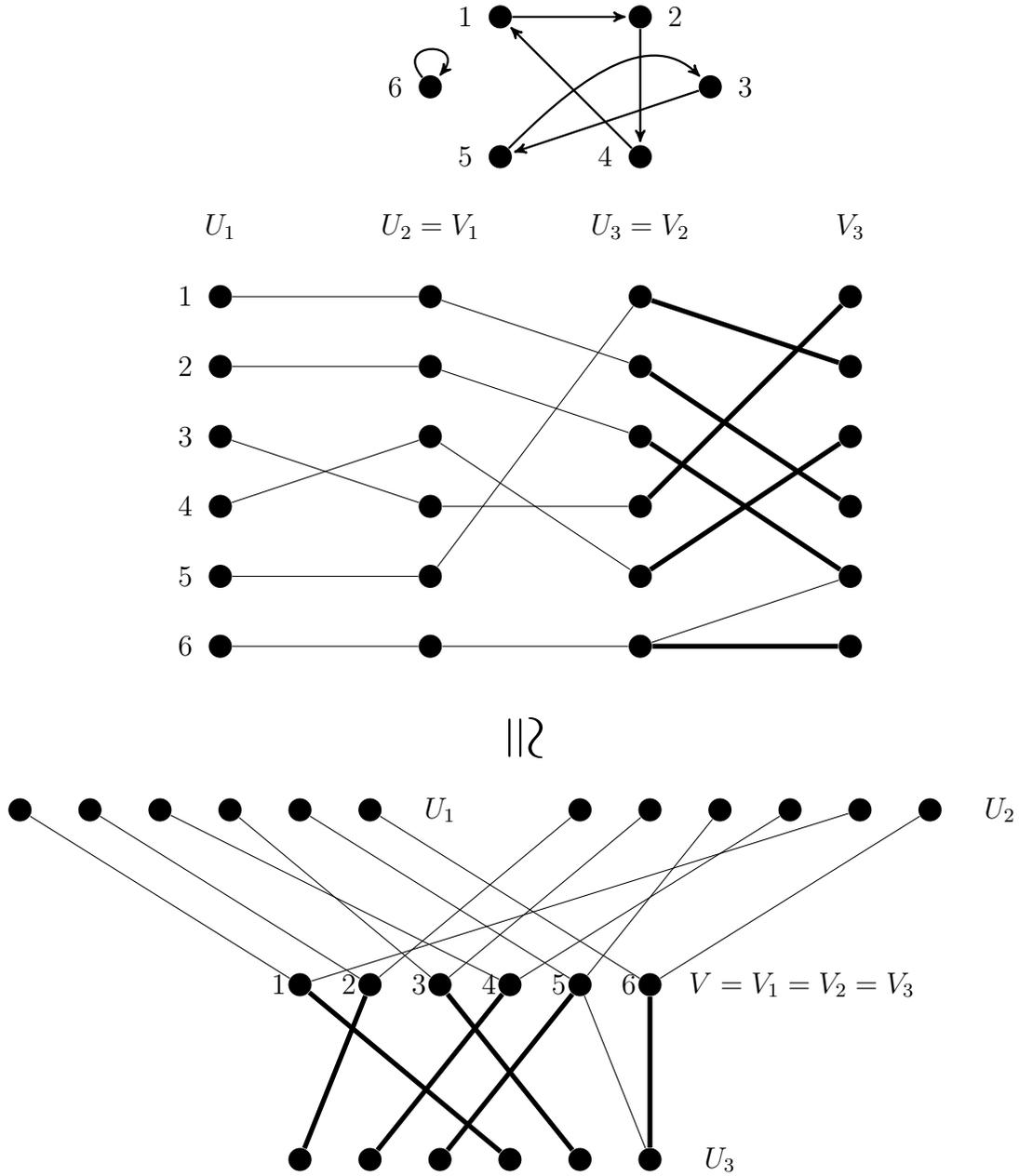
\begin{figure}[h]
\begin{center}
\begin{tikzpicture}[main_node/.style={circle,fill=black,minimum size=0.8em,inner sep=1pt]},
sx_node/.style={->,>=stealth',shorten >=1pt,circle,minimum size=0.8em,inner sep=1pt,line width=0.3mm}]

\node[draw=black,circle,white](u1) at (0,6) {$\color{black}U_1$};   \node[draw=black,circle,white](u2) at (3,6) {$\color{black}U_2=V_1$};   \node[draw=black,circle,white](u3) at (6,6) {$\color{black}U_3=V_2$};\node[draw=black,circle,white](v) at (9,6) {$\color{black}V_3$};
\node[draw=black,circle,white](1) at (-0.5,5) {$\color{black}1$};       \node[main_node] (11) at (0,5) {};          \node[main_node] (21) at (3,5) {};        \node[main_node] (31) at (6,5) {};               \node[main_node] (41) at (9,5) {};  
\node[draw=black,circle,white](2) at (-0.5,4) {$\color{black}2$};    \node[main_node] (12) at (0,4) {};      \node[main_node] (22) at (3,4) {};        \node[main_node] (32) at (6,4) {};      \node[main_node] (42) at (9,4) {};  
\node[draw=black,circle,white](3) at (-0.5,3) {$\color{black}3$};     \node[main_node] (13) at (0,3) {};      \node[main_node] (23) at (3,3) {};             \node[main_node] (33) at (6,3) {};               \node[main_node] (43) at (9,3) {};  
\node[draw=black,circle,white](4) at (-0.5,2) {$\color{black}4$};     \node[main_node] (14) at (0,2) {};       \node[main_node] (24) at (3,2) {};             \node[main_node] (34) at (6,2) {};               \node[main_node] (44) at (9,2) {};  
\node[draw=black,circle,white](5) at (-0.5,1) {$\color{black}5$};   \node[main_node] (15) at (0,1) {};         \node[main_node] (25) at (3,1) {};             \node[main_node] (35) at (6,1) {};               \node[main_node] (45) at (9,1) {};  
 \node[draw=black,circle,white](6) at (-0.5,0) {$\color{black}6$};    \node[main_node] (16) at (0,0) {};      \node[main_node] (26) at (3,0) {};             \node[main_node] (36) at (6,0) {};               \node[main_node] (46) at (9,0) {};  

 \draw[main_node]  (11) edge node{} (21);      \draw[main_node]  (21) edge node{} (32);       \draw[line width=0.7mm]  (31) -- (42);   
  \draw[main_node]  (12) edge node{} (22);      \draw[main_node]  (22) edge node{} (33);       \draw[line width=0.7mm]  (32) -- (44);   
 \draw[main_node]  (13) edge node{} (24);      \draw[main_node]  (23) edge node{} (35);          \draw[line width=0.7mm]  (33) -- (45);   
 \draw[main_node]  (14) edge node{} (23);      \draw[main_node]  (24) edge node{} (34);          \draw[line width=0.7mm]  (34) -- (41);   
 \draw[main_node]  (15) edge node{} (25);      \draw[main_node]  (25) edge node{} (31);          \draw[line width=0.7mm]  (35) -- (43);   
 \draw[main_node]  (16) edge node{} (26);      \draw[main_node]  (26) edge node{} (36);          \draw[line width=0.7mm]  (36) -- (46);             
\draw[main_node]  (36) edge node{} (45);  

\node[main_node] (101) at (4,9) {}; 
\node[white](201) at (3.5,9) {$\color{black}1$};
\node[main_node] (102) at (6,9) {}; 
\node[white](202) at (6.5,9) {$\color{black}2$};

\node[main_node] (106) at (3,8) {}; 
\node[white](206) at (2.5,8) {$\color{black}6$};
\node[main_node] (103) at (7,8) {}; 
\node[white](203) at (7.5,8) {$\color{black}3$};

\node[main_node] (105) at (4,7) {}; 
\node[white](205) at (3.5,7) {$\color{black}5$};
\node[main_node] (104) at (6,7) {}; 
\node[white](204) at (5.5,7) {$\color{black}4$};

 \draw[sx_node]  (101) to  (102);  
 \draw[sx_node]  (102) to  (104);  
 \draw[sx_node]  (104) to  (101);  
 \draw[sx_node]  (103) to  (105);
 \draw[sx_node]  (105) to [out=45,in=135]  (103);    
 \draw[sx_node]  (106) to [out=130,in=50,looseness=8] (106);

\end{tikzpicture}$$\Huge\rcong$$
\begin{tikzpicture}[main_node/.style={circle,fill=black,minimum size=0.8em,inner sep=1pt]},space_node/.style={circle,white,fill=white,minimum size=0.8em,inner sep=1pt]}]
\node[draw=black,circle,white](u1) at (7,3) {$\color{black}U_1$};
\node[main_node](u11)at(1,3){};\node[main_node](u12)at(2,3){};\node[main_node](u13)at(3,3){};\node[main_node](u14)at(4,3){};\node[main_node](u15)at(5,3){};\node[main_node](u16)at(6,3){};

\node[draw=black,circle,white](u2) at (15,3) {$\color{black}U_2$};
\node[main_node](u21)at(9,3){};\node[main_node](u22)at(10,3){};\node[main_node](u23)at(11,3){};\node[main_node](u24)at(12,3){};\node[main_node](u25)at(13,3){};\node[main_node](u26)at(14,3){};

\node[draw=black,circle,white](V) at (11,0.5) {$\color{black}\ \ \ \ \ \ \ \ \ \ \ \ \ \ \ \ \ V=V_1=V_2=V_3$};
\node[main_node](v1)at(5,0.5){};\node[main_node](v2)at(6,0.5){};\node[main_node](v3)at(7,0.5){};\node[main_node](v4)at(8,0.5){};\node[main_node](v5)at(9,0.5){};\node[main_node](v6)at(10,0.5){};

\node[draw=black,circle,white](V) at (11,-2) {$\color{black}U_3$};
\node[main_node](u31)at(5,-2){};\node[main_node](u32)at(6,-2){};\node[main_node](u33)at(7,-2){};\node[main_node](u34)at(8,-2){};\node[main_node](u35)at(9,-2){};\node[main_node](u36)at(10,-2){};

 \draw[main_node]  (v1) edge node{} (u11);      \draw[main_node]  (v1) edge node{} (u25);       \draw[line width=0.7mm]  (v1) -- (u34);   
 \draw[main_node]  (v2) edge node{} (u12);      \draw[main_node]  (v2) edge node{} (u21);       \draw[line width=0.7mm]  (v2) -- (u31);   
 \draw[main_node]  (v3) edge node{} (u14);      \draw[main_node]  (v3) edge node{} (u22);       \draw[line width=0.7mm]  (v3) -- (u35);   
 \draw[main_node]  (v4) edge node{} (u13);      \draw[main_node]  (v4) edge node{} (u24);       \draw[line width=0.7mm]  (v4) -- (u32);   
 \draw[main_node]  (v5) edge node{} (u15);      \draw[main_node]  (v5) edge node{} (u23);       \draw[line width=0.7mm]  (v5) -- (u33);  
 \draw[main_node]  (v6) edge node{} (u16);      \draw[main_node]  (v6) edge node{} (u26);       \draw[line width=0.7mm]  (v6) -- (u36);     \draw[main_node]  (v5) edge node{} (u36); 

\node(1) at (4.7,0.5) {$\color{black}1$};\node(2) at (5.7,0.5) {$\color{black}2$};\node(3) at (6.7,0.5) {$\color{black}3$};
\node(4) at (7.7,0.5) {$\color{black}4$};\node(5) at (8.7,0.5) {$\color{black}5$};\node(6) at (9.7,0.5) {$\color{black}6$};

\end{tikzpicture}\end{center}
\caption{Correspondence between path-$3$-bipartite and star-$3$-bipartite}\label{crsp}\end{figure}
\end{exa}

\subsection{Definitions related to assignment problems}

Let $S_n$ denote the set of  permutations on $[n]$, and $S_{I,J}$ denote the set of  bijections from $I\subseteq[n]$ to $J\subseteq [n]$ (that is, $|I|=|J|$).  
Recall that the zero element~$\zero$ of $\R_{\max}$ is $-\infty$, 
the unit element~$\unit$ of $R_{\max}$ is $0$,  
and for $A\in\mathbb{R}_{\max}^{n\times n}$ the {max-plus permanent} is given by 
$$\per(A)=\max_{\pi\in S_n} \sum_{i\in [n]} A_{i, \pi(i)}.$$

We recall the correspondence between weighted simple digraphs and square matrices.
Let~$G=(V,E,w)$ be a weighted digraph, where~$w(e)$ denotes the weight of the edge $e\in E$.
For an~$n\times n$ matrix $M$, we associate a weighted simple digraph $G_M=([n], E,w)$, 
where for all $M_{i,j}\ne \zero$, $(i,j)\in E$ with~$w(i,j)=M_{i,j}$.
Conversely, for a weighted simple digraph~$G=([n],E,w)$ we associate an~$n\times n$ weight matrix~$M_G$,
where 
$$M_{i,j}=\begin{cases}
w{(i,j)}&;\  \text{if } (i,j)\in E,\\
\zero&;\  \text{otherwise}.
\end{cases}$$
 
See for instance the  correspondence in Figure~\ref{matdig}  between the $3\times 3$ matrix $M$ and the weighted simple digraph
$G$ with node-set $[3]$, where the edge-value denotes its weight. 

\begin{figure}[h]\begin{center}\begin{tikzpicture}
[main_node/.style={->,>=stealth',shorten >=1pt,circle,fill=black,minimum size=0.8em,inner sep=1pt,line width=0.3mm},
sx_node/.style={->,>=stealth',shorten >=1pt,circle,fill=white,minimum size=0.8em,inner sep=1pt,line width=0.3mm},
sec_node/.style={->,>=stealth',shorten >=1pt,circle,out=30,in=150,fill=white,minimum size=0.8em,inner sep=1pt,line width=0.3mm},
thr_node/.style={->,>=stealth',shorten >=1pt,circle,out=-150,in=-30,fill=black,minimum size=0.8em,inner sep=1pt,line width=0.3mm},
fr_node/.style={->,>=stealth',shorten >=1pt,circle,out=150,in=270,fill=black,minimum size=0.8em,inner sep=1pt,line width=0.3mm},
ft_node/.style={->,>=stealth',shorten >=1pt,circle,out=-30,in=90,fill=black,minimum size=0.8em,inner sep=1pt,line width=0.3mm},
prl_node/.style={-,>=stealth',shorten >=1pt,circle,out=240,in=120,fill=black,minimum size=0.8em,inner sep=1pt,line width=0.5mm},
prr_node/.style={-,>=stealth',shorten >=1pt,circle,out=-60,in=60,fill=black,minimum size=0.8em,inner sep=1pt,line width=0.5mm}]

\node[draw=black,circle,white](1) at (-1.3,3) {$\color{black}1$};\node[main_node](v1)at(-1,3){};
\node[draw=black,circle,white](2) at (3.3,3) {$\color{black}2$};\node[main_node](v2)at(3,3){};
\node[draw=black,circle,white](3) at (1,-0.4) {$\color{black}3$};\node[main_node](v3)at(1,0){};
\node[draw=black,circle,white](m11) at (-1,3.75) {$\color{black}M_{1,1}$};

\draw[main_node]  (v3) edge node{$M_{3,2}$} (v2);
 \draw[sx_node]  (v1) to [out=130,in=50,looseness=8] (v1);  
\draw[sec_node]   (v1) edge node{$M_{1,2}$} (v2);  
 \draw[thr_node] (v2) edge node{$M_{2,1}$} (v1);  

   \draw[fr_node]  (v3) edge node{$M_{3,1}$} (v1);   
\draw[main_node]  (v1) edge node{$M_{1,3}$} (v3);  

\node[draw=black,circle,white](11) at (-7.4,3) {$\color{black}M_{1,1}$};
\node[draw=black,circle,white](12) at (-6.2,3) {$\color{black}M_{1,2}$};
\node[draw=black,circle,white](13) at (-5,3) {$\color{black}M_{1,3}$};

\node[draw=black,circle,white](21) at (-7.4,2) {$\color{black}M_{2,1}$};
\node[draw=black,circle,white](22) at (-6.2,2) {$\color{black}\zero$};
\node[draw=black,circle,white](23) at (-5,2) {$\color{black}\zero$};

\node[draw=black,circle,white](31) at (-7.4,1) {$\color{black}M_{3,1}$};
\node[draw=black,circle,white](32) at (-6.2,1) {$\color{black}M_{3,2}$};
\node[draw=black,circle,white](33) at (-5,1) {$\color{black}\zero$};

\node[draw=black,circle,white](s1) at (-4.75,3.5) {$\color{black}$};
\node[draw=black,circle,white](s2) at (-7.75,3.5) {$\color{black}$};
\node[draw=black,circle,white](s3) at (-4.75,0.5) {$\color{black}$};
\node[draw=black,circle,white](s4) at (-7.75,0.5) {$\color{black}$};
\node[draw=black,circle,white](M) at (-9.5,2.5) {$\color{black}M=M_G$};
\node[draw=black,circle,white](=) at (-8.5,2.5) {$\color{black}=$};
\node[draw=black,circle,white](G) at (-1.25,0) {$\color{black}G=G_M$};

\draw[prr_node]   (s1) edge node{} (s3);
\draw[prl_node]   (s2) edge node{} (s4);

\node[draw=black,circle,white](a1) at (-2,2) {$\color{black}$};
\node[draw=black,circle,white](a2) at (-3.75,2) {$\color{black}$};

\draw[main_node]   (a1) edge node{} (a2);\draw[main_node]   (a2) edge node{} (a1);
\end{tikzpicture}\end{center}\caption{Associated square matrix and simple weighted digraph}\label{matdig}\end{figure}
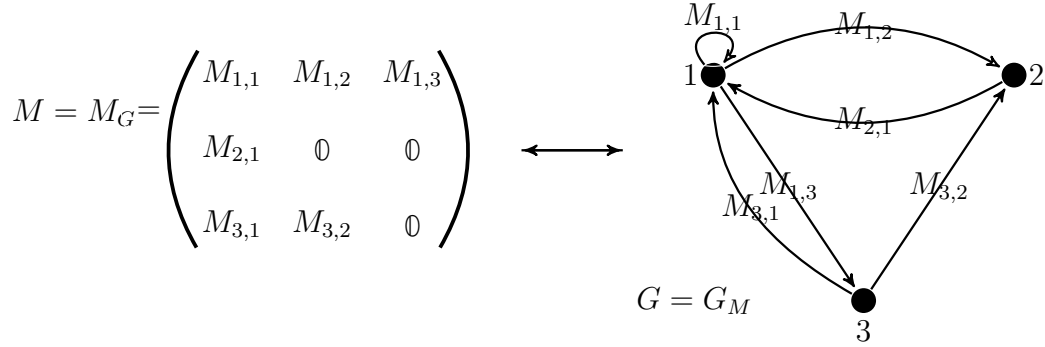

To a max-plus product~$P$ of entries of~$M\in \R^{n\times n}$, we  assign a sub-digraph with  the set  of edges~$E_P$
(multiplicities allowed) 
that correspond to the entries in the  max-plus  product, and the set of nodes~$V(E_P)$. 
In particular, we have the \textit{bijection-subdigraph}~$G=\big(V(E_{\pi}),E_{\pi}\big),$ where $\pi\in S_{I,J}$, corresponding to the max-plus product
 $P=\bigodot_{i\in [I]} M_{i, \pi(i)}$,
and satisfying 
$\deg^+(v)=\deg^-(u)=1,\ \forall v\in I, u\in J.$

Thus, in the sense of Lemma~\ref{eqv1},  $G$ corresponds  to a $1$-regular bipartite graph $B$, where $V_{B,1}=I,\ V_{B,2}=J$, 
 which is a perfect matching.
The set of permutations of maximal weight   in $M$ (or optimal permutations of $M$),  denoted by   
$$\ap(M)=\{\pi\in S_n : \per(M)=\bigodot_{i\in [n]} M_{i, \pi(i)}\},$$ 
 is identical to the set   of optimal solutions to the assignment problem in the graph 
corresponding to the digraph associated with $M$.

\begin{exa}\
\begin{enumerate}
\item Every 
non-$\zero$ (max-plus) summand $\bigodot_{i\in [n]}M_{i,\pi(i)}$ in the permanent of $M$ may be assigned 
with the permutation-subgraph of $G_M$ $$V(E_\pi)=[n],E_\pi=\{(i,\pi(i))\ \forall i\in[n]\}.$$  
\item The upper-right  bipartite subgraph and bold  perfect matching in Figure~\ref{crsp},   
correspond to  the  digraph and its  bold permutation in Figure~\ref{DPSD}

\begin{figure}[h]
\begin{center}\begin{tikzpicture}
[main_node/.style={->,>=stealth',shorten >=1pt,circle,fill=black,minimum size=0.8em,inner sep=1pt,line width=0.5mm},
sec_node/.style={->,>=stealth',shorten >=1pt,circle,fill=white,minimum size=0.8em,inner sep=1pt},
thr_node/.style={->,>=stealth',shorten >=1pt,circle,out=120,in=240,fill=black,minimum size=0.8em,inner sep=1pt,line width=0.5mm},
fr_node/.style={->,>=stealth',shorten >=1pt,circle,out=300,in=60,fill=black,minimum size=0.8em,inner sep=1pt,line width=0.5mm},
ft_node/.style={->,>=stealth',shorten >=1pt,circle,out=30,in=150,fill=black,minimum size=0.8em,inner sep=1pt,line width=0.5mm},
sx_node/.style={->,>=stealth',shorten >=1pt,circle,fill=white,minimum size=0.8em,inner sep=1pt,line width=0.5mm}]

\node[draw=black,circle,white](5) at (2.3,2) {$\color{black}5$};\node[main_node](v5)at(2,2){};
\node[draw=black,circle,white](3) at (2.3,1) {$\color{black}3$};\node[main_node](v3)at(2,1){};
\node[draw=black,circle,white](4) at (6.3,1) {$\color{black}4$};\node[main_node](v4)at(6,1){};
\node[draw=black,circle,white](6) at (-0.3,3) {$\color{black}6$};\node[main_node](v6)at(0,3){};
\node[draw=black,circle,white](1) at (3.7,3) {$\color{black}1$};\node[main_node](v1)at(4,3){};
\node[draw=black,circle,white](2) at (6.3,3) {$\color{black}2$};\node[main_node](v2)at(6,3){};

\draw[main_node]   (v1) edge node{} (v2);   \draw[main_node] (v2) edge node{} (v4);  \draw[main_node]  (v4) edge node{} (v1);
   \draw[thr_node]  (v3) edge node{} (v5);   \draw[fr_node]  (v5) edge node{} (v3);   \draw[sx_node]  (v6) to [out=130,in=50,looseness=8] (v6);  
 \draw[sec_node]  (v6) edge node{} (v5);
\end{tikzpicture}\end{center}\caption{ A digraph and  its permutation subgraph}\label{DPSD}\end{figure}
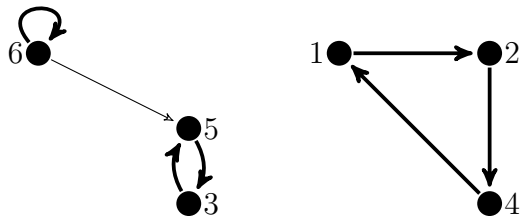

\end{enumerate}\end{exa}

\begin{df}We  say that a 
bipartite graph $H$ is  \textit{perfect} if $$\deg(v)=1\ \forall v\in V_{H,1}\cup V_{H,2}.$$
A disjoint-$k$-bipartite graph  $G=(V,E)$ is {perfect} if $\deg(v)=1$ for all $v\in V(G)$. 
Consequently, a path-$k$-bipartite or star-$k$-bipartite graph is called  {perfect} if it corresponds
 to a perfect disjoint-$k$-bipartite graph.

Formally, 
a~{path-$k$-bipartite graph} $H= (V_1,\dots, V_{k+1}, E)$ is {perfect} if 
 $$\deg^+(v)=\deg^-(u)=\deg^+(u)=\deg^-(w)=1,\ \forall\ v\in V_{H,1},\  u\in \bigcup_{\ell\in[k-2]}V_{H,\ell+1},\  w\in V_{H,k}\ ,$$ 
and a star-$k$-bipartite graph $G=(U_1,\dots, U_k, V, E_G)$ is {perfect} if for every $i\in[k]$
$$d(u)=1,\ \forall u\in U_i.$$
\end{df}

The following proposition is a result of  Lemma~\ref{eqv1}, Hall's Marriage Theorem~\cite{HMT} and Proposition~3.17,~\cite{STMA}.

\begin{pro}\label{eqv2}A $k$-regular digraph (and in particular multigraph)    corresponds to a perfect $k$-bipartite graph,
and its   edge-set is the disjoint union of edge-sets of $k$ permutation-subgraphs. \end{pro}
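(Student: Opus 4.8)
The plan is to prove Proposition~\ref{eqv2} by combining the two structural facts it asserts: first that a $k$-regular digraph corresponds to a perfect $k$-bipartite graph, and second that its edge-set decomposes into $k$ permutation-subgraphs. First I would use Lemma~\ref{eqv1} to pass from the $k$-regular digraph $G=([n],E)$ to its associated bipartite graph $B=(V_{B,1},V_{B,2},E_B)$, where $v_i\in V_{B,1}$ and $u_i\in V_{B,2}$ for each $i\in[n]$ and $(v_i,u_j)\in E_B$ whenever $(i,j)\in E$. Since $\deg^+(i)=\deg^-(i)=k$ for every node $i$ of the digraph, each $v_i\in V_{B,1}$ has degree $k$ (it inherits the out-edges of $i$) and each $u_j\in V_{B,2}$ has degree $k$ (it inherits the in-edges of $j$). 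Thus $B$ is a $k$-regular bipartite graph on $2n$ nodes, which is exactly the perfect $k$-bipartite condition once we view $B$ as the disjoint-$k$-bipartite graph we are about to extract.

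The core of the argument is the edge-decomposition, and here I would invoke Hall's Marriage Theorem~\cite{HMT} as cited. The key step is to show that a $k$-regular bipartite graph $B$ with $|V_{B,1}|=|V_{B,2}|=n$ contains a perfect matching. To verify Hall's condition, I would take any subset $S\subseteq V_{B,1}$ and count edges: the number of edges leaving $S$ is exactly $k|S|$, and since every node of $N(S)\subseteq V_{B,2}$ has degree $k$, these edges are incident to at least $k|S|/k=|S|$ nodes, giving $|N(S)|\ge|S|$. Hence a perfect matching $M_1$ exists. Removing the edges of $M_1$ from $B$ leaves a $(k-1)$-regular bipartite graph, since each node loses exactly one incident edge. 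Iterating this removal $k$ times yields $k$ pairwise edge-disjoint perfect matchings $M_1,\dots,M_k$ whose union is all of $E_B$; the disjointness and the fact that they exhaust the edge-set follow because at each stage the graph remains regular of the next lower degree and the count of edges, $kn$, equals $k$ times the $n$ edges of a perfect matching.

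Finally I would translate back. Each perfect matching $M_r$ in $B$ corresponds, under the correspondence of Lemma~\ref{eqv1}, to a $1$-regular bijection-subdigraph of $G$, i.e.\ a permutation-subgraph realising some $\pi_r\in S_n$, as explained in the passage preceding this proposition where a perfect matching is identified with a bijection $\pi\in S_{I,J}$ satisfying $\deg^+(v)=\deg^-(u)=1$. Since the $M_r$ partition $E_B$ and the correspondence is a bijection on edge-sets, the induced permutation-subgraphs partition $E$ into $k$ disjoint permutation-subgraphs, establishing both claims.

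I expect the main obstacle to be stating the iterative matching-removal cleanly: one must be careful that after deleting a perfect matching the remaining graph is genuinely $(k-1)$-regular and still has balanced parts, so that Hall's condition can be reapplied at each stage, and that multi-edges (which the multigraph setting permits) are handled consistently so that the edge-counts $k|S|$ and $|N(S)|\ge|S|$ remain valid when edges are counted with multiplicity. This is precisely the content encapsulated by the cited Proposition~3.17 of~\cite{STMA}, so in practice the delicate bookkeeping can be deferred to that reference while the translation through Lemma~\ref{eqv1} carries the rest of the argument.
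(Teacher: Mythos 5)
Your proof is correct and takes essentially the same approach as the paper, which gives no written argument but simply asserts that the proposition follows from Lemma~\ref{eqv1}, Hall's Marriage Theorem~\cite{HMT} and Proposition~3.17 of~\cite{STMA}; your expansion—passing to the $k$-regular bipartite graph, verifying Hall's condition by the edge count $k|S|\le k|N(S)|$, and iteratively peeling off perfect matchings—is exactly the standard content of those citations. The multigraph bookkeeping you flag at the end is handled correctly by counting edges with multiplicity, so no gap remains.
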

The correspondence   is demonstrated in Figure~\ref{kreg}.

\begin{figure}[h]\begin{center}\begin{tikzpicture}
[main_node/.style={-,>=stealth',shorten >=1pt,circle,fill=black,minimum size=0.8em,inner sep=1pt,line width=0.3mm},
maina_node/.style={->,>=stealth',shorten >=1pt,circle,out=130,in=50,looseness=8,fill=black,minimum size=0.8em,inner sep=1pt,line width=0.3mm},
mainr_node/.style={-,>=stealth',shorten >=1pt,circle,fill=black,minimum size=0.8em,inner sep=1pt,line width=0.3mm,color=red},
mainar_node/.style={->,>=stealth',shorten >=1pt,circle,fill=black,minimum size=0.8em,inner sep=1pt,line width=0.3mm,color=red},
mainb_node/.style={-,>=stealth',shorten >=1pt,circle,fill=black,minimum size=0.8em,inner sep=1pt,line width=0.3mm,color=blue},
mainabr_node/.style={->,>=stealth',shorten >=1pt,circle,out=30,in=150,fill=black,minimum size=0.8em,inner sep=1pt,line width=0.3mm,color=blue},
mainabl_node/.style={->,>=stealth',shorten >=1pt,circle,out=-130,in=310,fill=black,minimum size=0.8em,inner sep=1pt,line width=0.3mm,color=blue},
mainabloop_node/.style={->,>=stealth',shorten >=1pt,circle,out=310,in=230,looseness=8,fill=black,minimum size=0.8em,inner sep=1pt,line width=0.3mm,color=blue}]

\node[draw=black,circle,white](1) at (0.7,6) {$\color{black}1$};\node[main_node](l1)at(1,6){};
\node[draw=black,circle,white](2) at (2.3,6) {$\color{black}2$};\node[main_node](l2)at(2,6){};
\node[draw=black,circle,white](3) at (1.8,4) {$\color{black}3$};\node[main_node](l3)at(1.5,4){};

\node[draw=black,circle,white](cong1) at (3.5,5) {$\color{black}\cong$};

\node[draw=black,circle,white](11) at (4.7,7) {$\color{black}1$};\node[main_node](m1)at(5,7){};
\node[draw=black,circle,white](12) at (4.7,5) {$\color{black}2$};\node[main_node](m2)at(5,5){};
\node[draw=black,circle,white](13) at (4.7,3) {$\color{black}3$};\node[main_node](m3)at(5,3){};
\node[main_node](m4)at(6,7){};
\node[main_node](m5)at(6,5){};
\node[main_node](m6)at(6,3){};
\node[main_node](m7)at(7,7){};
\node[main_node](m8)at(7,5){};
\node[main_node](m9)at(7,3){};
\node[main_node](m10)at(8,7){};
\node[main_node](m11)at(8,5){};
\node[main_node](m12)at(8,3){};

\node[draw=black,circle,white](cong2) at (9,5) {$\color{black}\cong$};

\node[draw=black,circle,white](21) at (11.5,5.3) {$\color{black}1$};\node[main_node](r1)at(11.5,5){};
\node[draw=black,circle,white](22) at (12.5,5.3) {$\color{black}2$};\node[main_node](r2)at(12.5,5){};
\node[draw=black,circle,white](23) at (13.5,5.3) {$\color{black}3$};\node[main_node](r3)at(13.5,5){};
\node[main_node](r4)at(10,7){};
\node[main_node](r5)at(11,7){};
\node[main_node](r6)at(12,7){};
\node[main_node](r7)at(13,7){};
\node[main_node](r8)at(14,7){};
\node[main_node](r9)at(15,7){};
\node[main_node](r10)at(11.5,3){};
\node[main_node](r11)at(12.5,3){};
\node[main_node](r12)at(13.5,3){};

\draw[mainr_node](r4)edge node{}(r2);
\draw[mainr_node](r5)edge node{}(r3);
\draw[mainr_node](r6)edge node{}(r1);
\draw[mainr_node](m1)edge node{}(m5);
\draw[mainr_node](m2)edge node{}(m6);
\draw[mainr_node](m3)edge node{}(m4);
\draw[mainar_node](l1)edge node{}(l2);
\draw[mainar_node](l2)edge node{}(l3);
\draw[mainar_node](l3)edge node{}(l1);

\draw[mainb_node](r7)edge node{}(r2);
\draw[mainb_node](r9)edge node{}(r3);
\draw[mainb_node](r8)edge node{}(r1);
\draw[mainb_node](m7)edge node{}(m5);
\draw[mainb_node](m9)edge node{}(m6);
\draw[mainb_node](m8)edge node{}(m4);
\draw[mainabr_node](l1)edge node{}(l2);
\draw[mainabl_node](l2)edge node{}(l1);
\draw[mainabloop_node](l3)edge node{}(l3);

\draw[main_node](r12)edge node{}(r3);
\draw[main_node](r11)edge node{}(r2);
\draw[main_node](r10)edge node{}(r1);
\draw[main_node](m7)edge node{}(m10);
\draw[main_node](m9)edge node{}(m12);
\draw[main_node](m8)edge node{}(m11);
\draw[maina_node](l1)edge node{}(l1);
\draw[maina_node](l2)edge node{}(l2);
\draw[maina_node](l3)edge node{}(l3);

\end{tikzpicture}\end{center}\caption{Correspondence between $3$-regular digraph and perfect $3$-bipartites.}\label{kreg}\end{figure}
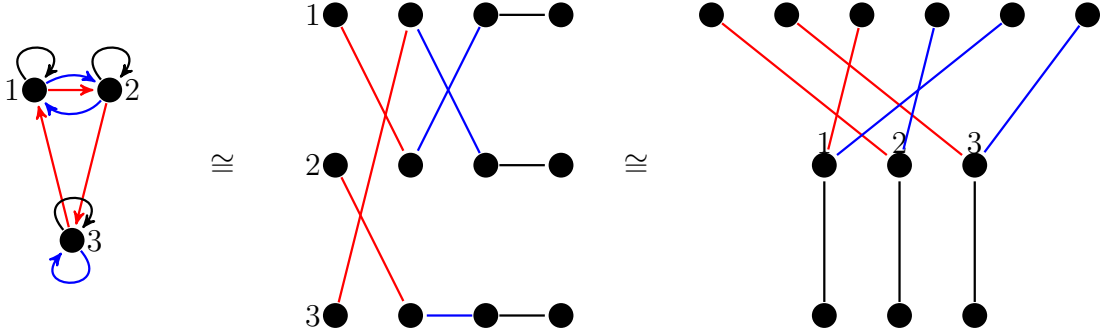

\begin{df}\label{kl} Let $G=([n],E)$ be a~$ {k}$-regular multigraph, and let $$E=\biguplus_{i\in[k]}E_{\rho_i},\ \rho_i\in S_n$$ 
be the disjoint union of edge-sets of $k$ permutation-subgraphs $G_i=([n],E_{\rho_i})$ of $G$.
We say $G$ is~\textit{$(\ell,k)$-regular} with respect to $\{I_j,J_j: |I_j|=|J_j|=k\ \forall  j\in[\ell]\}$ if there exist $e_{i_1},...,e_{i_\ell}\in E_{\rho_i}\ \ \forall i\in[k]$ 
such that$$\big(V(E_{\pi_j}),E_{\pi_j}=\{e_{1_j},..
.,e_{k_j}\}\big),\ j\in[\ell],\ \pi_j\in S_{I_j,J_j}$$ are  $\ell$ disjoint  bijection-subgraphs.   (In particular 
$|E_{\pi_j}|=k,\ \forall j
\in[\ell]$.) We denote $$G=\big([n],\biguplus_{i\in[k]}E_{\rho_i},\{\pi_j\}_{j\in\ell}\big).$$
\end{df}

\begin{exa}\label{asgnmt}\
\begin{enumerate}
\item The digraph in Figure~\ref{kreg} is $(1,3)$-regular with respect to $I=J=[3]$, where $\pi=(1\ 2)$ is constructed from red edge $(1,2)$,
blue edge $(2,1)$ and black loop $(3,3)$, 
as indicated  by green edges in
 Figure~\ref{kreg'}. This is a special case where $n=k$.

\begin{center}\begin{tikzpicture}
[main_node/.style={-,>=stealth',shorten >=1pt,circle,fill=black,minimum size=0.8em,inner sep=1pt,line width=0.5mm},
maina_node/.style={->,>=stealth',shorten >=1pt,circle,out=130,in=50,looseness=8,fill=black,minimum size=0.8em,inner sep=1pt,line width=0.3mm},
mainr_node/.style={-,>=stealth',shorten >=1pt,circle,fill=black,minimum size=0.8em,inner sep=1pt,line width=0.3mm,color=red},
mainar_node/.style={->,>=stealth',shorten >=1pt,circle,fill=black,minimum size=0.8em,inner sep=1pt,line width=0.3mm,color=red},
mainb_node/.style={-,>=stealth',shorten >=1pt,circle,fill=black,minimum size=0.8em,inner sep=1pt,line width=0.3mm,color=blue},
mainabr_node/.style={->,>=stealth',shorten >=1pt,circle,out=30,in=150,fill=black,minimum size=0.8em,inner sep=1pt,line width=0.3mm,color=blue},
mainabl_node/.style={->,>=stealth',shorten >=1pt,circle,out=-130,in=310,fill=black,minimum size=0.8em,inner sep=1pt,line width=0.3mm,color=blue},
mainadl_node/.style={-,>=stealth',shorten >=1pt,circle,out=-30,in=100,fill=black,minimum size=0.8em,inner sep=1pt,line width=0.3mm,color=blue},
mainafl_node/.style={-,>=stealth',shorten >=1pt,circle,out=-40,in=210,fill=black,minimum size=0.8em,inner sep=1pt,line width=0.3mm,color=blue},
mainabloop_node/.style={->,>=stealth',shorten >=1pt,circle,out=310,in=230,looseness=8,fill=black,minimum size=0.8em,inner sep=1pt,line width=0.3mm,color=blue}]

\node[draw=black,circle,white](1) at (0.7,6) {$\color{black}1$};\node[main_node](l1)at(1,6){};
\node[draw=black,circle,white](2) at (2.3,6) {$\color{black}2$};\node[main_node](l2)at(2,6){};
\node[draw=black,circle,white](3) at (1.8,4) {$\color{black}3$};\node[main_node](l3)at(1.5,4){};

\node[draw=black,circle,white](cong1) at (3.5,5) {$\color{black}\cong$};

\node[draw=black,circle,white](11) at (4.7,7) {$\color{black}1$};\node[main_node](m1)at(5,7){};
\node[draw=black,circle,white](12) at (4.7,5) {$\color{black}2$};\node[main_node](m2)at(5,5){};
\node[draw=black,circle,white](13) at (4.7,3) {$\color{black}3$};\node[main_node](m3)at(5,3){};
\node[main_node](m4)at(6,7){};
\node[main_node](m5)at(6,5){};
\node[main_node](m6)at(6,3){};

\draw[mainadl_node,color=green](m1)edge node{}(m5);
\draw[mainr_node](m2)edge node{}(m6);
\draw[mainr_node](m3)edge node{}(m4);
\draw[mainar_node,color=green](l1)edge node{}(l2);
\draw[mainar_node](l2)edge node{}(l3);
\draw[mainar_node](l3)edge node{}(l1);

\draw[mainr_node,color=green](m2)edge node{}(m4);
\draw[main_node,color=green](m3)edge node{}(m6);

\draw[main_node,color=black](m1)edge node{}(m4);
\draw[main_node,color=black](m2)edge node{}(m5);

\draw[mainr_node,color=blue](m1)edge node{}(m5);
\draw[mainafl_node,color=blue](m3)edge node{}(m6);

\draw[mainabr_node](l1)edge node{}(l2);
\draw[mainabl_node,color=green](l2)edge node{}(l1);
\draw[mainabloop_node](l3)edge node{}(l3);

\draw[maina_node](l1)edge node{}(l1);
\draw[maina_node](l2)edge node{}(l2);
\draw[maina_node,color=green](l3)edge node{}(l3);

\node[draw=black,circle,white](cong2) at (7,5) {$\color{black}\cong$};

\node[draw=black,circle,white](1) at (7.7,7) {$\color{black}1$};\node[main_node](l1)at(1,6){};
\node[draw=black,circle,white](2) at (7.7,5) {$\color{black}2$};\node[main_node](l2)at(2,6){};
\node[draw=black,circle,white](3) at (7.7,3) {$\color{black}3$};\node[main_node](l3)at(1.5,4){};

\node[main_node](m10)at(8,7){};
\node[main_node](m11)at(8,5){};
\node[main_node](m12)at(8,3){};
\node[main_node](m13)at(9,7){};
\node[main_node](m14)at(9,5){};
\node[main_node](m15)at(9,3){};

\node[draw=black,circle,white](1) at (9.7,7) {$\color{black}1$};\node[main_node](l1)at(1,6){};
\node[draw=black,circle,white](2) at (9.7,5) {$\color{black}2$};\node[main_node](l2)at(2,6){};
\node[draw=black,circle,white](3) at (9.7,3) {$\color{black}3$};\node[main_node](l3)at(1.5,4){};

\node[main_node](m16)at(10,7){};
\node[main_node](m17)at(10,5){};
\node[main_node](m18)at(10,3){};
\node[main_node](m19)at(11,7){};
\node[main_node](m20)at(11,5){};
\node[main_node](m21)at(11,3){};

\node[draw=black,circle,white](1) at (11.7,7) {$\color{black}1$};\node[main_node](l1)at(1,6){};
\node[draw=black,circle,white](2) at (11.7,5) {$\color{black}2$};\node[main_node](l2)at(2,6){};
\node[draw=black,circle,white](3) at (11.7,3) {$\color{black}3$};\node[main_node](l3)at(1.5,4){};

\node[main_node](m22)at(12,7){};
\node[main_node](m23)at(12,5){};
\node[main_node](m24)at(12,3){};
\node[main_node](m7)at(13,7){};
\node[main_node](m8)at(13,5){};
\node[main_node](m9)at(13,3){};

\draw[mainr_node,color=green](m10)edge node{}(m14);
\draw[mainr_node,color=green](m17)edge node{}(m19);
\draw[main_node,color=green](m24)edge node{}(m9);

\draw[mainr_node,color=red](m11)edge node{}(m15);
\draw[mainr_node,color=red](m12)edge node{}(m13);

\draw[mainr_node,color=blue](m16)edge node{}(m20);
\draw[main_node,color=blue](m18)edge node{}(m21);

\draw[main_node,color=black](m22)edge node{}(m7);
\draw[main_node,color=black](m23)edge node{}(m8);

\end{tikzpicture}\end{center}


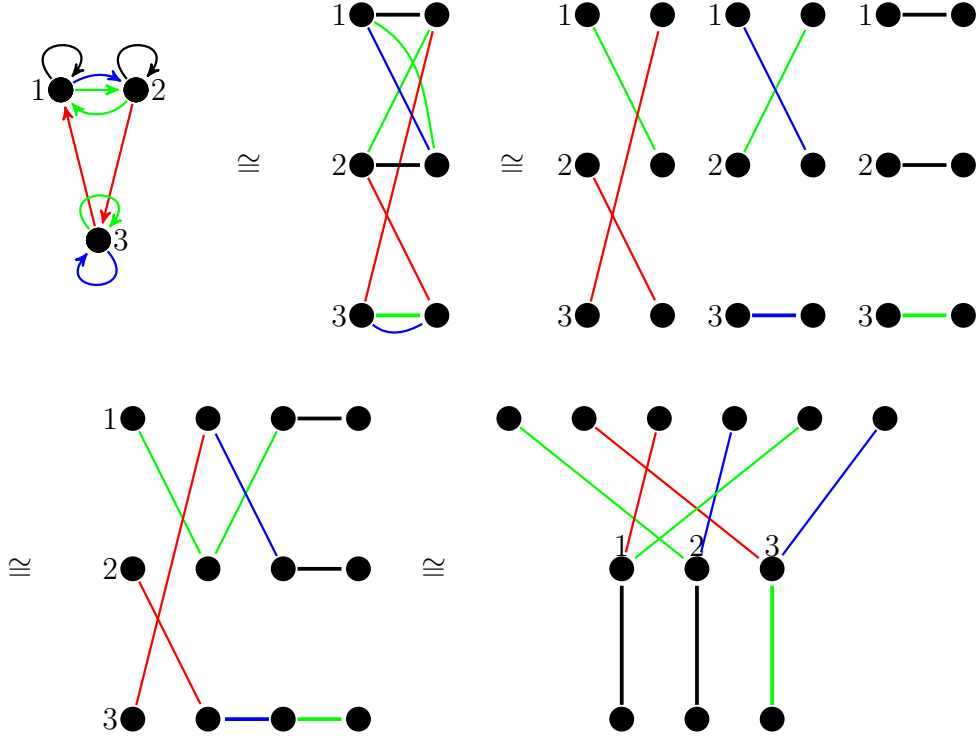
\begin{figure}[h]\begin{center}\begin{tikzpicture}
[main_node/.style={-,>=stealth',shorten >=1pt,circle,fill=black,minimum size=0.8em,inner sep=1pt,line width=0.5mm},
maina_node/.style={->,>=stealth',shorten >=1pt,circle,out=130,in=50,looseness=8,fill=black,minimum size=0.8em,inner sep=1pt,line width=0.3mm},
mainr_node/.style={-,>=stealth',shorten >=1pt,circle,fill=black,minimum size=0.8em,inner sep=1pt,line width=0.3mm,color=red},
mainar_node/.style={->,>=stealth',shorten >=1pt,circle,fill=black,minimum size=0.8em,inner sep=1pt,line width=0.3mm,color=red},
mainb_node/.style={-,>=stealth',shorten >=1pt,circle,fill=black,minimum size=0.8em,inner sep=1pt,line width=0.3mm,color=blue},
mainabr_node/.style={->,>=stealth',shorten >=1pt,circle,out=30,in=150,fill=black,minimum size=0.8em,inner sep=1pt,line width=0.3mm,color=blue},
mainabl_node/.style={->,>=stealth',shorten >=1pt,circle,out=-130,in=310,fill=black,minimum size=0.8em,inner sep=1pt,line width=0.3mm,color=blue},
mainabloop_node/.style={->,>=stealth',shorten >=1pt,circle,out=310,in=230,looseness=8,fill=black,minimum size=0.8em,inner sep=1pt,line width=0.3mm,color=blue}]

\node[draw=black,circle,white](cong1) at (3.5,5) {$\color{black}\cong$};

\node[draw=black,circle,white](11) at (4.7,7) {$\color{black}1$};\node[main_node](m1)at(5,7){};
\node[draw=black,circle,white](12) at (4.7,5) {$\color{black}2$};\node[main_node](m2)at(5,5){};
\node[draw=black,circle,white](13) at (4.7,3) {$\color{black}3$};\node[main_node](m3)at(5,3){};
\node[main_node](m4)at(6,7){};
\node[main_node](m5)at(6,5){};
\node[main_node](m6)at(6,3){};
\node[main_node](m7)at(7,7){};
\node[main_node](m8)at(7,5){};
\node[main_node](m9)at(7,3){};
\node[main_node](m10)at(8,7){};
\node[main_node](m11)at(8,5){};
\node[main_node](m12)at(8,3){};

\node[draw=black,circle,white](cong2) at (9,5) {$\color{black}\cong$};

\node[draw=black,circle,white](21) at (11.5,5.3) {$\color{black}1$};\node[main_node](r1)at(11.5,5){};
\node[draw=black,circle,white](22) at (12.5,5.3) {$\color{black}2$};\node[main_node](r2)at(12.5,5){};
\node[draw=black,circle,white](23) at (13.5,5.3) {$\color{black}3$};\node[main_node](r3)at(13.5,5){};
\node[main_node](r4)at(10,7){};
\node[main_node](r5)at(11,7){};
\node[main_node](r6)at(12,7){};
\node[main_node](r7)at(13,7){};
\node[main_node](r8)at(14,7){};
\node[main_node](r9)at(15,7){};
\node[main_node](r10)at(11.5,3){};
\node[main_node](r11)at(12.5,3){};
\node[main_node](r12)at(13.5,3){};

\draw[mainr_node,color=green](r4)edge node{}(r2);
\draw[mainr_node](r5)edge node{}(r3);
\draw[mainr_node](r6)edge node{}(r1);
\draw[mainr_node,color=green](m1)edge node{}(m5);
\draw[mainr_node](m2)edge node{}(m6);
\draw[mainr_node](m3)edge node{}(m4);

\draw[mainb_node](r7)edge node{}(r2);
\draw[mainb_node](r9)edge node{}(r3);
\draw[mainb_node,color=green](r8)edge node{}(r1);
\draw[mainb_node,color=green](m7)edge node{}(m5);
\draw[main_node,color=blue](m9)edge node{}(m6);
\draw[mainb_node](m8)edge node{}(m4);

\draw[main_node,color=green](r12)edge node{}(r3);
\draw[main_node](r11)edge node{}(r2);
\draw[main_node](r10)edge node{}(r1);
\draw[main_node](m7)edge node{}(m10);
\draw[main_node,color=green](m9)edge node{}(m12);
\draw[main_node](m8)edge node{}(m11);

\end{tikzpicture}\end{center}\caption{$(1,3)$-regular.}\label{kreg'}\end{figure}

\item If~$G=\big([n],\biguplus_{i\in[k]}E_{\rho}\big)$ for some $ \rho\in S_n$,
then for every~$I\subseteq[n]:\ |I|=k$,  the digraph~$G$ 
is~$(k,k)$-regular with respect to~$I=I_j,J=J_j,\ \forall j\in[k]$,  
where$$\pi_j=\pi=\rho|_I\in S_{I,J},\ \forall j\in[k].$$

It is also $(n-k+1,k)$-regular with respect to 
$$I_j=\{t+j-1:\ t\in[k]\}\ ,\ J_j=\{\rho (t+j-1):\ t\in[k]\},\ \ j\in[n-k+1],$$ 
where $$E_{\pi_{j+1}}=\big\{(j+k,\rho(j+k))\big\}\biguplus  E_{\pi_j}\setminus\big\{(j,\rho(j))\big\}.$$ 
\item A filled Sudoku table represents an $(\ell,k)$-regular digraph, with respect to $I=J=[9]$, where $n=k=\ell=9$. 
\end{enumerate}\end{exa}

The rough idea behind Example~\ref{asgnmt} part~(1) is to present a set of assignments of people to jobs, using permutations over multi-digraphs.  
Observing Figure~\ref{kreg'}, this can be applied  in different equivalent ways. For instance, assigning different sets of jobs to the same team, 
or the same set of jobs to different teams, as described by the lower-right star-$3$-bipartite graph. One may assign a team 
of experienced workers to tutor new workers, which 6 months later will tutor newer workers, and so on, 
 as described by the lower-left path-$3$-bipartite graph. In this paper, we consider assigning the same set of jobs, 
to the same team over a given  time interval, as described in the middle-upper  multi-digraph, 
or assigning different jobs to different teams, as described in the right-upper   disjoint-$3$-bipartite graph.

Additionally, we  want to be able to consider assignments    performed under some restrictions. 
That is, for a single assignment relating to $G_A$, one edge is fixed, and we consider all 
 perfect matchings including this edge.

\begin{df} Let~$A\in\R_{\max}^{n\times m}$. We denote by  $A^{\wedge k}\in\R_{\max}^{\left(\substack{n\\k}\right)\times \left(\substack{m\\k}\right)} $  the max-plus \textit{$k^{th}$ 
compound matrix of $A$} defined by 
$$A^{\wedge k}_{I,J}=\max_{\substack{\sigma:I\rightarrow J\\\text{bijection}}}
\sum_{i\in I}A_{i,\sigma(i)}\ \ \forall I\subseteq[n]\text{ and }J\subseteq[m]:\ |I|=|J|=k.$$ 
In particular, $A^{\wedge 1}=A,\ A^{\wedge 0}=\unit$ and  $\per(A)=A^{\wedge n} $ is the {max-plus permanent} 
of~$A$ when $n=m$. In this case we denote by $\adj(A)_{i,j}=A^{\wedge n-1}_{\{j\}^c,\{i\}^c}$   the $(i\ j)$ entry of the  \textit{max-plus adjoint} of $A$.\end{df}

For a non-$\zero$ product of  concatenating entries of $M\in \R^{n\times n}$ 
$$P=M_{j_1,j_2}\odot M_{j_2,j_3}\odot\cdots\odot M_{j_{k},j_{k+1}},$$
the edge-set of the digraph $G=\big(V(E_P),E_P\big)$ corresponds to a {path} $$p=e_1\cdots e_k:\ s(e_i)=j_i,t(e_i)=j_{i+1}\ \forall i\in[k]$$ of length $l(p)=k$,  
from $s(p)=s(e_1)=j_1\in V(E_P)$ to $t(p)=t(e_k)=j_{k+1}\in V(E_P)$, 
and with  weight $w(p)=\bigodot_{i\in[k]}w(e_i,G).$ When it is clear which matrix we are using we may  just write $w(p)$.

We decompose   a bijection $\rho\in S_{I,J}$ (and in particular, when $I=J$, a permutation)
 into disjoint cycles whose set is denoted by $\mathcal{C}$, and elementary paths whose set is denoted by $\mathcal{P}$. This corresponds to the restrictions   $\rho|_{\overline{s(p)}},\ s(p)\in I$, where $$\overline{s(p)}=\{j\in I:\text{ s.t.~}
\rho^m(s(p))=j\text{ for some }m\in\mathbb{N}\}.$$  The quotient set  $$\big\{\overline{s(p)}:p\text{ is a cycle or an elementary path of }\rho\big\}=$$
$$\underbrace{\big\{ \overline{s(p)}:\rho^m(s(p))\in I,\forall m\in\mathbb{N}\big\}}_{\text{corresponds to }\mathcal{C}}
\cup
\underbrace{\big\{ \overline{s(p)}:\exists m_p\in\mathbb{N}\text{ s.t.~}\rho^{m_p}(s(p))\not\in I\big\}}_{\text{corresponds to }\mathcal{P}}$$ 
is the partition of $I$ corresponding to the  source-nodes of the disjoint cycles and paths of $\rho$.
Moreover, a  bijection-subgraph $G'=\big(V(E_\rho),E_\rho\big)\subseteq G=([n],E)$ satisfies 
$$E_\rho=\{e\in E:\ t(e)=\rho(s(e))\ \forall s(e)\in I\}\ ,\ \deg^+(i)=\deg^-(\rho(i))=1\ \forall \rho(i)\in J,i\in I,$$ and 
$$w(\rho)=\bigodot_{\overline{s(p)}}\bigodot_{s(e)\in\overline{s(p)}}w(e)=
\bigodot_{\overline{s(p)}\in \mathcal{C}}w(p)\bigodot_{\overline{s(p)}\in \mathcal{P}}w(p),\text{ where }\rho^{m_p}(s(p))=\begin{cases}s(p),\overline{s(p)}\in \mathcal{C}\\
t(p),\overline{s(p)}\in \mathcal{P}\enspace .\end{cases}$$
(That is, over $\rmax$, the weight of a permutation is the sum of weights of its disjoint cycles and paths, which is the sum 
of weights of its edges).

In particular, every elementary path (resp.~cycle) is  a bijection (resp.~permutation) from its set of
source-nodes to its set of target-nodes, and can be extended to a bijection $J^c\cup I\longrightarrow 
I^c\cup J$ (resp.~permutation in $S_n$) by 
composing it with the loops $$e:\ s(e)=t(e)=j,\ \forall j\in I^c\cap J^c\text{ (resp.~}j\in I^c).$$
Every bijection $\rho\in S_{I,J}$ can be extended to a permutation $\pi\in S_{I\cup J}$ by defining $$\pi(j)=\begin{cases}\rho(j)&,\  j\in I\\s(p)&,\  j=t(p)\in J\setminus I\end{cases}.$$
In the same way, every permutation~$\pi\in S_{I}=S_{I,I}$  can be reduced to a   
bijection $\rho\in S_{J,K},$ $J,K\subseteq I:\ K=\{\pi(j):j\in J\}$  by 
defining~$\rho=\pi|_J$. 

We note that a path $p$ from $s(p)$ to $t(p)$ can be decomposed  into (not necessarily disjoint) cycles 
and an elementary path from $s(p)$ to $t(p)$.

\begin{df}\label{optimal} Let  $I,J\subseteq [n]$ such that 
$|I|=|J|=k$. We say that $\pi\in S_n$ is 
an \textit{optimal permutation}   in a simple digraph $G=([n],E)$ weighted over $\R_{\max}$, if $w(\pi)\geq w(\tau)\ \forall \tau\in S_n$, or equivalently $$\per(M_G)=\sum_{i\in[n]}(M_G)_{i,\pi(i)}.$$  
We say that $\sigma\in S_{I,J}$ is  an \textit{optimal bijection with respect to $I,J$}  in $G$   
if $w(\sigma)\geq w(\rho)\ \forall \rho\in S_{I,J}$, or equivalently 
$$(M_G)^{\wedge k}_{_{I,J}}=
\sum_{i\in I}(M_G)_{i,\sigma(i)}.$$
We say that~$\big([n],\biguplus_{i\in[k]}E_{\rho_i},\sigma\big)$ is an \textit{optimal $(1,k)$-regular multigraph} of~$G$ 
 with respect to $I,J$
if 
$$\bigg(\sum_{i\in[k]}w(\rho_i)\bigg)-w( \sigma)\geq
\bigg(\sum_{i\in[k]}w(\rho'_i)\bigg)-w( \sigma'),$$ for every 
$(1,k)$-regular multigraph  $\big([n],\biguplus_{i\in[k]}E_{\rho'_i},\sigma'\big)$ of~$G$  with respect to $I,J$. Equivalently
$$\big(\adj(M_G)\big)^{\wedge k}_{_{J,I}}=
\sum_{i\in I}\big(\adj(M_G)\big)_{\sigma(i),i}\ ,\text{ where }\big(\adj(M_G)\big)_{\sigma(i),i}=\sum_{j\in \{i\}^c}(M_G)_{j,\rho_i(j)}.$$
\end{df}

Note that, the rough idea here is to represent a set of assignments of people to jobs.  In the assignment problem for example, we have a matrix $A$ associated with a digraph $G_A=([n],E_G)$ on which we find an optimal permutation $\pi$, which can equivalently be viewed as a perfect matcing in a bipartite graph $B=([n],[n],E_B)$ corresponds 
to~$G_A$.  We want to extend this to the problem of multiple assignments.  Trivially $k$ assignments can be represented as $M_1+M_2+\dots+ M_k$ where each $M_r,\ r\in[k]$ is a perfect matching.  These can also be viewed as a perfect $k$-bipartite graph, or specific versions of a perfect star/path/disjoint-$k$-bipartite graph. 

More importantly, we  want to be able to consider assignments    performed under some restrictions. 
That is, for a single assignment relating to $G_A$, one edge is fixed (in some to-be-defined way), and we consider all 
 perfect matchings including this edge.


\section{Combinatorial interpretation of the compound matrix of the tropical adjoint}

We describe a new form of the assignment problem whose solutions are given by entries of the compound matrix of the tropical adjoint.

Given is a set of $n$ workers and $n$ assignments.  The entries of a matrix $M\in\rmax^{n\times n}$ represent the \emph{value} of person $i$ performing assignment $j$.  This value could, for example, be the negative cost of the assignment, the negative time taken, the persons experience level of performing the job, or some function of multiple factors.

The usual assignment problem finds an assignment of people to jobs which maximises the value.  The assignment problem can be solved in $\mathcal{O}(n^3)$ \cite{Munkres57} time by the Hungarian Algorithm \cite{Kuhn55} which transforms to a non-positive matrix $B=D_1\otimes M\otimes D_2$, $D_1, D_2$ diagonal.  This transformation preserves the set of permutations of maximum weight and $B$ additionally has the property that every permutation $\pi$ of maximum weight satisfies $B_{i,\pi(i)}=0, \forall i$.

Suppose that, within a workplace, the same $n$ jobs have to be performed each day, and one person is required to perform each task (possibly using specialist equipment).  Additionally, a supervisor wishes to observe, or train, a subset of his employees on various tasks/pieces of equipment throughout the week.  On each day he will observe one worker and one task.

Then, a set of $k$ assignments of workers to tasks is required which additionally has the property that, in each assignment, a different worker $i\in I$ performs a different task $j\in J$.

\begin{df}We define \emph{$k$ assignments with supervisions $I$ on $J$} to be permutations $\pi_1,\dots,\pi_k\in S_n$ with a supervision $\tau:I\rightarrow J$ with $\tau(i_t):=j_t, \forall t\in[k]$ with the property that $\pi_t(i_t)=j_t, \forall t\in[k].$  \end{df}

\begin{rem} Obviously, for a week of assignments where there is one supervision per day we set $k=5$.  We can generalise from \emph{weekly} to daily/hourly and so on by changing the value of $k$.
\end{rem}

\begin{exa}  Let $I=\{1,3,6\}$ and $J=\{2,3,5\}$.  Shown in Figure \ref{Fig:AssignmentswithSupervisions} is a set of three assignments $\pi_1,\dots, \pi_3\in S_6$ with the property that, distributed over the assignments is a permutation $\tau\in S_3$ representing the supervisions.

\begin{figure}[h]\begin {tikzpicture}[node distance =1cm and 2cm ,on grid ,semithick, Vertex/.style ={circle, draw, inner sep=0pt, minimum size=0.5cm}, Vertex*/.style ={rectangle, draw, fill=red!20, inner sep=0pt, minimum size=0.5cm}]

\node[Vertex*] (A1){1};\node[Vertex] (A2) [below =of A1] {2};\node[Vertex*] (A3) [below =of A2] {3};\node[Vertex] (A4)[below =of A3] {4};\node[Vertex] (A5) [below =of A4] {5};\node[Vertex*] (A6) [below =of A5] {6};

\node[Vertex] (B1)[right =of A1] {1};\node[Vertex*] (B2) [below =of B1] {2};\node[Vertex*] (B3) [below =of B2] {3};\node[Vertex] (B4)[below =of B3] {4};\node[Vertex*] (B5) [below =of B4] {5};\node[Vertex] (B6) [below =of B5] {6};

\node[Vertex*] (C1)[right =2cm of B1] {1};\node[Vertex] (C2) [below =of C1] {2};\node[Vertex*] (C3) [below =of C2] {3};\node[Vertex] (C4)[below =of C3] {4};\node[Vertex] (C5) [below =of C4] {5};\node[Vertex*] (C6) [below =of C5] {6};

\node[Vertex] (D1)[right =of C1] {1};\node[Vertex*] (D2) [below =of D1] {2};\node[Vertex*] (D3) [below =of D2] {3};\node[Vertex] (D4)[below =of D3] {4};\node[Vertex*] (D5) [below =of D4] {5};\node[Vertex] (D6) [below =of D5] {6};

\node[Vertex*] (E1)[right =2cm of D1] {1};\node[Vertex] (E2) [below =of E1] {2};\node[Vertex*] (E3) [below =of E2] {3};\node[Vertex] (E4)[below =of E3] {4};\node[Vertex] (E5) [below =of E4] {5};\node[Vertex*] (E6) [below =of E5] {6};

\node[Vertex] (F1)[right =of E1] {1};\node[Vertex*] (F2) [below =of F1] {2};\node[Vertex*] (F3) [below =of F2] {3};\node[Vertex] (F4)[below =of F3] {4};\node[Vertex*] (F5) [below =of F4] {5};\node[Vertex] (F6) [below =of F5] {6};

\draw[thick] (A1) to (B3);\draw[thick] (A2) to (B1);
\draw[thick] (A4) to (B4);\draw[thick] (A5) to (B6);\draw[thick] (A6) to (B2);
\draw[thick] (C2) to (D2);\draw[thick] (C3) to (D4);
\draw[thick] (C4) to (D1);\draw[thick] (C5) to (D6);\draw[thick] (C6) to (D5);
\draw[thick] (E2) to (F1);\draw[thick] (E3) to (F3);
\draw[thick] (E4) to (F5);\draw[thick] (E5) to (F6);\draw[thick] (E1) to (F4);

\draw[very thick, dotted, color=red] (A3) to (B5);\draw[very thick, dotted, color=red] (C1) to (D3);\draw[very thick, dotted, color=red] (E6) to (F2);
\end{tikzpicture}\caption{3 Assignments with supervisions $\{1,3,6\}$ on $\{2,3,5 \}$.}\label{Fig:AssignmentswithSupervisions}\end{figure}
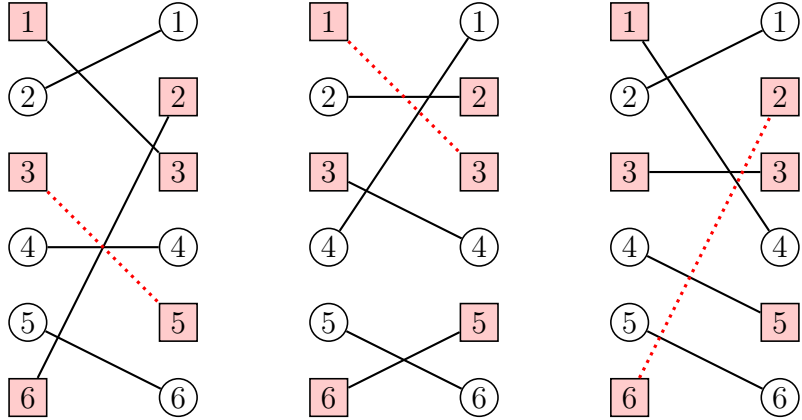
\end{exa}

Given that the supervisor has deemed it necessary that each worker $i\in I$ performs one of the tasks $j\in J$, it is accepted that the full assignment of people to jobs may not be optimal under this condition (for example if the supervisor wishes to train a worker $i$ for job $j$, the worker won't have experience of this job, and the value $M_{i,j}$ will be low).  Thus, we aim to optimise the value of all other assignments throughout the week, ignoring the value of the supervisions $I$ on $J$.  This leads to the following definition.

\begin{df} Let $M\in\rmax^{n\times n}$. The \emph{base value} of $k$ assignments $\rho_t$ for $t\in[k]$ with 
supervisions $(i_t,j_t)\in I\times J$ defined by a bijection $\sigma\colon I\to J$ 
is the weight of the $(1,k)$-regular multigraph $\big([n],\biguplus_{t\in[k]}E_{\rho_t},\sigma\big)$  
$$\sum_{t\in[k]} \left(w(\rho_t, M)-M_{i_t,\sigma(i_t)}\right)=\sum_{t=1}^k \sum_{i\neq i_t}M_{i,\rho_t(i)}.$$
\end{df}

\begin{exa} The base value for the 3 assignments with supervisions shown in Figure~\ref{Fig:AssignmentswithSupervisions} is \begin{align*}&(M_{1,3}+M_{2,1}+M_{4,4}+M_{5,6}+M_{6,2})\\+&(M_{2,2}+M_{3,4}+M_{4,1}+M_{5,6}+M_{6,5})\\+&(M_{1,4}+M_{2,1}+M_{3,3}+M_{4,5}+M_{5,6}).\end{align*}
\end{exa}

\begin{obs} The optimal (with respect to $M$) base value of $k$ assignments with supervisions $I$ on $J$ is 
equal to the weight of optimal $(1,k)$-regular multigraph with respect to $I$ and $J$, which is
$$\bigoplus_{\sigma\in S_{J,I}}w(\sigma, \adj(M)_{J,I})=[\adj(M)]^{\wedge k}_{J,I}.$$
\end{obs}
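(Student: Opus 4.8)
The plan is to compute the optimum in two stages: first fix the supervision bijection $\sigma\colon I\to J$ and optimise the individual assignments $\rho_t$, then optimise over $\sigma$ itself. I would begin by recording the identity between the base value and the weight of a $(1,k)$-regular multigraph. For any choice of assignments $\rho_1,\dots,\rho_k$ and supervision $\sigma$ with $\sigma(i_t)=j_t$, the weight of the multigraph $\big([n],\biguplus_{t\in[k]}E_{\rho_t},\sigma\big)$ is $\big(\sum_{t\in[k]}w(\rho_t)\big)-w(\sigma)$ by Definition~\ref{optimal}; since $w(\sigma)=\sum_{t\in[k]}M_{i_t,\sigma(i_t)}$ this equals $\sum_{t\in[k]}\big(w(\rho_t,M)-M_{i_t,\sigma(i_t)}\big)$, which is exactly the base value. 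Hence the optimal base value coincides, choice by choice, with the weight of an optimal $(1,k)$-regular multigraph, giving the first asserted equality.

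For the second stage I would fix $\sigma$ and observe that each $\rho_t$ is constrained only by $\rho_t(i_t)=\sigma(i_t)$ and enters the sum independently of the others, so the maximum splits as $\sum_{t\in[k]}\max_{\rho_t(i_t)=\sigma(i_t)}\big(w(\rho_t)-M_{i_t,\sigma(i_t)}\big)$. By the very definition of the tropical adjoint, each inner maximum is $\adj(M)_{\sigma(i_t),i_t}=\per\big(M_{\{i_t\}^c,\{\sigma(i_t)\}^c}\big)$, so the best base value achievable under a given supervision $\sigma$ is $\sum_{i\in I}\adj(M)_{\sigma(i),i}$.

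Finally I would optimise over $\sigma$ and perform the index change $\tau=\sigma^{-1}\colon J\to I$: substituting $j=\sigma(i)$ turns $\sum_{i\in I}\adj(M)_{\sigma(i),i}$ into $\sum_{j\in J}\adj(M)_{j,\tau(j)}=w(\tau,\adj(M)_{J,I})$, and as $\sigma$ ranges over all bijections $I\to J$ its inverse $\tau$ ranges over all of $S_{J,I}$. Thus the optimal base value equals $\bigoplus_{\tau\in S_{J,I}}w(\tau,\adj(M)_{J,I})$, which is $[\adj(M)]^{\wedge k}_{J,I}$ by the definition of the compound matrix applied to $\adj(M)$; this also recovers the equivalent characterisation recorded in Definition~\ref{optimal}. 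I expect the only point requiring genuine care to be the bookkeeping in the last two steps: justifying that the $\rho_t$ may be optimised independently (the supervision $\sigma$ couples them solely through the fixed edges $(i_t,\sigma(i_t))$, whose contribution is subtracted off), and verifying that the passage between the supervision $\sigma\colon I\to J$ and the compound-matrix bijection $\tau\colon J\to I$ is exactly inversion, so that the rows and columns of $\adj(M)$ are matched to $J$ and $I$ in the correct order.
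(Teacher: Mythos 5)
Your proof is correct. The paper gives no proof at all---it records this statement as an Observation that follows directly from the definition of base value together with the ``equivalently'' clause built into Definition~\ref{optimal}---and your argument (identify the base value with the multigraph weight, optimize each $\rho_t$ independently under the single constraint $\rho_t(i_t)=\sigma(i_t)$ to get $\adj(M)_{\sigma(i_t),i_t}$, then optimize over $\sigma$ via the inversion $\tau=\sigma^{-1}\in S_{J,I}$) is exactly the unpacking of those definitions, i.e.\ essentially the paper's intended reasoning.
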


\begin{nota}
Solution of an optimization problem is in general non-unique. Below we will indicate multiple optimal solutions by the superscript $\bullet$, inspired  
by~\cite{BCOQ92,G.Ths}. 
\end{nota}

\begin{nota}
For a bijection $\beta: I\rightarrow J$ we will also use the notation $$\beta=(i_1, j_1), (i_2, j_2),\dots, (i_k, j_k)$$ where $\beta(i_t)=j_t$ for $t\in[k]$.
\end{nota}

\begin{exa}\label{Cexa} Let $$M=\begin{pmatrix}0&1&-2&-4\\-3&0&5&2\\-5&4&0&6\\-1&-6&3&0
\end{pmatrix},\text{ then }\adj(M)=\begin{pmatrix}9&10&6^{\bullet}&12\\ 10&9&5^{\bullet}&11\\ 5&6&2&6^{\bullet}\\ 8&9&5&9 \end{pmatrix}.$$
Suppose we want 2 assignments of people to jobs with maximum base value such that we additionally supervise $I=\{2,4\}$ on $J=\{1,2\}$.

The maximum base value is given by $$\adj(M)^{\wedge 2}_{J, I}=\per\begin{pmatrix}10&12\\9&11\end{pmatrix}=21^{\bullet}.$$  
Here, after accounting for the transposition in $\adj(M)$, we get that the bijections/su-pervisions attaining this maximum value are $\sigma_1=(2,1)(4,2)$ (corresponding to the identity permutation in $\adj(M)_{J,I}$) and $\sigma_2=(2,2)(4,1)$ (corresponding to permutation $(12)(21)$ in $\adj(M)_{J,I}$).\end{exa}

Of course, we could finish here and report both of these assignments with supervisions as solutions, returning $S=\{\sigma_1, \sigma_2\}$ as the solution set.  But it may be that, given a subset $S$ of $k$ assignments with supervisions $I$ on $J$ which all have the same base value, it is desired that we return the 'best' of these under some criteria.
Obviously we could consider which of the supervisions in $S$ have the optimal value in $$M[I, J]=\begin{pmatrix}-3&0\\-1&-6\end{pmatrix}.$$  In this case the optimum is clearly given by $\sigma_2$.

However, it could be that this is not the best criteria to differentiate between supervisions.  Consider, for example, the following:  a low value of $M_{i,j}$ could indicate that worker $i$ has not been trained for job $j$, in which case supervising/training $i$ on $j$ could be beneficial to the company as it would increase that workers skill set and usefulness in the future.
In light of this, we consider a matrix $C$ reflecting the value of supervising $i\in I$ on task $j\in J$.  

\begin{df} 
\label{def:C}
We define $C\in\rmax^{k\times k}$ to be the \textit{priority matrix}, with rows $i\in I$ and columns $j\in J$, where $C_{i,j}$ is the priority the supervisor sets for supervising person $i\in I$ on task $j\in J$.   
Denoting by $E_{J,I}^{ap}$ the set of edges belonging to the optimal bijections 
in $\adj(M)_{J,I}$, we will assume that  
\begin{equation}
\label{e:essential-edges}
C_{i,j}\neq -\infty\Rightarrow (j,i)\in E_{J,I}^{ap},
\end{equation}
and that at least one bijection from $I$ to $J$ has a finite weight with respect to $C$.
\if{
C'_{i,j}=
\begin{cases}
C_{i,j}, &\text{if $(j,i)\in E_{JI}^{ap}$},\\
-\infty, &\text{otherwise.}
\end{cases}
\end{equation}  
}\fi
\end{df}

Note that by~\eqref{e:essential-edges},
finite priorities are assigned only for the edges in $E_{J,I}^{ap}$. Thus, assumption~\eqref{e:essential-edges} allows the supervisors to assign only essential priorities and it 
can decrease their costs of assigning priorities from $O(k^2)$ to $O(k)$ if there is only a small number 
of optimal permutations in $\ap(\adj(M)_{J,I})$. Let us consider the computational cost of 
identifying $E_{J,I}^{ap}$, which is necessary for setting the priorities matrix $C$.

\begin{pro}
Given $A\in\rmax^{n\times n}$ and $I,J\subseteq [n]$ with $|I|=|J|=k$, one needs no more than
$O(k^2n^3)$ operations to identify all edges of $E_{J,I}^{ap}$.
\end{pro}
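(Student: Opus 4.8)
The plan is to split the computation into two stages: first assemble the $k\times k$ matrix $B:=\adj(A)_{J,I}$, and then, working entirely inside $B$, decide which of its $k^2$ entries lie on an optimal bijection. I expect the first stage to dominate and to account by itself for the stated bound.

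For the first stage I would compute all $k^2$ entries of $B$ directly from the definition. Each entry is
$$B_{j,i}=\adj(A)_{j,i}=\per\!\big(A_{\{i\}^c,\{j\}^c}\big),$$
that is, the optimal value of an assignment problem on the $(n-1)\times(n-1)$ matrix $A_{\{i\}^c,\{j\}^c}$, which the Hungarian algorithm solves in $O(n^3)$. As there are $|J|\cdot|I|=k^2$ such entries, this stage costs $O(k^2n^3)$.

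For the second stage I would use the following characterisation of $E_{J,I}^{ap}$: an edge $(j,i)$ with $j\in J$, $i\in I$ lies on an optimal bijection of $B$ if and only if forcing it leaves the optimum unchanged, namely
$$B_{j,i}+\per\!\big(B[J\setminus\{j\},I\setminus\{i\}]\big)=\per(B).$$
The equivalence holds because every bijection through $(j,i)$ decomposes into that edge together with a bijection of the complementary rows and columns, so the heaviest such bijection has weight $B_{j,i}+\per(B[J\setminus\{j\},I\setminus\{i\}])$, which meets $\per(B)$ exactly when some optimal bijection uses $(j,i)$. Computing $\per(B)$ once costs $O(k^3)$, and each of the $k^2$ edge tests costs one reduced permanent, i.e.\ $O(k^3)$, for a total of $O(k^5)$. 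Since $k\le n$ we have $O(k^5)=O(k^2\cdot k^3)\le O(k^2n^3)$, so this stage is absorbed into the first, giving the total $O(k^2n^3)$.

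The main point requiring care is the equivalence used in the edge test, together with the observation that it is the construction of the adjoint submatrix --- not the combinatorial identification of the edges --- that drives the complexity. I note that the second stage can be sharpened to $O(k^3)$ by normalising $B$ to a non-positive matrix via the Hungarian algorithm and reading off the ``allowed'' edges of its equality subgraph through a strongly-connected-components computation, but such a refinement is unnecessary for the claimed $O(k^2n^3)$ bound.
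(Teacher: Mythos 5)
Your proof is correct, and its first stage coincides exactly with the paper's: each of the $k^2$ entries $\adj(A)_{j,i}=\per\big(A_{\{i\}^c,\{j\}^c}\big)$ is computed by a separate $O(n^3)$ Hungarian run, giving $O(k^2n^3)$ overall, and this stage dominates. Where you genuinely diverge is the identification stage. Writing $B=\adj(A)_{J,I}$, the paper runs the Hungarian algorithm once on $B$ ($O(k^3)$) to bring it to a form where entries on optimal permutations equal $\unit$ and all others are at most $\unit$, and then applies the strict visualization scaling of~\cite{scal} to strictly decrease every entry not on an optimal permutation, so that $E_{J,I}^{ap}$ can be read off after a further $O(k^3)$. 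You instead test each edge $(j,i)$ separately via the criterion $B_{j,i}+\per\big(B[J\setminus\{j\},I\setminus\{i\}]\big)=\per(B)$; this characterization is valid, since the left-hand side is the weight of the heaviest bijection forced through $(j,i)$ and it attains $\per(B)$ precisely when some optimal bijection uses that edge. The price is $k^2$ reduced assignment problems, i.e.\ $O(k^5)$ for the second stage rather than the paper's $O(k^3)$. Your approach buys simplicity and self-containedness --- it needs nothing beyond the Hungarian algorithm and an elementary exchange observation, whereas the paper relies on the scaling machinery of~\cite{scal} --- at the cost of a cruder second stage; since $k\le n$ gives $O(k^5)\le O(k^2n^3)$, both arguments establish the stated bound, and your closing remark correctly identifies the paper's sharper alternative.
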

\proof
We first compute $\adj(M)_{J,I}$. To compute each entry $\adj(M)_{j,i}$ we need to find an optimal
bijection in $M_{\{i\}^c,\{j\}^c}$. This takes $O((n-1)^3)$ operations 
for each entry, and hence $O(k^2n^3)$ operations for all $k^2$ edges.   

We now identify the set of edges $E_{J,I}^{ap}$ from $J$ to $I$ that are contained
in these bijections.  To do this we first apply the Hungarian algorithm
to $\adj(M)_{J,I}$. Having complexity $O(k^3)$, it brings matrix $\adj(M)_{J,I}$
to the form where all entries on optimal permutations are $\unit$ and all other entries do not 
exceed $\unit$. In order to identify all edges on optimal permutations we can further decrease the 
entries which do not lie on optimal permutations. This can be done, e.g. , by means of the strict visualization scaling of~\cite{scal} in no more that $O(k^3)$ operations.
\endproof

Note that in general, except for imposing~\eqref{e:essential-edges}, we 
do not assume that the entries of $M$ and $C$ are anyhow related to 
one another.

\if{
The supervisor could use the matrix $C$, or immediately $C'$ if the set $E_{JI}^{ap}$ is 
already known to them, to order the supervisions from most desirable to least desirable, giving high values to $C_{i,j}$ (or $C'_{i,j}$) for which supervising $i$ on $j$ is important.  Then, {if} a set of $k$ assignments with supervisions $I$ on $J$ and optimal base value does have $i$ supervised on $j$, this set of assignments is more likely to be rated as best.
}\fi

In other words, we now want to rate a set of $k$ assignments with supervisions under two criteria.
First, the set must have optimal base value with respect to assignment  matrix $A$.
Then, of those that meet this criteria, we can choose to optimise with respect to the supervision priority matrix $C$.

\begin{exa} 
\label{Cexa-1}
Recalling Example~\ref{Cexa}, suppose that it is highly desired that person $2$ is supervised on task $1$, it is recommended that person $2$ is 
supervised on task $2$  and that person $4$ is supervised on task $1$, however person $4$ is well-trained on task $2$. Then we might set 
\begin{equation}
\label{e:Cexa-1}
C=\begin{pmatrix}3&1\\1&0\end{pmatrix}
\end{equation}
and conclude that the overall most valued supervisions are $2$ on $1$ with 
$4$ on $2$ (regardless of what we have in $M$ and although $4$ is well-trained on $2$).
 That is, the supervision $\sigma_1$ is optimal. \end{exa}

\if{
\begin{rem} It might be that the optimal permutation in $C$ does not have an optimal base value with respect to $A$. In that case 
$\per(C)\notin S$, that is, the optimal supervision is not simply $\per(C)$. 

Also, given supervisions in $S$ which link to an optimal $(1,k)$-regular multigraph, one may recover the  set of $k$ assignments  with optimal base value in the following way: 
Let $\sigma\in S$, and $\pi_1,\dots,\pi_k\in S_n$ the optimal assignments associated to $\sigma$. 
 For each edge $(i_t, \sigma(i_t)):\ t\in[k]$, take the bijections $\beta_t$ corresponding to an optimal permutation of 
$A[[n]\setminus\{i_t\}, [n]\setminus\{\sigma(i_t)\}]$, i.e. to the value of $\adj(A)_{\sigma(i_t),i_t}$. 
 Then $\pi_t$ is exactly $\beta_t$ together with the assignment $i_t$ to $\sigma(i_t)$.\end{rem}
}\fi

Let us now formally define a general version of the optimization problem which we solved
in Example~\ref{Cexa-1}.

\begin{df} Given an assignment  matrix $M\in\rmax^{n\times n}$ and a supervision priority matrix $C\in\rmax^{k\times k}$, the optimal value of $k$ assignments 
with supervisions $I\subset[n]$ on tasks $J\subset[n]$ prioritized by $C$ is defined as 
\begin{equation}
\label{e:prioritized}
\adj(M)^{\wedge k}_{J,I}\odot \per(C)=\per(\adj(M)_{J,I})\odot\per(C),
\end{equation}
where $C$ is as in Definition~\ref{def:C}. 

\end{df}

\begin{obs}
For the value defined in~\eqref{e:prioritized} we have
\begin{equation}
\label{e:prioritized-mod}
\begin{split}
\per(\adj(M)_{J,I})\odot\per(C) &= 
\bigoplus_{\sigma'\in S_{J,I}}w(\sigma', \adj(M)_{J,I})\odot\bigoplus_{\sigma\in S_{J,I}} 
w(\sigma^{-1},C)\\
&=\bigoplus_{\sigma'\in S_{J,I}}w(\sigma', \adj(M)_{J,I})\odot\bigoplus_{\sigma\in\ap(\adj(A)_{J,I})}
w(\sigma^{-1}, C).
\end{split}
\end{equation}
This value is attained by those $k$ assignments with supervisions $I$ of $J$ for which the
total value of these supervisions (computed from the priority matrix $C$) 
is the greatest.
\end{obs}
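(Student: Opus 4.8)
The plan is to read~\eqref{e:prioritized-mod} as two separate equalities, followed by a combinatorial reading of the right-hand side.

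\textbf{First equality.} I would obtain this directly from the definition of the tropical permanent together with a reindexing by inversion. On one hand, $\per(\adj(M)_{J,I})=\bigoplus_{\sigma'\in S_{J,I}}w(\sigma',\adj(M)_{J,I})$ is simply the definition of the permanent of the $k\times k$ submatrix with rows $J$ and columns $I$. On the other hand, $C$ has rows indexed by $I$ and columns by $J$, so $\per(C)=\bigoplus_{\tau\in S_{I,J}}w(\tau,C)$; since inversion $\tau\mapsto\tau^{-1}$ is a bijection between $S_{I,J}$ and $S_{J,I}$, this equals $\bigoplus_{\sigma\in S_{J,I}}w(\sigma^{-1},C)$. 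Multiplying the two (i.e.\ adding, over $\rmax$) yields the first line of~\eqref{e:prioritized-mod}. This step is purely formal.

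\textbf{Second equality.} Here lies the real content, and it is where assumption~\eqref{e:essential-edges} enters. Since $\ap(\adj(M)_{J,I})\subseteq S_{J,I}$, the inequality $\bigoplus_{\sigma\in S_{J,I}}w(\sigma^{-1},C)\geq\bigoplus_{\sigma\in\ap(\adj(M)_{J,I})}w(\sigma^{-1},C)$ is automatic. For the reverse inequality I would argue that the maximum on the left is attained inside $\ap(\adj(M)_{J,I})$. By the standing hypothesis that at least one bijection has finite weight with respect to $C$, the left-hand maximum is finite, hence attained at some $\sigma^\ast\in S_{J,I}$ with $w((\sigma^\ast)^{-1},C)\neq\zero$. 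Finiteness of this weight forces $C_{\sigma^\ast(j),j}\neq\zero$ for every $j\in J$, so by~\eqref{e:essential-edges} every edge $(j,\sigma^\ast(j))$ lies in $E_{J,I}^{ap}$.

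\textbf{Main obstacle.} It remains to upgrade ``all edges of $\sigma^\ast$ lie in $E_{J,I}^{ap}$'' to ``$\sigma^\ast$ is itself optimal'', i.e.\ $\sigma^\ast\in\ap(\adj(M)_{J,I})$; this is the crux. I would prove it by passing to the Hungarian-normalized matrix $B=D_1\otimes\adj(M)_{J,I}\otimes D_2$, which is non-positive, has all optimal bijections of weight $\unit$, and (being obtained by diagonal scaling, which shifts every bijection weight by the same constant) has the same set $\ap$ of optimal bijections and hence the same edge set $E_{J,I}^{ap}$. Each edge of $E_{J,I}^{ap}$ lies on some optimal bijection of weight $\unit$, all of whose entries are therefore $\unit$; in particular that edge has $B$-value $\unit$. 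Consequently $w(\sigma^\ast,B)=\unit$, so $\sigma^\ast$ is optimal, which gives $\sigma^\ast\in\ap(\adj(M)_{J,I})$ and with it the missing inequality. Since $A$ and $M$ denote the same assignment matrix, $\ap(\adj(A)_{J,I})=\ap(\adj(M)_{J,I})$ and the second equality follows.

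\textbf{Interpretation.} Finally I would read off the last assertion. By the earlier Observation, the factor $\per(\adj(M)_{J,I})$ is the optimal base value of $k$ assignments with supervisions $I$ on $J$, a quantity independent of the particular supervision chosen. By the second equality just proved, the remaining factor equals the largest priority $w(\sigma^{-1},C)$ taken over those supervisions $\sigma$ that already realize the optimal base value. Hence the value in~\eqref{e:prioritized} is attained exactly by the supervisions $\sigma\in\ap(\adj(M)_{J,I})$ maximizing $w(\sigma^{-1},C)$, that is, by the base-optimal assignments whose total supervision value under $C$ is greatest.
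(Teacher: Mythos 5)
Your proposal is correct. Note that the paper states this result as an Observation and offers no proof at all, so there is nothing to compare line by line; what your argument does is make explicit precisely the reasoning the paper takes for granted. Your first equality is indeed purely formal (definition of the tropical permanent plus the bijection $\tau\mapsto\tau^{-1}$ between $S_{I,J}$ and $S_{J,I}$), and your identification of the crux is exactly right: one must upgrade ``every edge of $\sigma^\ast$ lies in $E_{J,I}^{ap}$'' to ``$\sigma^\ast\in\ap(\adj(M)_{J,I})$''. This is the same silent step the paper makes later, in the proof of its complexity Proposition, where from~\eqref{e:essential-edges} and $(\beta(i),i)\in E_{J,I}^{ap}$ it concludes directly that $\beta^{-1}\in\ap(\adj(M)_{J,I})$. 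Your Hungarian-normalization justification of that step is sound and uses only facts the paper itself records: $B=D_1\otimes\adj(M)_{J,I}\otimes D_2$ is non-positive, diagonal scaling shifts all bijection weights by a common constant (hence preserves $\ap$ and $E_{J,I}^{ap}$), and every optimal bijection of $B$ has all its entries equal to $\unit$; therefore each edge of $E_{J,I}^{ap}$ has $B$-value $\unit$, so $w(\sigma^\ast,B)=\unit$ is maximal and $\sigma^\ast$ is optimal. (The direction that could fail --- a $\unit$ entry of $B$ need not lie on any optimal bijection --- is never used, so there is no gap.) The only cosmetic point is that the finiteness hypothesis on $C$ is not strictly needed for the second equality: if all bijections had weight $\zero$ with respect to $C$, both sides would be $\zero$ and the identity would hold vacuously; but invoking it, as you do, makes the attainment statement at the end cleaner.
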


\begin{pro}
Given $M\in\rmax^{n\times n}$, $I,J\subseteq [n]$, set $E_{JI}^{ap}$ and 
$C\in\rmax^{k\times k}$ that satisfies~\ref{e:essential-edges}
one needs at most $O(k^3)$ operations to compute value~\eqref{e:prioritized} and
no more than $O(kn^3)$ operations to identify a set of supervised assignments that attains that value.
\end{pro}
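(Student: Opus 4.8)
The plan is to prove the two claims separately, exploiting the fact that assumption~\eqref{e:essential-edges} decouples the optimization over $\adj(M)_{J,I}$ from the optimization over $C$, exactly as recorded in~\eqref{e:prioritized-mod}. For the first claim, observe that value~\eqref{e:prioritized} is the max-plus product $\per(\adj(M)_{J,I})\odot\per(C)$ of two permanents of $k\times k$ matrices. The number $\per(\adj(M)_{J,I})$ is already available as a byproduct of identifying $E_{J,I}^{ap}$ in the preceding proposition: running the Hungarian algorithm on $\adj(M)_{J,I}$ both normalizes the matrix (so that $E_{J,I}^{ap}$ can be read off) and returns its permanent. It then remains to compute $\per(C)$, which is a single $k\times k$ assignment problem solvable in $O(k^3)$ operations, and to add the two numbers in one $\odot$-operation. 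Hence value~\eqref{e:prioritized} is obtained in $O(k^3)$ operations.

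For the second claim, the plan is first to produce a single bijection that is simultaneously optimal for both criteria, and then to expand each of its $k$ supervision edges into a full permutation. First I would run the Hungarian algorithm on $C$ to obtain a bijection $\beta\colon I\to J$ attaining $\per(C)$; by hypothesis at least one finite-weight bijection with respect to $C$ exists, so $\beta$ has finite weight and all its entries $C_{i,\beta(i)}$ are finite. The key step is to note that, by~\eqref{e:essential-edges}, every edge $(i,\beta(i))$ then satisfies $(\beta(i),i)\in E_{J,I}^{ap}$. Since the edges of $E_{J,I}^{ap}$ are precisely those lying on optimal bijections of $\adj(M)_{J,I}$ (equivalently, after the strict visualization scaling used in the preceding proposition, the normalized entries of weight $\unit$), the reverse bijection $\sigma:=\beta^{-1}\in S_{J,I}$ uses only optimal edges and therefore attains $\per(\adj(M)_{J,I})$. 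Thus $\sigma\in\ap(\adj(M)_{J,I})$ while $\beta=\sigma^{-1}$ attains $\per(C)$, which by~\eqref{e:prioritized-mod} is exactly the configuration attaining value~\eqref{e:prioritized}.

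It then remains to realize the supervisions $\sigma=(j_1,i_1),\dots,(j_k,i_k)$ by genuine assignments. For each supervision edge $(i_t,j_t)$ I would compute an optimal bijection $\rho_t$ of the $(n-1)\times(n-1)$ submatrix $M_{\{i_t\}^c,\{j_t\}^c}$, whose weight equals $\adj(M)_{j_t,i_t}=\per(M_{\{i_t\}^c,\{j_t\}^c})$; setting $\pi_t:=\rho_t\cup\{(i_t,j_t)\}$ gives a permutation in $S_n$ with $\pi_t(i_t)=j_t$. The base value of the resulting $k$ assignments is $\sum_{t\in[k]}\adj(M)_{j_t,i_t}=w(\sigma,\adj(M)_{J,I})=\per(\adj(M)_{J,I})$, while their total priority is $\sum_{t\in[k]}C_{i_t,j_t}=w(\beta,C)=\per(C)$, so they attain value~\eqref{e:prioritized}. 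Counting operations, the Hungarian algorithm on $C$ costs $O(k^3)$ and each of the $k$ completions is an assignment problem on an $(n-1)\times(n-1)$ matrix costing $O(n^3)$, for a total of $O(k^3+kn^3)=O(kn^3)$ since $k\le n$.

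The point I expect to require the most care is not the complexity count, which is routine once the algorithm is fixed, but the justification that optimizing over $C$ alone already yields a base-optimal supervision. This is precisely where~\eqref{e:essential-edges} is essential: it guarantees that every $C$-admissible bijection lies entirely on optimal edges of $\adj(M)_{J,I}$, so the two stages of the optimization genuinely decouple and no joint search over $\ap(\adj(M)_{J,I})$ is needed. I would therefore make sure to spell out the characterization of $E_{J,I}^{ap}$ as the set of edges on optimal bijections, as this is the hinge that keeps both the value computation and the construction within the claimed bounds.
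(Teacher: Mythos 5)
Your proposal is correct and follows essentially the same route as the paper's proof: compute $\beta\in\ap(C)$ by the Hungarian algorithm in $O(k^3)$ operations, use assumption~\eqref{e:essential-edges} (together with the finiteness of at least one bijection in $C$) to conclude $\beta^{-1}\in\ap(\adj(M)_{J,I})$, and then expand each of the $k$ supervision edges into a full assignment by solving an $(n-1)\times(n-1)$ assignment problem on $M_{\{i_t\}^c,\{j_t\}^c}$, giving $O(k^3+kn^3)=O(kn^3)$ in total. The only (immaterial) difference is in how $\per(\adj(M)_{J,I})$ is obtained: you take it as a byproduct of the preceding proposition, whereas the paper recomputes it in $O(k)$ operations as $w(\beta^{-1})$ along the edges of $E_{J,I}^{ap}$; your explicit justification that a bijection supported on $E_{J,I}^{ap}$ is itself optimal is a point the paper glosses over, and it is sound.
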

\proof

We first compute 
$\per(C)$ in at most $O(k^3)$ operations  and identify a bijection  
$\beta\in\ap(C)$. By~\eqref{e:essential-edges}
$(\beta(i),i)\in E_{J,I}^{ap}$ for each $i\in I$, hence $\beta^{-1}\in\ap(\adj(M)_{J,I})$ and 
we compute $w(\beta^{-1})=\per(\adj(M)_{J,I})$ in $O(k)$ operations. 

By now, value~\eqref{e:prioritized} has been found. In order to find a supervised assignment that 
attains that value we take $\beta\in\ap(C)$ found previously, and for each edge $(i,j)$ from that bijection
we find an optimal bijection in $M_{\{i\}^c,\{j\}^c}$. This takes $O((n-1)^3)$ operations for each edge, and hence $O(kn^3)$ operations for all $k$ edges of the bijection.

\endproof

\begin{rem}
\label{r:improvecompl}
In the procedure described above, it is possible to decrease the amount of operations required to 
compute $\per(C)$. Indeed, let $m$ be the number of entries that are not equal to $-\infty$
in $C$. Applying the Fibonacci heaps technique of~\cite{FT87}, the complexity of
computing $\per(C)$ is decreased to $O(k^2\log k+km)$.
\end{rem}

\begin{exa}
\label{Cexa-2}
In Example~\eqref{Cexa-1} we found that $\sigma_1$ is optimal  
if we take $C$ as in~\eqref{e:Cexa-1}.
For the assignment $2\rightarrow 1$ of $\sigma_1$ we consider $$M[[4]\setminus\{2\}, [4]\setminus\{1\}]=\begin{pmatrix}{\bf 1}&-2&-4\\4&0&{\bf 6}\\-6&{\bf 3}&0
\end{pmatrix},$$ which has optimal permutation shown in bold, corresponding to the bijection $\beta_{1}=(1,2)(3,4)(4,3)$.
By adding in the supervision edge, we recover $\pi_1=(1\ 2)(2\ 1)(3\ 4)(4\ 3)$.
Similarly, for the supervision $4\rightarrow 2$, we get the bijection $\beta_2=(1,1)(2,3)(3,4)$ which corresponds to $\pi_2=(1)(2\ 3\ 4).$
\end{exa}

\if{
\begin{alg}\label{Alg1}\ 

INPUT:  Assignment  matrix $A\in\rmax^{n\times n}$, $k$, $I,J\subset[n]:\ |I|=k=|J|$, supervision priority matrix $C\in\rmax^{k\times k}$.

OUTPUT: An optimum set of assignments~$\pi_1,\dots,\pi_k\in S_n$\text{ with supervision~}$\tau\in S_{I,J}$.

Note:  Maintain original row and column indexing throughout the algorithm.

\begin{enumerate}
\item Calculate $\adj(A)$ and keep track of one optimal bijection $\beta(i,j)$ for each entry $\adj(A)_{ji}$.

\item Use Hungarian algorithm to transform $\adj(A)_{J,I}$ to $B\le 0$ with a maximal set of independent zeros.
Determine $\operatorname{ap}(B)$ and $|\operatorname{ap}(B)|$ from this set. 

\item If $|\operatorname{ap}(B)|=1$, then let $\sigma$ be the bijection $I\mapsto J$ corresponding to the unique optimal permutation in $B$. Go to 5. 

\item  Otherwise, define $C'\in\rmax^{k\times k}$ with entries $$C'_{ij}=\begin{cases}C_{ij}&;\ \text{if } B_{ji}=0,\\ -\infty&;\ \text{otherwise.} \end{cases}$$
Find a bijection $\sigma$ corresponding to an optimal permutation in $C'$.

\item  For each $i\in [k]$ let $\pi_i$ be the permutation formed of $\beta(i,\sigma(i))$ and the assignment $i$ to $\sigma(i)$.
\end{enumerate}\end{alg}

\todo[inline]{A: As it is now, the supervisor should set priorities to all pairs i,j. That can be quite a lot assignments, for 
example 20x20, where eventually there might only be for example two optimal supervisions, which sums up to 40 assignments... 
Why not  preform steps 1-3 without setting C, and then set it in step 4? (that is C' is C, there is no need for this mediating matrix)} 

\begin{thm}Algorithm \ref{Alg1} is correct and runs in $\mathcal{O}(n^5)$ time.\end{thm}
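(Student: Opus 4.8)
The plan is to verify the two claims separately: first that the output $(\pi_1,\dots,\pi_k;\sigma)$ really attains the prioritized value~\eqref{e:prioritized}, and then that the whole procedure costs $O(n^5)$. Correctness will rest on the two Observations already established: the one identifying the optimal base value with $\adj(A)^{\wedge k}_{J,I}$, and the Observation establishing~\eqref{e:prioritized-mod}, which expresses the prioritized optimum as the base optimum $\per(\adj(A)_{J,I})$ combined with the best priority taken over $\ap(\adj(A)_{J,I})$. Algorithm~\ref{Alg1} is essentially a direct implementation of that formula, together with the reconstruction of full assignments from one optimal bijection per adjoint entry.

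For correctness I would argue step by step. Step~1 computes, for each entry $\adj(A)_{ji}=\per(A_{\{i\}^c,\{j\}^c})$, one optimal bijection $\beta(i,j)$ of $A_{\{i\}^c,\{j\}^c}$, which is exactly the data needed in Step~5. Step~2 applies the Hungarian algorithm to $\adj(A)_{J,I}$, which preserves the set of optimal bijections and yields $B\le\zero$ whose optimal permutations carry only $\zero$ entries, so $\ap(B)=\ap(\adj(A)_{J,I})$, and $E_{J,I}^{ap}$ is recovered as the support of the entries lying on some optimal permutation. If the optimal supervision is unique (Step~3) it is the only admissible choice and is passed on. Otherwise (Step~4) the masked matrix $C'$ agrees with $C$ on $E_{J,I}^{ap}$ and is $-\infty$ off it, so by~\eqref{e:prioritized-mod} an optimal permutation $\sigma$ of $C'$ maximizes the priority over $\ap(\adj(A)_{J,I})$ and is therefore an optimal supervision. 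Finally, Step~5 adjoins the edge $(i,\sigma(i))$ to $\beta(i,\sigma(i))$ to form $\pi_i\in S_n$; since $\beta(i,\sigma(i))$ realizes $\adj(A)_{\sigma(i),i}$, the non-supervision parts of the $\pi_i$ sum to the optimal base value $\per(\adj(A)_{J,I})$ whenever $\sigma$ is optimal, and adjoining the priority $\per(C)$ yields exactly~\eqref{e:prioritized}.

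For the running time I would show that Step~1 dominates. Computing each of the $n^2$ entries of $\adj(A)$ solves one assignment problem on an $(n-1)\times(n-1)$ matrix, costing $O(n^3)$ by the Hungarian algorithm, for a total of $O(n^5)$; storing one optimal $\beta(i,j)$ per entry adds only $O(n^2)$ space and no extra order of time. Every remaining step operates on objects of size $k\le n$: the Hungarian algorithm on $\adj(A)_{J,I}$ and the optimization on $C'$ each cost $O(k^3)\le O(n^3)$, while the reconstruction in Step~5 costs $O(kn)$. Hence the overall complexity is $O(n^5)$.

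The main obstacle, and the place needing genuine care, is the handling of multiple optimal supervisions in Steps~2--4. Two points must be addressed. First, the Hungarian algorithm alone only produces a \emph{maximal set of independent zeros}; to recover \emph{all} edges of $E_{J,I}^{ap}$ one must, exactly as in the proof of the preceding proposition, apply a strict visualization scaling so that the $\zero$ entries of the transformed matrix are precisely the optimal edges. Second, the quantity $|\ap(B)|$ cannot be obtained by enumeration, since counting optimal assignments is $\#P$-hard; the algorithm, however, only needs to decide whether this number equals $1$, and testing uniqueness of a minimum-cost perfect matching (equivalently, whether the bipartite graph of tight edges admits a unique perfect matching) is polynomial and well within the $O(n^3)$ budget. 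Once these two subtleties are settled, the equivalence in~\eqref{e:prioritized-mod} makes the correctness of the $C'$-optimization immediate, and the complexity bound follows as above.
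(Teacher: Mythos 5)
Your proof is correct and follows the paper's own route: correctness is obtained by chaining (i) each adjoint entry stores an optimal bijection $\beta(i,j)$, (ii) the Hungarian transform turns the optimal permutations of $\adj(A)_{J,I}$ into exactly the all-zero permutations of $B$, (iii) the masked priority matrix $C'$ confines the second optimization to those permutations, and (iv) Step~5 reassembles the full assignments; the complexity is dominated by the $n^2$ Hungarian runs of Step~1, giving $\mathcal{O}(n^5)$. The two ``subtleties'' you flag are where you diverge from the paper, and they deserve comment. First, the strict visualization scaling that you declare necessary is in fact not needed for this algorithm: since $B\le 0$ and the optimal permutations of $B$ have weight $0$, any \emph{full} permutation all of whose entries are zeros of $B$ automatically has weight $0$ and is therefore optimal. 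Hence masking $C$ by the zero entries of $B^T$, exactly as the algorithm prescribes, already guarantees that every finite-weight permutation of $C'$ reflects an optimal permutation of $\adj(A)_{J,I}$; the zeros lying outside $E_{J,I}^{ap}$ are harmless because they can never be completed to a full permutation of zeros. The scaling of~\cite{scal} is invoked in the paper only for the separate task of identifying $E_{J,I}^{ap}$ so that the supervisor can set up $C$ cheaply, not inside this algorithm; your variant with the scaling is also correct and within the time budget, it is simply not forced. Second, your treatment of $|\ap(B)|$ is a genuine improvement over the paper: the paper's proof dismisses Steps~2--3 as having ``negligible complexity'' without explaining how uniqueness of the optimal permutation is decided, whereas you correctly note that counting optimal permutations is \#P-hard while \emph{deciding uniqueness} is polynomial (find one optimal permutation from the independent zeros, then test for an alternating cycle of zeros). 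Alternatively, one can observe that Step~3 is dispensable altogether: when the optimum is unique, $C'$ has a single finite-weight permutation, so Step~4 returns it anyway.
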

\proof 

Correctness:

It is clear that $\adj(A)_{j,i}$ corresponds to a bijection $\beta(i,j): N\setminus\{i\}\rightarrow N\setminus\{j\}$.  Further, adding $i\rightarrow j$ to $\beta(i,j)$ creates a permutation $\pi(i,j)\in S_n$ of maximum weight which includes the assignment $i\rightarrow j$.

Then any optimal permutation $\sigma\in S_k$ of $\adj(A)_{J,I}$ maximizes $\sum_{i=1}^k\beta(\sigma(i), i)$.  To optimize $\sum_{i=1}^k \pi(\sigma(i),i)$ with respect to this, it remains to optimize $\sum_{i=1}^k C_{\sigma(i),i}$.  

By the Hungarian method, $B$ has the same set of maximum weighted permutations as $\adj(A)_{J,I}$ and further, these permutations can be identified by the zero entries, i.e. 
$$w(\pi, B)=0\ge w(\tau, B)\Leftrightarrow B_{i,\pi(i)}=0\ \forall i.$$  

Thus, since $C'$ only preserves weights of entries corresponding to zeros in $B^T$,  it follows that any permutation in $C'$ will be an optimal permutation in $B^T$, and therefore reflects an optimal permutation (after reversing) in $\adj(A)_{J,I}$.

Complexity: 

Step 1 requires $n^2$ applications of Hungarian algorithm on $(n-1)\times (n-1)$ matrix and thus has complexity $\mathcal{O}(n^5)$. Step 2 needs only one application of Hungarian algorithm and has 
complexity $\mathcal{O}(n^3)$, and Step 3 has negligible complexity. On Step 4, the same algorithm is applied to a $k\times k$ matrix and 
this adds only $\mathcal{O}(k^3)$ complexity, and complexity of Step 5 is again negligible. Thus the complexity is 
dominated by Step 1 ($\mathcal{O}(n^5)$) and this completes the proof.
\endproof


}\fi

 \section{Jacobi identity}

The following theorem is the tropical analogue of the Jacobi identity (see~\cite{{Fallat&Johnson}}), and was recently proved in~\cite{AGN}.
\begin{thm}\label{JI} (Jacobi identity,~\cite[Theorem~3.4]{AGN})\label{thm:JacobiIdentity} Let $M\in\R_{\max}^{n\times n}$ and  
$I,J\subseteq[n]$ such that~$|I|=|J|=k$. For every $k\in \{0\}\cup [n]$, at least one of the following statements holds \begin{enumerate}
\item $[\per(M)^{\odot -1}\adj(M)]^{\wedge k}_{_{I,J}}=\per(M)^{\odot -1}
M^{\wedge n-k}_{_{J^c, I^c}},$  

\item There exist distinct bijections  $ \pi, 
\sigma\in S_{I,J}$  such that~
$$[\adj(M)]^{\wedge k}_{_{I,J}}=\sum_{i\in I}\adj(M)_{i,\pi(i)}=\sum_{i\in I}
\adj(M)_{i,\sigma(i)}.$$\end{enumerate}\end{thm}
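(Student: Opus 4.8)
The plan is to read both sides of statement~(1) combinatorially and reduce the identity to a comparison of weights of two families of subgraphs of $G_M$ sharing one degree pattern. First I would normalize by a diagonal scaling $B=D_1\otimes M\otimes D_2$ (the Hungarian normalization) so that I may assume $\per(M)=0$, $M\le 0$ entrywise, and that the optimal permutations of $M$ are exactly the zero-weight perfect matchings in the bipartite graph of tight edges; since both sides of~(1) and the set of optimal bijections in~(2) transform compatibly under scaling, this costs nothing, and the factor $\per(M)^{\odot-1}$ disappears so that (1) becomes $[\adj(M)]^{\wedge k}_{I,J}=M^{\wedge n-k}_{J^c,I^c}$. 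Next I would unfold the two sides. By the base-value interpretation, $[\adj(M)]^{\wedge k}_{I,J}$ is the weight of an optimal $(1,k)$-regular multigraph: a supervision bijection $\tau\colon I\to J$ and, for each $i\in I$, a permutation $\rho_i$ through the edge $(\tau(i),i)$, with value $\sum_{i\in I}\big(w(\rho_i)-M_{\tau(i),i}\big)$. Deleting each supervision edge and overlaying the $k$ near-permutations $\rho_i\setminus\{(\tau(i),i)\}$ gives a bipartite multigraph $H$ of row-degree $k$ on $J^c$ and $k-1$ on $J$, column-degree $k$ on $I^c$ and $k-1$ on $I$, with $w(H)=[\adj(M)]^{\wedge k}_{I,J}$. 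The right side $M^{\wedge n-k}_{J^c,I^c}$ is the weight of an optimal bijection $\beta^*\colon J^c\to I^c$, and overlaying $\beta^*$ with $k-1$ copies of an optimal permutation $\pi^*$ produces a multigraph $H'$ with the very same degree sequence and $w(H')=M^{\wedge n-k}_{J^c,I^c}$ (here I use $\per(M)=0$). Thus both sides are weights of multigraphs with the same prescribed degree deficiency, and Proposition~\ref{eqv2} together with K\"onig's edge-colouring theorem should let me pass between their matching-decompositions.

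With this set-up the free inequality $[\adj(M)]^{\wedge k}_{I,J}\ge M^{\wedge n-k}_{J^c,I^c}$ comes from decomposing $H'$ back into $k$ near-permutations carrying a supervision bijection, exhibiting a supervised assignment of weight $w(H')$; this is itself an instance of the matching-decomposition afforded by Proposition~\ref{eqv2}.

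For the dichotomy I would start from an optimal $H$ and add back the matching $\beta_0=\{(\tau(i),i):i\in I\}$ of supervision edges, so that $H+\beta_0$ is a $k$-regular bipartite multigraph which, by Proposition~\ref{eqv2}, decomposes into $k$ perfect matchings; the crux is the position of the $k$ edges of $\beta_0$. If some proper $k$-edge-colouring makes $\beta_0$ monochromatic, then removing that colour class exhibits $H$ as $(k-1)$ perfect matchings plus one bijection $J^c\to I^c$, whence $[\adj(M)]^{\wedge k}_{I,J}=w(H)\le (k-1)\per(M)+M^{\wedge n-k}_{J^c,I^c}=M^{\wedge n-k}_{J^c,I^c}$; combined with the free inequality this is exactly equality~(1). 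If no colouring makes $\beta_0$ monochromatic, I would run an alternating-cycle (Kempe) argument: two supervision edges lying in different colour classes lie on an alternating cycle of $H+\beta_0$, and rotating along it regroups the near-permutations into a decomposition of the same multigraph $H$ carrying a different supervision bijection $\tau'\ne\tau$. Since rotation preserves the edge-multiset, $w(H)$ is unchanged, and because each resulting near-permutation $b'_i$ satisfies $w(b'_i)\le\adj(M)_{i,\tau'(i)}$ while $\sum_{i\in I}w(b'_i)=w(H)=[\adj(M)]^{\wedge k}_{I,J}$, the bijection $\tau'$ is also optimal; taking $\pi=\tau$ and $\sigma=\tau'$ yields statement~(2).

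I expect the main obstacle to be precisely this last step: proving that failure of the clean $(k-1)$-permutations-plus-bijection decomposition of $H$ is equivalent to the existence of a genuinely distinct optimal supervision bijection, and that the alternating-cycle rotation both preserves weight and produces bijections $\tau,\tau'$ that are honestly different rather than a relabelling. This reduces to a careful Hall/K\"onig analysis of which partial matchings $J\to I$ can arise as a colour class of $H+\beta_0$, and it is here that the normalization $\per(M)=0$ (so that all reweightings are nonpositive and weight can only be preserved, never gained) is essential.
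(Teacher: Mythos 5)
Your set-up is sound and in fact runs parallel to the paper's: normalize so that the optimal permutation is the identity with weight $\unit$, read $[\adj(M)]^{\wedge k}_{I,J}$ as the weight of an optimal $(1,k)$-regular multigraph $H$ of non-supervision edges, read $M^{\wedge n-k}_{J^c,I^c}$ as a bijection $\beta^*$ padded with $k-1$ optimal permutations, and split into the case where $H$ rearranges into $(k-1)$ perfect matchings plus a bijection $J^c\to I^c$ (your monochromatic case, which correctly yields equality~(1)) and the case where it does not. The genuine gap is your mechanism for the second case. Any re-decomposition of the \emph{fixed} multigraph $H+\beta_0$ into $k$ perfect matchings — in particular any Kempe/alternating-cycle rotation between two colour classes — leaves the supervision edge set $\beta_0$ unchanged: rotation only permutes \emph{which} matching carries \emph{which} edge of $\beta_0$, which is precisely the relabelling you feared. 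Alternative~(2) demands a bijection $\sigma\in S_{I,J}$ that differs from $\pi$ \emph{as a set of edges}, so you must change the multigraph, not its colouring: keep the counted edges $H$ and attach a \emph{different} closing bijection $\beta_0'$. Nor can you repair this by rotating and then re-designating other edges of $H+\beta_0$ as supervisions: supervision edges need not lie in $E$ at all (they may have weight $\zero$, i.e.\ $-\infty$), and since their weights are excluded from the base value, re-designation changes the counted weight by $w(\beta_0)-w(\beta_0')$ and destroys both finiteness and the optimality accounting.

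What the paper does at this point (in the proof of Theorem~\ref{thm:JacobiGraph}) is surgery on paths rather than on colourings. After reducing, via Lemma~\ref{norm}, to near-permutations whose non-supervision part is a single elementary path $P_i$ (from the target to the source of its supervision edge $e_i$) plus loops, the dichotomy becomes: either the $P_i$ are pairwise disjoint — then their concatenation with loops is the bijection $\pi'\in S_{I^c,J^c}$ and case~(1) holds — or two paths $P_i,P_j$ share a node (a common endpoint, or an endpoint inside a path, or an interior node). In each of these subcases one splits $P_i,P_j$ at the shared node and cross-recombines the segments into paths running from $s(P_i)$ to $t(P_j)$ and from $s(P_j)$ to $t(P_i)$; closing these with the \emph{new} edges $(t(P_j),s(P_i))$ and $(t(P_i),s(P_j))$ gives a supervision bijection $\tilde\pi\neq\pi$ whose counted edge multiset is the same (up to weight-$\unit$ loops, and up to deleting cycles, which by Lemma~\ref{norm} cannot increase weight), hence $\tilde\pi$ is also optimal — exactly alternative~(2). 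This crossing swap is what your rotation cannot produce. A secondary, smaller gap: Proposition~\ref{eqv2} plus K\"onig give \emph{some} decomposition of a $k$-regular multigraph into $k$ perfect matchings, but not one in which each matching contains exactly one supervision edge; that equitable refinement needs an extra argument (or is bypassed, as in the paper's Proposition~\ref{Prop:JacobiEqualityFindAssignment}, by decomposing $\beta^*$ into elementary paths and closing each one by a single supervision edge).
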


Actually, the identity proved in \cite{AGN} is a true analogue of the original, for it is proved over an extension of the 
tropical semiring  called  \textit{Symmetrized} (see~\cite{G.Ths}). The Symmetrized tropical semiring  is constructed by copies of
 $R_{\max}$ to include so called `balancing' elements, and therefore 
include the signs of permutations throughout (in $\per$, $\adj$ and $\wedge k$). Since in this paper we work over the 
(non-extended) tropical semiring, the signs are omitted from the formulation stated in Theorem~\ref{JI}.

\if{
Let us also formulate a version of this identity when $\per(M)=\unit$.

\begin{cor}
\label{c:perunit}
 If $\per(M)=\unit$ then the Jacobi identity becomes
\begin{enumerate}
\item $[\adj(M)]^{\wedge k}_{_{I,J}}=M^{\wedge n-k}_{_{J^c, I^c}},$  

\noindent or

\item There exist distinct bijections  $ \pi, 
\sigma\in S_{I,J}$  such that~
$$[\adj(M)]^{\wedge k}_{_{I,J}}=\sum_{i\in I}\adj(M)_{i,\pi(i)}=\sum_{i\in I}
\adj(M)_{i,\sigma(i)}.$$\end{enumerate}\end{cor}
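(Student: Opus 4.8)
The key idea is to translate the whole identity into the language of weighted digraphs and $(1,k)$-regular multigraphs developed in Section~2, and then to reason about how two optimal structures can be superimposed. Recall that, after normalizing so that $\per(M)=\unit$, the left-hand side $[\adj(M)]^{\wedge k}_{I,J}$ is the weight of an optimal bijection $\sigma\in S_{I,J}$ with respect to $\adj(M)$, which by the Observation following the base-value definition equals the weight of an optimal $(1,k)$-regular multigraph $\big([n],\biguplus_{t\in[k]}E_{\rho_t},\sigma\big)$ with respect to $I,J$. Here each $\rho_t\in S_n$ is an optimal permutation subject to passing through the supervision edge $(i_t,\sigma(i_t))$, and the displayed base-value formula gives $[\adj(M)]^{\wedge k}_{I,J}=\sum_{t\in[k]}\big(w(\rho_t)-M_{i_t,\sigma(i_t)}\big)$. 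The right-hand side $M^{\wedge n-k}_{J^c,I^c}$ is, again by the compound-matrix definition, the weight of an optimal bijection $\tau\colon J^c\to I^c$ in $M$. So the theorem reduces to comparing the weight of an optimal ``supervised multigraph'' against the weight of an optimal bijection on the complementary index sets.

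First I would establish the inequality $[\adj(M)]^{\wedge k}_{I,J}\geq \per(M)^{\odot-1}M^{\wedge n-k}_{J^c,I^c}$, which should hold unconditionally. The construction is direct: take an optimal bijection $\tau\colon J^c\to I^c$ realizing $M^{\wedge n-k}_{J^c,I^c}$ and an optimal permutation $\pi\in\ap(M)$ realizing $\per(M)$. For each supervision edge we want to build a permutation $\rho_t$ through $(i_t,\sigma(i_t))$ whose weight is large; combining the bijection $\tau$ on $J^c\to I^c$ with a carefully chosen completion on the remaining indices (using $\pi$ to close up cycles and paths via the decomposition machinery set up before Definition~\ref{optimal}) yields $k$ permutations whose total base value is at least the weight of $\tau$ normalized by $\per(M)$. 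Summing these contributions and dividing out the multiplicities of the supervision assignment gives the desired inequality. This half is essentially a ``lower bound by explicit construction'' argument.

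The substantive content is the dichotomy: either equality holds in the reverse direction (giving statement~(1)), or optimality is achieved by at least two distinct bijections (statement~(2)). To see this I would start from an optimal supervised multigraph attaining $[\adj(M)]^{\wedge k}_{I,J}$, take the disjoint union of its $k$ permutation subgraphs $\rho_t$ together with the supervision bijection $\sigma$, and analyze the resulting superposition as a $k$-regular (multi)graph via Proposition~\ref{eqv2}. The crucial combinatorial step is to decompose this superposition into disjoint cycles and paths, where each path runs between the ``deleted'' endpoints $i_t$ and $\sigma(i_t)$ of the supervision edges. If one can always re-route these paths so that the edges supported on $J^c\to I^c$ assemble into a single bijection $\tau$ of the complementary sets with $w(\tau)=\per(M)\odot[\adj(M)]^{\wedge k}_{I,J}$, then statement~(1) holds; the obstruction to doing so canonically is precisely an ambiguity in how the cycle/path pieces are matched to the supervision edges, and any such ambiguity produces a genuinely different optimal bijection $\pi\neq\sigma$ in $S_{I,J}$, giving statement~(2).

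\emph{The main obstacle} I anticipate is controlling this path/cycle decomposition of the superimposed $k$ optimal permutations and pinning down exactly when the routing is forced versus when it admits an alternative. Concretely, the superposition of $\sigma$ with the $\rho_t$ need not decompose uniquely into paths joining supervision endpoints; the degrees of freedom in pairing up path-ends are what must be shown to coincide with the alternative-optimum case~(2), while a unique (forced) decomposition must be shown to yield the exact identity~(1). Making this rigorous requires carefully tracking weights through the cycle/path decomposition identity $w(\rho)=\bigodot_{\overline{s(p)}\in\mathcal{C}}w(p)\bigodot_{\overline{s(p)}\in\mathcal{P}}w(p)$ and invoking optimality of both sides to rule out strict inequalities. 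I expect the argument to hinge on a counting or parity observation: if the optimal value can be realized while the complementary edges fail to close into a single bijection $\tau\colon J^c\to I^c$, then swapping two path segments exhibits the second optimal bijection demanded by case~(2).
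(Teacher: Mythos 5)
Your proposal never uses the one fact that makes this statement a corollary: Theorem~\ref{JI} is already available, and the statement follows from it in one line. Since $\per(M)=\unit$ gives $\per(M)^{\odot -1}=\unit$, and tropical multiplication by the scalar $\unit$ changes nothing, alternative (1) of Theorem~\ref{JI}, namely $[\per(M)^{\odot -1}\adj(M)]^{\wedge k}_{_{I,J}}=\per(M)^{\odot -1}M^{\wedge n-k}_{_{J^c,I^c}}$, collapses to $[\adj(M)]^{\wedge k}_{_{I,J}}=M^{\wedge n-k}_{_{J^c,I^c}}$, while alternative (2) does not involve $\per(M)$ at all; that is the entire proof. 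Instead you set out to re-derive the dichotomy from scratch in the multigraph language, i.e.\ to re-prove the tropical Jacobi identity itself.

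That longer route is legitimate in principle --- it is essentially what the paper does in Theorem~\ref{thm:JacobiGraph} and Corollary~\ref{cor:JacobiGraph} --- but your plan leaves open precisely the steps that constitute that proof, so as it stands there is a genuine gap. What you flag as the ``main obstacle'' (when the re-routing of the superimposed paths is forced, versus when an alternative routing exists) is not settled by a ``counting or parity observation''; in the paper it requires: (i) a reduction to the normalized case ($\Id$ optimal and diagonal entries $\unit$), which is \emph{not} implied by the hypothesis $\per(M)=\unit$ alone and needs the invariance of both alternatives under row/column scaling and renumbering; (ii) Lemma~\ref{norm}, used to replace any extra cycles of the permutations $\rho_t$ by loops so that each $\rho_t$ carries exactly one non-loop cycle through its supervision edge --- without this the superposition you propose to decompose is uncontrolled; and (iii) a three-way case analysis (a path source coinciding with a path target, an intermediate node that is a source or target, two paths sharing an intermediate node), in each of which an explicit exchange of path segments produces a second optimal supervision bijection $\tilde{\pi}\neq\pi$, after non-elementary composites are cut back down to elementary paths using normalization. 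One genuinely nice point in your plan is closing Case~(1) by sandwiching with the inequality $[\adj(M)]^{\wedge k}_{_{I,J}}\geq M^{\wedge n-k}_{_{J^c,I^c}}$ from your first step, rather than proving optimality of the complementary bijection directly by contradiction as the paper does; but until items (i)--(iii) are supplied, the proposal restates the difficulty rather than resolving it.
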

}\fi

The main purpose of this section will be to obtain 
a graph-theoretic counterpart of Theorem~\ref{JI}.

Let us consider the following kind of matrices.

\begin{df}
$M\in\rmax^{n\times n}$ is called {\bf  normalized} if $\Id$ is 
an optimal permutation of $M$ (i.e., $\Id\in\ap(M)$) and $M_{i,i}=\unit$ for all $i\in [n]$.

$M$ is called {\bf strictly normalized} if it is normalized and $\Id$ is the only optimal permutation
(i.e., $\ap(M)=\{\Id\}$).
\end{df}

\if{
\begin{df}
$M\in\rmax^{n\times n}$ is called {\bf  (tropically) diagonally dominant} if the identity is 
an optimal permutation of $M$, i.e., if $\Id\in\ap(M)$.

$M$ is called {\bf strongly (tropically) diagonally dominant} if the identity is 
the only optimal permutation of $M$, i.e., if $\ap(M)=\{\Id\}$.
\end{df}
}\fi

We will use the following result, whose proof will be skipped.

\begin{lem}\label{norm}
If $M\in\R_{\max}^{n\times n}$ is normalized (respectively, strictly normalized) then 
the weight of every cycle 
is not greater (respectively, smaller) than  the weight of the identity permutation on its subset of nodes, and therefore the weight of every
permutation is not greater (respectively, is smaller) than that of a permutation obtained by replacing either of its cycles
by the corresponding identity permutation.
\end{lem}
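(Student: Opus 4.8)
The plan is to reduce both assertions to a single statement about individual cycles, exploiting that $M_{i,i}=\unit$ for all $i$ and that the weight of any permutation decomposes as the sum of the weights of its disjoint cycles.

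First I would establish the cycle inequality. Given a cycle $c=(v_1,\dots,v_m,v_1)$ on the node set $S=\{v_1,\dots,v_m\}$, let $\pi_c\in S_n$ be the permutation acting as $c$ on $S$ and as the identity on $[n]\setminus S$. Because every diagonal entry equals $\unit$, the identity part contributes nothing to the weight, so $w(\pi_c)=w(c)$, whereas the identity permutation on $S$ has weight $\unit$. If $M$ is normalized, then $\Id\in\ap(M)$ gives $\per(M)=\sum_{i\in[n]}M_{i,i}=\unit$, so every permutation has weight at most $\unit$; applied to $\pi_c$ this yields $w(c)=w(\pi_c)\leq\per(M)=\unit$, i.e.\ the weight of $c$ does not exceed the weight of the identity on $S$. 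If instead $M$ is strictly normalized and $c$ is a genuine cycle (length at least $2$, so that $\pi_c\neq\Id$), then $\ap(M)=\{\Id\}$ forces the strict inequality $w(c)=w(\pi_c)<\per(M)=\unit$.

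Then the claim for permutations follows by additivity. I would decompose $\pi$ into its disjoint cycles, as recalled in the preliminaries, fix one cycle $c$ on node set $S_c$, and let $\pi'$ be obtained from $\pi$ by replacing $c$ with the identity on $S_c$, so that $\pi'$ agrees with $\pi$ off $S_c$ and fixes every node of $S_c$. Every cycle of $\pi$ other than $c$ is also a cycle of $\pi'$ and contributes the same weight to both, so $w(\pi)-w(\pi')$ equals the weight of $c$ less the weight of the identity on $S_c$, which is simply $w(c)$ since $M_{i,i}=\unit$ for $i\in S_c$. By the cycle inequality, $w(c)\leq\unit$ in the normalized case (and $w(c)<\unit$ in the strictly normalized case whenever $c$ is non-trivial), whence $w(\pi)\leq w(\pi')$ (respectively $w(\pi)<w(\pi')$).

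I do not expect a substantive obstacle, since the argument is essentially a one-line comparison with $\per(M)=\unit$ repeated for each cycle. The only points needing care are bookkeeping: treating a loop $c$ (for which both inequalities degenerate to equalities and the replacement is vacuous) so as not to overclaim strictness, and invoking explicitly the additivity of the weight over the disjoint-cycle decomposition, which is already recorded in the preliminaries. The one genuinely useful observation is that closing a single cycle off its node set with identity loops yields a legitimate permutation of $[n]$ whose weight is exactly that of the cycle, which is precisely what permits the direct comparison with the permanent.
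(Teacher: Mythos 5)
The paper gives no proof of this lemma: it is stated with the remark ``whose proof will be skipped,'' so there is nothing to compare your argument against. Your proof is correct and is evidently the intended one: extending a cycle $c$ by identity loops of weight $\unit$ to a full permutation $\pi_c$ and comparing $w(\pi_c)=w(c)$ with $\per(M)=\unit$ yields the cycle inequality (strict when $\ap(M)=\{\Id\}$ and $\pi_c\neq\Id$), and additivity of weights over the disjoint-cycle decomposition transfers it to permutations. Your explicit handling of loops, where both inequalities degenerate to equalities and the replacement is vacuous, is also the right reading of the strict case of the statement.
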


\if{
\begin{lem}\label{norm}
If $M\in\R_{\max}^{n\times n}$ is diagonally dominant
(respectively, strictly diagonally dominant) then 
the weight of every cycle 
is not greater (respectively, smaller) than  the weight of the identity permutation on its subset of nodes, and therefore the weight of every
permutation is not greater (respectively, is smaller) than that of a permutation obtained by replacing either of its cycles
by the corresponding identity permutation.
\end{lem}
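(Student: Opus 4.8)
The statement has two parts: a claim about cycles and a claim about general permutations, with the latter derived from the former. I would prove the cycle claim first and then reduce permutations to cycles.

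\emph{The cycle claim.} Let $M$ be normalized, so that $\Id\in\ap(M)$ and $M_{i,i}=\unit=0$ for all $i$. Consider any cycle $c=(v_1,v_2,\dots,v_m,v_1)$ on a node-subset $S=\{v_1,\dots,v_m\}$. Its weight is $w(c)=\sum_{t=1}^m M_{v_t,v_{t+1}}$ (indices mod $m$), whereas the identity permutation restricted to $S$ has weight $\sum_{i\in S}M_{i,i}=0$ since $M$ is normalized. Thus I must show $w(c)\le 0$ (with strict inequality in the strictly normalized case). First I would build a permutation $\pi\in S_n$ that agrees with $c$ on $S$ and acts as the identity off $S$; since $\Id\in\ap(M)$ we have $w(\Id)=\per(M)\ge w(\pi)$. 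Now $w(\pi)=w(c)+\sum_{i\notin S}M_{i,i}=w(c)+0$, while $w(\Id)=\sum_{i\in[n]}M_{i,i}=0$, so $w(c)\le 0$ immediately. In the strictly normalized case $\ap(M)=\{\Id\}$, so $\pi\neq\Id$ forces $w(\pi)<w(\Id)$, i.e.\ $w(c)<0$, proving the strict inequality. The key point making this clean is that normalization lets the identity on $S$ have weight exactly $0$, so the inequality $w(c)\le\sum_{i\in S}M_{i,i}$ is literally the inequality $w(\pi)\le w(\Id)$ supplied by optimality of $\Id$.

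\emph{The permutation claim.} Given any permutation $\tau\in S_n$, decompose it into disjoint cycles $c_1,\dots,c_r$ on node-subsets $S_1,\dots,S_r$ (as in the decomposition into disjoint cycles discussed in the preliminaries), so that $w(\tau)=\sum_{\ell=1}^r w(c_\ell)$. Fix one cycle $c_\ell$ and form the permutation $\tau'$ obtained from $\tau$ by replacing $c_\ell$ with the identity on $S_\ell$. Then $w(\tau')=w(\tau)-w(c_\ell)+\sum_{i\in S_\ell}M_{i,i}=w(\tau)-w(c_\ell)$, using $M_{i,i}=0$. By the cycle claim $w(c_\ell)\le 0$, hence $w(\tau')\ge w(\tau)$ (and strictly greater in the strictly normalized case whenever $c_\ell$ is a nontrivial cycle, since then $w(c_\ell)<0$). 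This is exactly the assertion that replacing a cycle by the corresponding identity does not decrease (respectively, strictly increases) the weight.

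\emph{Main obstacle.} There is no deep difficulty here; the only care needed is bookkeeping in the cycle-replacement step, namely verifying that swapping out one cycle for the identity on the same node-set changes the weight by precisely $-w(c_\ell)+\sum_{i\in S_\ell}M_{i,i}$ and leaves all other cycles untouched, so that optimality of $\Id$ transfers correctly. The normalization hypothesis $M_{i,i}=0$ is what makes every identity-on-a-subset contribution vanish, turning these comparisons into direct consequences of $\Id\in\ap(M)$; handling the strict case just amounts to tracking that a nontrivial cycle yields a genuinely different permutation, which must then be strictly suboptimal.
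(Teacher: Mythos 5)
Your argument is correct, and since the paper explicitly skips the proof of Lemma~\ref{norm} (``whose proof will be skipped''), there is no paper proof to compare against; your write-up in effect supplies the omitted details. The route you take --- extend a cycle $c$ on node set $S$ to a permutation $\pi\in S_n$ by fixing every node off $S$, read the inequality $w(c)\le\sum_{i\in S}M_{i,i}$ off the optimality inequality $w(\pi)\le w(\Id)$, then treat a general permutation by replacing one cycle at a time in its disjoint-cycle decomposition --- is the natural argument and is consistent with how the lemma is used later (Remark~\ref{r:strongly}, Corollary~\ref{cor:JacobiGraph}).

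One caveat deserves attention. The statement as worded assumes only that $\Id\in\ap(M)$ (respectively $\ap(M)=\{\Id\}$), whereas you begin by assuming $M$ normalized, i.e.\ additionally $M_{i,i}=\unit$ for all $i$. To cover the weaker hypothesis you should either reduce to the normalized case by tropically scaling column $i$ by $M_{i,i}^{\odot -1}$ (this shifts the weight of every permutation by the same constant, hence preserves $\ap(M)$, and shifts $w(c)$ and the identity-on-$S$ weight equally), or argue directly via
$$w(c)+\sum_{i\notin S}M_{i,i}=w(\pi)\le w(\Id)=\sum_{i\in S}M_{i,i}+\sum_{i\notin S}M_{i,i}$$
and cancel $\sum_{i\notin S}M_{i,i}$. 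Either step requires the diagonal entries to be finite, which holds as soon as $\per(M)=w(\Id)$ is finite; in the degenerate case $\per(M)=-\infty$ the diagonally dominant form of the claim can genuinely fail (take $n=3$ with $M_{1,2}=M_{2,1}=\unit$ and all other entries $-\infty$: every permutation has weight $-\infty$, so $\Id\in\ap(M)$ vacuously, yet the cycle $(1\,2)$ has weight $\unit>-\infty=M_{1,1}\odot M_{2,2}$). This is presumably why the paper's operative hypothesis is normalization, under which your proof is complete as written. A final small point: in the cycle-replacement step, state the comparison additively ($w(\tau')\ge w(\tau)$ because $w(c_\ell)\le\unit$ and the other cycles are untouched) rather than as a subtraction $w(\tau)-w(c_\ell)$, so the bookkeeping remains valid when some of the weights involved are $-\infty$; your strict-case tracking (a nontrivial cycle gives $\pi\ne\Id$, hence $w(\pi)<w(\Id)$) is fine.
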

}\fi

We now consider optimal $(1,k)$-regular multigraphs of the digraph associated with 
a normalized matrix.

\begin{nota}\label{thmnota}
Let~$G=([n],E)$ be a simple weighted digraph with~$n$ nodes associated with a normalized matrix, i.e.,
$\Id$ is an optimal
permutation and all the loops (i.e., edges $e$ with $s(e)=t(e)$) are in $E$ and have weight $\unit$.
Let us construct an optimal~$(1,k)$-regular multigraph  of~$G$ with 
respect to~$I,J\subseteq[n]$. We will denote it by $F=\big([n],\biguplus_{i\in[k]}E_{\rho_i},\pi\big)$

Assume~$\pi\ne \Id$, and let 
\begin{equation}\label{N1}e_i\text{ be the edge of~$E_\pi$ which is in~$\rho_i$, }\end{equation} 
\begin{equation}\label{N2}\text{$C_i$ be the cycle in~$\rho_i$ which includes~$e_i$,}\end{equation}
\begin{equation}\label{N3}\text{$P_i$ be the elementary path such that $C_i=P_i\circ e_i$.}\end{equation}
 One can find $\rho_i, C_i, P_i, e_i$ described
 in Figures~\ref{P&C} and~\ref{EP&C} in case that $\rho_i$ has no more than one cycle which is not a loop.

\begin{figure}[h]
\begin{center}\begin{tikzpicture}
[main_node/.style={->,>=stealth',shorten >=1pt,circle,fill=black,minimum size=0.8em,inner sep=1pt,line width=0.3mm},
sx_node/.style={->,>=stealth',shorten >=1pt,circle,fill=white,minimum size=0.8em,inner sep=1pt,line width=0.3mm},
sec_node/.style={->,>=stealth',shorten >=1pt,circle,out=30,in=150,fill=white,minimum size=0.8em,inner sep=1pt,line width=0.3mm},
thr_node/.style={->,>=stealth',shorten >=1pt,circle,out=-150,in=-30,fill=black,minimum size=0.8em,inner sep=1pt,line width=0.3mm},
fr_node/.style={->,>=stealth',shorten >=1pt,circle,out=150,in=270,fill=black,minimum size=0.8em,inner sep=1pt,line width=0.3mm},
ft_node/.style={->,>=stealth',shorten >=1pt,circle,out=-30,in=90,fill=black,minimum size=0.8em,inner sep=1pt,line width=0.3mm},
prl_node/.style={-,>=stealth',shorten >=1pt,circle,out=240,in=120,fill=black,minimum size=0.8em,inner sep=1pt,line width=0.5mm},
prr_node/.style={-,>=stealth',shorten >=1pt,circle,out=-60,in=60,fill=black,minimum size=0.8em,inner sep=1pt,line width=0.5mm}]

\node[draw=white](m13) at (-4,4) {$\color{black}\rho_i:$};

\node[main_node](m11) at (-2,3.75) {$\color{black}$};
\node[draw=white](m13) at (-1,3.75) {$\color{black}\cdots$};
\node[main_node](m12) at (0,3.75) {$\color{black}$};

\node[draw=white](m13) at (-1,3.25) {$\color{black}\underbrace{\ \ \ \ \ \ \ \ \ \ \ \ \ \ \ \ }$};
\node[draw=white](m13) at (-1,2.75) {$\color{black}\text{loops on }[n]\setminus\overline{s(C_i)}\ \ $};

 \draw[sx_node]  (m11) to [out=130,in=50,looseness=8] (m11);  
 \draw[sx_node]  (m12) to [out=130,in=50,looseness=8] (m12);  

\node[main_node](c1) at (2,4.5) {$\color{black}$};
\node[main_node](c2) at (2,3) {$\color{black}$};
\draw[main_node]  (c2) edge node{} (c1);
\node[main_node](c3) at (3.5,5) {$\color{black}$};
\draw[main_node,color=red]  (c1) edge node{$\begin{array}{c}e_i\in E_\pi\\\\\\\end{array}$} (c3);
\node[main_node](c4) at (4.5,4) {$\color{black}$};
\draw[main_node]  (c3) edge node{} (c4);
\node[main_node](c5) at (3.75,2.75) {$\color{black}$};
\draw[main_node]  (c4) edge node{} (c5);
\draw[main_node]  (c5) edge node{} (c2);

\node[draw=white](m13) at (3,2.25) {$\color{black}C_i$};

\end{tikzpicture}\end{center}\caption{Paths and cycles}\label{P&C}\end{figure}
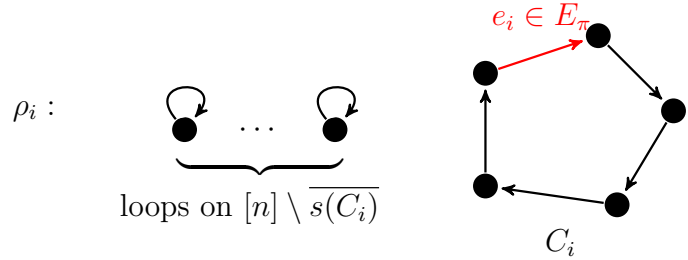

\begin{figure}[h]
\begin{center}\begin{tikzpicture}
[main_node/.style={->,>=stealth',shorten >=1pt,circle,fill=black,minimum size=0.8em,inner sep=1pt,line width=0.3mm},
sx_node/.style={->,>=stealth',shorten >=1pt,circle,fill=white,minimum size=0.8em,inner sep=1pt,line width=0.3mm},
sec_node/.style={->,>=stealth',shorten >=1pt,circle,out=30,in=150,fill=white,minimum size=0.8em,inner sep=1pt,line width=0.3mm},
thr_node/.style={->,>=stealth',shorten >=1pt,circle,out=-150,in=-30,fill=black,minimum size=0.8em,inner sep=1pt,line width=0.3mm},
fr_node/.style={->,>=stealth',shorten >=1pt,circle,out=150,in=270,fill=black,minimum size=0.8em,inner sep=1pt,line width=0.3mm},
ft_node/.style={->,>=stealth',shorten >=1pt,circle,out=-30,in=90,fill=black,minimum size=0.8em,inner sep=1pt,line width=0.3mm},
prl_node/.style={-,>=stealth',shorten >=1pt,circle,out=240,in=120,fill=black,minimum size=0.8em,inner sep=1pt,line width=0.5mm},
prr_node/.style={-,>=stealth',shorten >=1pt,circle,out=-60,in=60,fill=black,minimum size=0.8em,inner sep=1pt,line width=0.5mm}]

\node[draw=white](m13) at (-4,4) {$\color{black}\rho'_i:$};

\node[main_node](m11) at (-2,3.75) {$\color{black}$};
\node[draw=white](m13) at (-1,3.75) {$\color{black}\cdots$};
\node[main_node](m12) at (0,3.75) {$\color{black}$};

\node[draw=white](m13) at (-1,3.25) {$\color{black}\underbrace{\ \ \ \ \ \ \ \ \ \ \ \ \ \ \ \ }$};
\node[draw=white](m13) at (-1,2.75) {$\color{black}\text{loops on }[n]\setminus V(E_{P_i})\ \ $};

 \draw[sx_node]  (m11) to [out=130,in=50,looseness=8] (m11);  
 \draw[sx_node]  (m12) to [out=130,in=50,looseness=8] (m12);  

\node[main_node](c1) at (2,4.5) {$\color{black}$};
\node[](c) at (1.5,5) {$\color{black}s(e_i)=t(P_i)$};
\node[main_node](c2) at (2,3) {$\color{black}$};
\draw[main_node]  (c2) edge node{} (c1);
\node[main_node](c3) at (3.5,5) {$\color{black}$};
\node[](c) at (4,5.5) {$\color{black}s(P_i)=t(e_i)$};
\node[main_node](c4) at (4.5,4) {$\color{black}$};
\draw[main_node]  (c3) edge node{} (c4);
\node[main_node](c5) at (3.75,2.75) {$\color{black}$};
\draw[main_node]  (c4) edge node{} (c5);
\draw[main_node]  (c5) edge node{} (c2);

\node[draw=white](m13) at (3,2.25) {$\color{black}P_i$};

\end{tikzpicture}\end{center}\caption{Elementary paths, sources and targets}\label{EP&C}\end{figure}
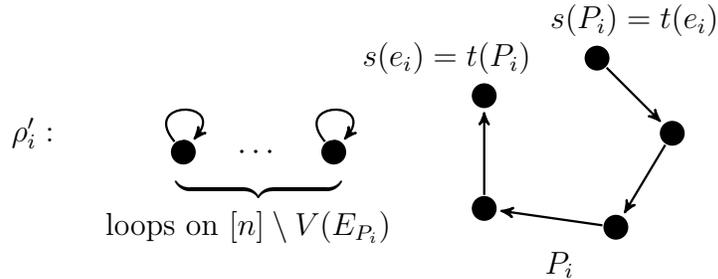

\end{nota}

Let us also add some remarks on Notation~\ref{thmnota}.

\begin{rem}
For every~$i\in[k]$, denote by $\rho'_i$ the subdigraph~$([n],E_{\rho_i}\setminus e_i)$. 
Note that it is a permutation: $\rho'_i\in S_{\{s(e_i)\}^c,\{t(e_i)\}^c}$.
 Note also that $P_i$ introduced in~\eqref{N3} has~$s(P_i)=t(e_i)$  
(and $s(e_i)=t(P_i)$), and that it is the only elementary path in the decomposition of $\rho'_i $. See Figure~\ref{EP&C}.
\end{rem}

\begin{rem}
Note that
\begin{itemize} 
\item $\overline{s(C_i)}=\overline{s(P_i)}\cup t(P_i)=V(E_{P_i})$,  

\item All 
the loops on $[n]\setminus(\bigcup_{i\in[k]} \overline{s(C_i)})$ belong to $F$ if all 
$\rho_i$ are as on Figure~\ref{P&C}.

\item Some (but not all) $e_i$ might be loops. In this case $C_i=e_i$, and $E_{P_i}=\emptyset$.
\end{itemize}
\end{rem}

\begin{rem}
\label{r:strongly}
If identity is the unique optimal permutation (i.e., if the associated matrix is strictly
normalized), then all permutations in any optimal $(1,k)$-regular multigraph have no more than
one cycle in their decomposition (as in Figure~\ref{P&C}).
\end{rem}

\begin{df}\label{equiv}
For a given multigraph $F$, denote its set of loops by $L_F$.
A multigraph $F$ is said to be \textit{equivalent} to a simple graph $G$ if 
$E_G=E_F\setminus L_F$. \end{df}

The following theorem 
is (loosely speaking) an equivalent of  the Jacobi identity (\cite{AGN}) in terms of graphs theory. 
Recall that the $i,j$-entry of the adjoint matrix of a matrix $A$ is determined by permutations in $S_n$ where $j$ is sent to $i$.
The entry $A_{j,i}$ is then removed from the product resulting in this entry. The corresponding graph of this product is therefore 
missing the $j,i$-edge.
These `missing $\adj$-edges' are combinatorially considered as  fixed conditions. 
(It resembles the combinatorial approach explaining  the identity for the number of  choices of a subset of  size $k$ from a set of size~$n$:
$${{n-1}\choose{k-1}}+{{n-1}\choose{k}}=
{{n}\choose{k}}$$
as choosing from a set of size $n-1$ having a fixed element either inside the subset or outside the subset.)

\begin{thm}\label{thm:JacobiGraph}  Let $G=([n],E)$ be a simple weighted digraph
where $\Id$ is an optimal permutation.  
Let $F=\big([n],\biguplus_{i\in[k]}E_{\rho_i},\pi\big)$ be an optimal~$(1,k)$-regular multigraph  of~$G$ with respect to 
$I,J\subseteq [n]$, such that each $\rho_i$ has no more than one cycle that is not a loop.  
Then there exists a $k$-regular multigraph~$F'=\big([n],\biguplus_{i\in[k]}E_{\tau_i}\big)$ of~$G$ such that  
at least one of the following statements holds 
\begin{enumerate} 
\item  $F'=F$ and 
$E_\pi\subseteq E_{\tau_\ell}$ for some $\ell\in [k]$, and
$\pi'\in S_{I^c,J^c}$ defined by $E_{\pi'}=E_{\tau_\ell}\setminus E_\pi $  
satisfies $\bigcup_{i\in[k]} E_{P_i}\subseteq E_{\pi'}$ and is an optimal bijection (with respect to $I^c, J^c$). 

\item There exists $\tilde{\pi}\in S_{I,J}$ such that
 $F'=\big([n],\biguplus_{i\in[k]}E_{\tau_i},\tilde{\pi}\big)\neq F$ satisfies $$(\biguplus_{i\in[k]}E_{\tau_i})\setminus (E_{\tilde{\pi}}\cup L_{F'})\subseteq (\biguplus_{i\in[k]}E_{\rho_i})\setminus E_\pi$$ and is also an optimal $(1,k)$-regular multigraph with respect to $I,J$.
\end{enumerate} 
\end{thm}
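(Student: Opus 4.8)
The plan is to push everything down to the elementary paths $P_i$ and then split on a single combinatorial alternative. The starting observation is the weight bookkeeping built into Notation~\ref{thmnota}: since $G$ is normalized every loop is weightless and $w(\rho_i)=w(C_i)=w(P_i)+w(e_i)$, so the base value of $F$ collapses to $\sum_{i\in[k]}\bigl(w(\rho_i)-w(e_i)\bigr)=\sum_{i\in[k]}w(P_i)$. Because $s(e_i)=t(P_i)$ and $t(e_i)=s(P_i)$, each cycle satisfies $V(E_{C_i})=V(E_{P_i})$, so the cycles $C_1,\dots,C_k$ are pairwise vertex-disjoint if and only if the paths $P_1,\dots,P_k$ are. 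I would make ``are the $P_i$ pairwise vertex-disjoint?'' the governing dichotomy, matching statements (1) and (2).

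Assume first the $C_i$ are pairwise vertex-disjoint. Then $\biguplus_{i\in[k]}E_{C_i}$ is a disjoint union of cycles, hence a partial permutation, and completing it with loops on the nodes it misses yields one permutation-subgraph $\tau_\ell$ with $E_\pi\subseteq E_{\tau_\ell}$. A degree count shows $E_{\tau_\ell}$ sits inside the multiset $\biguplus_i E_{\rho_i}$ (each uncovered node carries $k$ loops there, each covered node $k-1$), so the complement $\bigl(\biguplus_i E_{\rho_i}\bigr)\setminus E_{\tau_\ell}$ is $(k-1)$-regular and, by Proposition~\ref{eqv2}, splits into permutation-subgraphs $\tau_i$, $i\neq\ell$. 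Taking $F'=F$ this is exactly the shape of statement~(1). Moreover $\pi'$ defined by $E_{\pi'}=E_{\tau_\ell}\setminus E_\pi=\bigcup_i E_{P_i}\cup(\text{loops})$ is, by the same disjointness, a genuine bijection in $S_{I^c,J^c}$ containing every $P_i$, with $w(\pi')=\sum_i w(P_i)$ equal to the base value of $F$; so it only remains to certify that $\pi'$ is an \emph{optimal} bijection with respect to $I^c,J^c$.

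If instead two cycles, say $C_a$ and $C_b$, share a node $v$, I aim for statement~(2). Rerouting at $v$ (pairing the two incoming edges with the opposite outgoing edges) keeps the path-edge multiset $\bigcup_i E_{P_i}$ intact but reconnects $P_a,P_b$ into $P'_a\colon \pi(a)\rightsquigarrow b$ and $P'_b\colon \pi(b)\rightsquigarrow a$; closing these with the supervision that swaps the images of $a$ and $b$ produces a candidate $\tilde\pi=\pi\cdot(a\,b)\neq\pi$ together with a multigraph $F'\neq F$ whose path edges lie inside those of $F$, so the inclusion $(\biguplus_i E_{\tau_i})\setminus(E_{\tilde\pi}\cup L_{F'})\subseteq(\biguplus_i E_{\rho_i})\setminus E_\pi$ holds by construction. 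Since the base value only registers the (unchanged) path edges, $F'$ is again optimal and statement~(2) follows. This is precisely the graph incarnation of the ``two distinct optimal bijections'' alternative in Theorem~\ref{JI}.

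The main obstacle is the optimality of $\pi'$ in the disjoint case, equivalently the inequality ``base value of $F\ \ge\ w(\beta)$ for every $\beta\in S_{I^c,J^c}$''. The natural route is an exchange argument: superimpose $\pi'$ and $\beta$ as two perfect matchings $I^c\to J^c$ (Lemma~\ref{eqv1}), take an alternating cycle of their symmetric difference carrying more $\beta$-weight than $\pi'$-weight, and swap to get a heavier $\pi''$; one then wants $\pi\sqcup\pi''$ to reorganize into a $(1,k)$-regular multigraph of base value $w(\pi'')$, contradicting optimality of $F$. The delicate point is that the competitor $\pi\sqcup\beta$ may route several supervision edges through one cycle, and the crude per-cycle competitor then loses weight controlled only by the non-positive cycle weights of Lemma~\ref{norm}; so the exchange must be arranged to preserve the ``one supervision edge per cycle'' property. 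I expect to resolve this by first reducing to the strictly normalized case (Remark~\ref{r:strongly}), where $\Id$ is the unique optimum, all $\rho_i$ are single cycles, the paths are forced disjoint and $\pi'$ is rigidly the optimal bijection, and then recovering the general statement by a perturbation of the weights; the same reduction explains case~(2), since the presence of a second optimal permutation of $M$ is exactly the source of the second optimal supervision produced by the rerouting. Combined with $w(\pi')\le M^{\wedge n-k}_{I^c,J^c}$ this would force $w(\pi')=M^{\wedge n-k}_{I^c,J^c}$, and the value equality can be cross-checked against Theorem~\ref{JI}.
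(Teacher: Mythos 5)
Your skeleton coincides with the paper's proof: the same governing dichotomy (the paths $P_i$ pairwise disjoint or not), the same Case‑1 assembly of $\tau_\ell$ from the cycles plus loops with the remaining $\tau_i=\Id$, and the same rerouting-at-a-shared-node idea for Case 2. But the two points you defer are exactly where the substance lies, and neither of your plans for them works. First, the optimality of $\pi'$: your proposed reduction to the strictly normalized case rests on the claim that strict normalization forces the paths to be disjoint, and that claim is false --- Remark~\ref{r:strongly} only says that each $\rho_i$ has at most one non-loop cycle, not that distinct $P_i$ are disjoint. Indeed the paper's own example matrix $A$ is strictly normalized (its only vanishing off-diagonal entry is $A_{4,3}$, so every nontrivial cycle has strictly negative weight), yet for $I=\{1,2,3\}$, $J=\{1,3,4\}$ it exhibits Case~2a with two optimal supervisions. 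Moreover, perturbing the weights destroys the hypothesis that the \emph{given} $F$ is optimal, so nothing transfers back from the perturbed instance. The paper instead proves optimality of $\pi'$ by a direct reverse construction that never superimposes the competitor with $\pi$: if $\pi''\in S_{I^c,J^c}$ had $w(\pi'')>w(\pi')$, decompose $\pi''$ into cycles and elementary paths; each path necessarily starts in $I^c\setminus J^c=J\setminus I$ and ends in $J^c\setminus I^c=I\setminus J$, so it can be closed by a single fresh edge into a cycle and padded with loops into a permutation of $S_n$; these permutations, together with $|I\cap J|$ copies of $\Id$ (whose supervision edges are the loops at $I\cap J$), form a $(1,k)$-regular multigraph with respect to $I,J$ whose base value equals $w(\pi'')>w(F)$, contradicting optimality of $F$. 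Each path of $\pi''$ gets its own permutation, so the ``several supervisions per cycle'' difficulty you worry about never arises.

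Second, in the non-disjoint case your assertion that the rerouting ``keeps the path-edge multiset intact'', so that $F'$ is optimal ``by construction'', does not hold in general: the two concatenated walks need not be elementary. If $C_a$ and $C_b$ share two nodes traversed in opposite orders along the two paths, then for either choice of the rerouting node one of the two new walks revisits the other shared node; its edge set then contains a node of in-degree two, hence is not a permutation subgraph, and your $F'$ is not a $(1,k)$-regular multigraph at all. The paper repairs this by excising the nontrivial cycles of the non-elementary walk and replacing them by loops --- which is precisely why statement (2) asserts only an inclusion $\subseteq$ of the non-supervision, non-loop edges rather than equality --- and optimality of the resulting $F'$ is then not automatic: it follows from Lemma~\ref{norm} (the excised cycles have non-positive weight once the matrix is normalized), which gives $w(F')\ge w(F)$, combined with optimality of $F$, which forces equality. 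Note finally that both halves of your argument, like the paper's, require all loops to be present with weight $\unit$; the theorem only assumes $\Id$ optimal, so you also need the paper's opening reduction (rescaling rows or columns changes all competing weights by the same constant) before you may invoke the normalization built into Notation~\ref{thmnota}.
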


\begin{proof} 
Let us first observe that the optimality of a $(1,k)$-regular multigraph with respect to $I,J\in[n]$ 
or a bijection in $S_{I,J}$ will not change if we multiply
each column (or each row) of the matrix associated with $G$ by some scalar. 
Indeed, this will multiply the weight of each $(1,k)$-regular multigraph
by the same constant and the weight of each bijection in $S_{I,J}$ by the same constant. This shows that 
it suffices to prove the theorem for the case where in addition to $\Id$ being optimal, 
all loops are in $E$ and have weight $\unit$, i.e., where the associated matrix is normalized. 
We will further assume that $G$ has this property and consider the following two 
principal cases.

\textbf{Case~1:  All paths $P_i$ for $i\in [k]$ are
pairwise disjoint.}

We will show that (1) occurs by proving that $\biguplus_{i\in[k]}E_{\rho_i}\setminus E_\pi$ is equivalent (in the sense of Definition~\ref{equiv}) to an optimal permutation  w.r.t.~$I^c,J^c$ (see Figure~\ref{case1}).
	
\begin{figure}[h]\begin{center}\begin{tikzpicture}
[main_node/.style={->,>=stealth',shorten >=1pt,circle,fill=black,minimum size=0.8em,inner sep=1pt,line width=0.3mm},
sx_node/.style={->,>=stealth',shorten >=1pt,circle,fill=white,minimum size=0.8em,inner sep=1pt,line width=0.3mm},
sec_node/.style={->,>=stealth',shorten >=1pt,circle,out=30,in=150,fill=white,minimum size=0.8em,inner sep=1pt,line width=0.3mm},
thr_node/.style={->,>=stealth',shorten >=1pt,circle,out=-150,in=-30,fill=black,minimum size=0.8em,inner sep=1pt,line width=0.3mm},
fr_node/.style={->,>=stealth',shorten >=1pt,circle,out=150,in=270,fill=black,minimum size=0.8em,inner sep=1pt,line width=0.3mm},
ft_node/.style={->,>=stealth',shorten >=1pt,circle,out=-30,in=90,fill=black,minimum size=0.8em,inner sep=1pt,line width=0.3mm},
prl_node/.style={-,>=stealth',shorten >=1pt,circle,out=250,in=110,fill=black,minimum size=0.8em,inner sep=1pt,line width=0.5mm},
prr_node/.style={-,>=stealth',shorten >=1pt,circle,out=-70,in=70,fill=black,minimum size=0.8em,inner sep=1pt,line width=0.5mm}]

\node (11) at (-7.4,4) {$\color{black}(1,k)-$};
\node(11) at (-7.4,3.5) {regular};
\node (11) at (-7.4,3) {$\color{black}{\rho_{1}}$};
\node[draw=black,circle,white](21) at (-7.4,2) {$\color{black}\rho_{2}$};
\node (31) at (-7.4,1) {$\color{black}\vdots$};
\node (31) at (-7.4,0) {$\color{black}\rho_{k}$};

\node (s3) at (-7.26,-0.5) {$\color{black}$};
\node (s1) at (-7.26,3.5) {$\color{black}$};

\node (s2) at (-7.55,3.5) {$\color{black}$};
\node (s4) at (-7.55,-0.5) {$\color{black}$};

\draw[prr_node]   (s1) edge node{} (s3);
\draw[prl_node]   (s2) edge node{} (s4);

\node (11) at (-6.5,3) {$\color{black}=$};
\node (21) at (-6.5,2) {$\color{black}=$};
\node (31) at (-6.5,1) {$\color{black}\vdots$};
\node (31) at (-6.5,0) {$\color{black}=$};

\node (11) at (-5.2,3) {$ {\rho_{1}|_{\overline{s(e_1)}^c}}$};
\node (21) at (-5.2,2) {$ {\rho_{2}|_{\overline{s(e_2)}^c}}$};
\node (31) at (-5.2,1) {$ \vdots$};
\node (31) at (-5.2,0) {$ {\rho_{k}|_{\overline{s(e_k)}^c}}$};

\node (s3) at (-4.6,-0.5) {$\color{black}$};
\node (s1) at (-4.6,3.5) {$\color{black}$};

\node (s2) at (-5.8,3.5) {$\color{black}$};
\node (s4) at (-5.8,-0.5) {$\color{black}$};

\draw[prr_node]   (s1) edge node{} (s3);
\draw[prl_node]   (s2) edge node{} (s4);

\node (11) at (-4,3) {$\color{black}\circ$};
\node (21) at (-4,2) {$\color{black}\circ$};
\node (31) at (-4,1) {$\color{black}\vdots$};
\node (31) at (-4,0) {$\color{black}\circ$};

\node (s3) at (-3,-0.5) {$\color{black}$};
\node (s1) at (-3,3.5) {$\color{black}$};

\node (s2) at (-3.4,3.5) {$\color{black}$};
\node (s4) at (-3.4,-0.5) {$\color{black}$};

\draw[prr_node]   (s1) edge node{} (s3);
\draw[prl_node]   (s2) edge node{} (s4);

\node(11) at (-3.2,4) {$S_n$};
\node(11) at (-3.2,3.5) {$\lin$};
\node (11) at (-3.2,3) {$\color{black}C_1$};
\node (21) at (-3.2,2) {$\color{black}C_2$};
\node (31) at (-3.2,1) {$\color{black}\vdots$};
\node (31) at (-3.2,0) {$\color{black}C_k$};
\node (31) at (-3.2,-0.55) {$\circ$};
\node (32) at (-3.2,-0.8) {${\text{loops}}$};
\node (32) at (-3.2,-1.2) {${\text{on~\eqref{**}}}$};

\node (11) at (-2.3,3) {$\color{black}=$};
\node (21) at (-2.3,2) {$\color{black}=$};
\node (31) at (-2.3,1) {$\color{black}\vdots$};
\node (31) at (-2.3,0) {$\color{black}=$};

\node (s3) at (-1.2,-0.5) {$\color{black}$};
\node (s1) at (-1.2,3.5) {$\color{black}$};

\node (s2) at (-1.6,3.5) {$\color{black}$};
\node (s4) at (-1.6,-0.5) {$\color{black}$};

\draw[prr_node]   (s1) edge node{} (s3);
\draw[prl_node]   (s2) edge node{} (s4);

\node (31) at (-1.4,4) {$\Id^{k-1}$};
\node (32) at (-1.4,3.5) {$\lfeq$};
\node (11) at (-1.4,3) {$_{\text{loops}}$};
\node (21) at (-1.4,2) {$_{\text{loops}}$};
\node (31) at (-1.4,1) {\vdots};
\node (31) at (-1.4,0) {$_{\text{loops}}$};

\node (11) at (-0.6,3) {$\color{black}\circ$};
\node (21) at (-0.6,2) {$\color{black}\circ$};
\node (31) at (-0.6,1) {$\color{black}\vdots$};
\node (31) at (-0.6,0) {$\color{black}\circ$};

\node (s3) at (0.2,-0.5) {$\color{black}$};
\node (s1) at (0.2,3.5) {$\color{black}$};

\node (s2) at (0,3.5) {$\color{black}$};
\node (s4) at (0,-0.5) {$\color{black}$};

\draw[prr_node]   (s1) edge node{} (s3);
\draw[prl_node]   (s2) edge node{} (s4);

\node(11) at (0.1,4.6) {$S_{I^c,J^c}$};
\node(11) at (0.1,4.2) {$\lin$};
\node(11) at (0.1,3.9) {$\pi'$};
\node(11) at (0.1,3.5) {$\lfeq$};
\node (11) at (0.1,3) {$\color{black}P_1$};
\node (21) at (0.1,2) {$\color{black}P_2$};
\node (31) at (0.1,1) {$\color{black}\vdots$};
\node (31) at (0.1,0) {$\color{black}P_k$};
\node (31) at (0.1,-0.55) {$\circ$};
\node (32) at (0.1,-0.8) {${\text{loops}}$};
\node (32) at (0.1,-1.2) {${\text{on~\eqref{**}}}$};

\node (11) at (0.8,3) {$\color{black}\circ$};
\node (21) at (0.8,2) {$\color{black}\circ$};
\node (31) at (0.8,1) {$\color{black}\vdots$};
\node (31) at (0.8,0) {$\color{black}\circ$};

\node (s3) at (1.6,-0.5) {$\color{black}$};
\node (s1) at (1.6,3.5) {$\color{black}$};

\node (s2) at (1.4,3.5) {$\color{black}$};
\node (s4) at (1.4,-0.5) {$\color{black}$};

\draw[prr_node]   (s1) edge node{} (s3);
\draw[prl_node]   (s2) edge node{} (s4);

\node(11) at (1.5,4.6) {$S_{I,J}$};
\node(11) at (1.5,4.2) {$\lin$};
\node(11) at (1.5,3.9) {$\pi$};
\node(11) at (1.5,3.5) {$\lfeq$};
\node (11) at (1.5,3) {$\color{black}e_1$};
\node (21) at (1.5,2) {$\color{black}e_2$};
\node (31) at (1.5,1) {$\color{black}\vdots$};
\node(31) at (1.5,0) {$\color{black}e_k$};

\node (a1) at (3,1.5) {$\color{black}$};
\node (a2) at (2,1.5) {$\color{black}$};

\draw[main_node]   (a2) edge node{} (a1);

\node (11) at (3.9,3) {$\color{black}\tau_1=\Id$};
\node (21) at (4,2) {$\color{black}\vdots$};
\node (31) at (4.1,1) {$\color{black}\tau_{k-1}=\Id$};
\node (31) at (4.2,0) {$\color{black}\tau_k=\pi'\circ\pi$};
\end{tikzpicture}\end{center}\caption{Case~(1): Optimal $(1,k)$-regular multigraph $F$ is equivalent to an optimal permutation  w.r.t.~$I^c,J^c$. (See Notations~\eqref{N1}\ --\ \eqref{N3})}\label{case1}\end{figure}
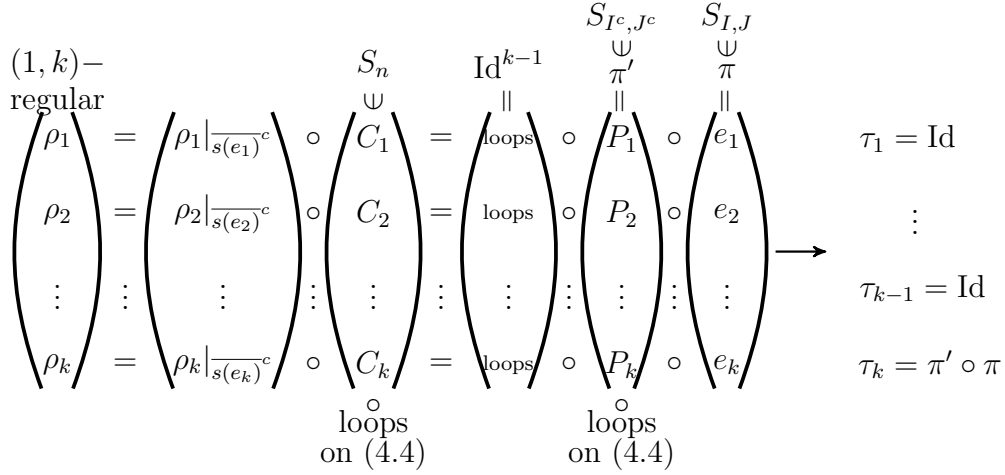

To formalize the description of this case, which will later help us to deal with the remaining 
cases, observe that $C_i$ (equivalently $P_i$. See Notation~\ref{N2}) are disjoint exactly when all the following conditions hold:

\begin{enumerate}[\ \ \ (a)]
\item\label{(a)} All sources and targets are disjoint, which is equivalent to
$$s(P_i)\in J\setminus I=I^c\setminus J^c\text{ and }t(P_i)\in I\setminus J=J^c\setminus I^c,\ \ \forall i\in [k], $$
\item\label{(b)} Sources and targets are disjoint to all intermediate nodes, which is equivalent to
$$V(E_{P_i})\setminus\{s(P_i),t(P_i)\}\subseteq I^c\cap J^c,\ \ \forall i\in [k],$$
\item\label{(c)} All intermediate nodes are disjoint:
 $$(V(E_{P_i})\setminus\{s(P_i),t(P_i)\})\bigcap(V(E_{P_j})\setminus\{s(P_j),t(P_j)\})=\emptyset,\ \ \forall i\ne j.$$ 
\end{enumerate} 

Then, under these  conditions, the composition of~$C_1\circ\cdots\circ C_k$ with the loops on 
\begin{equation}\label{**}[n]\setminus\big(\underbrace{\bigcup_{i\in[k]}
V(E_{P_i})}_{\supseteq I\cup J}\big)=(J\cup I)^c\setminus\big(\bigcup_{i\in[k]}V(E_{P_i})\big)=J^c\cap I^c\setminus\big(\bigcup_{i\in[k]}{V(E_{P_i})}\big)\end{equation}
is a permutation that can be taken for $\tau_{\ell}$  
and the composition of~$P_1\circ\cdots\circ P_k$  
with these loops  for the 
bijection~$\pi'\in S_{I^c, J^c}$ such that  $E_{\pi'}=E_{\tau_i}\setminus E_{\pi}$.
Note that all the edges of $(\biguplus_{i\in[k]}E_{\rho_i})\setminus E_\pi$ that are not in $E_{\tau_{\ell}}$
are loops that compose $k-1$ copies of the identity permutation. So we can take 
$F'=\big([n],\biguplus_{i\in[k]}E_{\tau_i}\big)$, where $\tau_{\ell}$ is as defined above and 
all other $\tau_i$ for $i\neq \ell$ are equal to $\Id$, and then $F'=F$ as claimed.

It remains to show that $\pi'$ is 
an optimal bijection of~$S_{I^c,J^c}$ in~$G$. 
For this we remind that the optimality of $F$ is achieved by its weight, to which the  weights of $e_i$ (see Notation~\ref{N1})
do not contribute.
That is, by the rearrangement, the weight of $F$ is the weight of the rearrangement not including  the  weights of $e_i$,
which is the weight of the bijection  $\pi'$ (multiplied by weight $\unit$ of loops 
constituting the $k-1$ copies of identity).

By contradiction, assume that there exists a bijection  $\pi'':I^C\rightarrow J^C$ whose weight strictly surpasses the weight of $\pi'$. Then it is decomposed into  paths and cycles. It can be seen that the beginning nodes of the paths compose $I^C\setminus J^C= J\setminus I$ and the end nodes of the paths compose 
$J^C\setminus I^C=I\setminus J$. Completing each path into 
a cycle by a single edge and composing it with disjoint loops yields a permutation. Consider the set of such permutations and $|I\cap J|$ copies of $\Id$ corresponding to the nodes of $I\cap J$ (that 
can be neither beginning nor end nodes of paths in decomposition of $\pi''$). These permutations constitute 
a $(1,k)$-regular multigraph of $G$ with respect to $I,J$. The weight of this multigraph equals the weight of 
$\pi''$, which strictly surpasses the weight of $F$.

\textbf{Case~2:  Paths $P_i$ are not pairwise disjoint (see Figures~\ref{case2a}\ --\ \ref{case2c} and Notation~\ref{N1}\ --\ \ref{N3})}

In this case we will show that (2) occurs by proving that there exists $\tilde{\pi}\in S_{I,J}$ and $\tau_1,\dots,\tau_k\in S_n$ such that the multigraph
$F'=\big([n],\biguplus_{i\in[k]}E_{\tau_i},\Tilde{\pi}\big)$ is $(1,k)$-regular and optimal with respect to $I,J$, and
$$(\biguplus_{i\in[k]}E_{\tau_i})\setminus (E_{\tilde{\pi}}\cup L_{F'})\subseteq (\biguplus_{i\in[k]}E_{\rho_i})\setminus E_\pi.$$

Observe that Case~2 occurs when one of the conditions \ref{(a)}\ --\ \ref{(c)} in Case~1 fails. That is, if at least one of the following conditions holds:
\begin{enumerate}[\ \ \ (a)]
\item\label{(a')}  There exists a source which is also a target, or vice versa (of course the sources are disjoint and the targets are disjoint),

\item\label{(b')} There exists an intermediate node which is also a source or a target,

\item \label{(c')} There exists an intermediate node common to two paths.
\end{enumerate}

We will consider each of these cases separately.

\textbf{ Case~2\ref{(a')}:  There exists a source which is also a target (Figure~\ref{case2a})}

In this case $\exists i,j\in [k]:t(P_j)=s(P_i)$ .

We compose~$P_j\circ P_i$ and get a path~$Q$ such that$$s(Q)=s(P_j)\ ,\ t(Q)=t(P_i).$$
If $Q$ is an elementary path, then composed with the loops  on nodes disjoint to $V(E_Q)$, we get a bijection in $S_{\{t(P_i)\}^c,\{s(P_j)\}^c}$, which can be completed by one edge  into a permutation $\tau$. 
Therefore, taking $\tau_i=\tau\ ,\ \tau_j=\Id$, instead of $\rho_i,\rho_j$, and 
$\tau_\ell=\rho_\ell$ for all $\ell\ne i,j$, 
we obtain the $(1,k)$-regular multigraph 
  $F'=\big([n],\biguplus_{\ell\in[k]}E_{\tau_\ell},\tilde{\pi}\big)$ with respect to $I,J$  in~$G$, where 
$\tilde{\pi}$ is formed from $\pi$ by replacing the edges $(t(P_j),s(P_j))$ and $(t(P_i),s(P_i)$ 
with the edges $(t(P_i),s(P_j))$ and $(t(P_j),s(P_i))$.

Since the set of edges $ \bigcup_{i\in[k]}E_{\rho_i}\setminus E_\pi$ has not changed, 
but was rather rearranged, $F'$ is optimal:
$$\bigg(\sum_{i\in[k]}w(\rho_i)\bigg)-w( \pi)=
\bigg(\sum_{i\in[k]}w(\tau_i)\bigg)-w( \tilde{\pi}).$$
If $Q$ is not elementary, then it includes nontrivial cycles, and therefore its weight is less than or equal to the weight of
the elementary path $Q'$ from $s(Q)$ to $t(Q)$ included   in $Q$, where the cycles of $Q$ are  replaced by loops of nodes disjoint to $V(E_{Q'})$. As a result, the weight of $F$ is less than or equal to
the weight of the $(1,k)$-regular  multigraph  $F'$, where $\tau_i$ is now the elementary path $Q'$ composed with loops disjoint to $V(E_{Q'})$.  In the case when the weight of $F$ is strictly less, we have a contradiction with the optimality of $F$.  In the case when the weights are equal, we have found the desired $F'$.

\begin{center}
\begin{figure}[h]\begin{tikzpicture}
[main_node/.style={->,>=stealth',shorten >=-1pt,shorten <=-1pt,circle,fill=black,minimum size=0.8em,inner sep=1pt,line width=0.3mm},
prl_node/.style={->,>=stealth',shorten >=-1pt,shorten <=-1pt,circle,out=180,in=180,fill=black,minimum size=0.8em,inner sep=1pt,line width=0.5mm},
prr_node/.style={->,>=stealth',shorten >=-1pt,shorten <=-1pt,circle,out=0,in=0,fill=black,minimum size=0.8em,inner sep=1pt,line width=0.5mm},
arrow_node/.style={->,>=stealth',shorten >=-1pt,shorten <=-1pt,fill=black,minimum size=0.8em,inner sep=1pt,line width=0.5mm}]

\node(0) at (-4,-6.6) {$Q$};

\node[main_node](11) at (-7,-7) {};
\node (21) at (-7.6,-7) {$s(P_j)$};

\node[main_node](12) at (-4,-5) {};
\node (22) at (-4,-4.5) {$t(P_j)=s(P_i)$};

\node[main_node](13) at (-1,-7) {};
\node (23) at (-0.4,-7) {$t(P_i)$};

\draw[prr_node,red]   (11) edge node{} (12);
\draw[prl_node,blue]   (12) edge node{} (13);

\node[main_node](h) at (-4,-5.8) {};
\node (23) at (-4,-6.2) {};

\node (22) at (-4,-7.75) {\Large$\lleq$};

\end{tikzpicture}
\begin{tikzpicture}
[main_node/.style={->,>=stealth',shorten >=-1pt,shorten <=-1pt,circle,fill=black,minimum size=0.8em,inner sep=1pt,line width=0.3mm},
prl_node/.style={->,>=stealth',shorten >=-1pt,shorten <=-1pt,circle,out=-45,in=180,fill=black,minimum size=0.8em,inner sep=1pt,line width=0.5mm},
prr_node/.style={-,>=stealth',shorten >=-1pt,shorten <=-1pt,circle,out=0,in=-135,fill=black,minimum size=0.8em,inner sep=1pt,line width=0.5mm},
arrow_node/.style={->,>=stealth',shorten >=-1pt,shorten <=-1pt,fill=black,minimum size=0.8em,inner sep=1pt,line width=0.5mm}]

\node[main_node,white](0) at (-7,-5.5) {};
\node(0) at (-5.5,-6.5) {\Large$\tau$};

\node[main_node](31) at (-7,-7) {};
\node (21) at (-7.6,-7) {$s(P_j)$};

\node (22) at (-4,-7.5) {\Large$\text{composed with disjoint loops}$};

\node[main_node](h12) at (-4,-5.8) {};
\node (h2) at (-4,-6.2) {};

\node[main_node](33) at (-1,-7) {};
\node (23) at (-0.4,-7) {$t(P_i)$};

\draw[prr_node]   (31) edge node{} (h12);
\draw[prl_node]   (h12) edge node{} (33);
\end{tikzpicture}\caption{Case~(2a)}\label{case2a}\end{figure}
\end{center}

\textbf{Case~2\ref{(b')}:  There exists an intermediate node which is also a source or a target (see also Figure~\ref{case2b})}

This case occurs when
$$\exists i\in [k], t\in\overline{s(P_i)}\setminus\{s(P_i)\}:\ t\notin (I^c\cap J^c)\ (\Leftrightarrow t\in I\cup J).$$
Since $\pi\in S_{I,J}$, $\exists j\in [k]$ such that $t=t(P_j)\text{ or }t=s(P_j)\text{ (w.l.o.g. }t=s(P_j)).$  Hence we have, 

$$\exists i,j\in [k], t\in\overline{s(P_i)}:\ t=s(P_j).$$

We assume without loss of generality that Case~2a does not occur.

We compose~$P_i\circ P_j$,  and decompose $P_i$ at $t$, denoted by $Q_1\circ Q_2\circ P_j$   
such that $$s(Q_1)=s(P_i)\ ,\ t(Q_1)=t=s(Q_2)\ ,\ t(Q_2)=t(P_i).$$ We then write the composition as
$(Q_1\circ P_j)\circ Q_2$ where $Q_2$ is elementary, and~$Q_1\circ P_j$ is a path $Q$ such that $$s(Q)=s(P_i),t(Q)=t(P_j).$$ 
As before, if $Q$ is elementary, then composed with the loops  of nodes disjoint to $V(E_Q)$, we get a bijection in $S_{\{t(P_j)\}^c,\{s(P_i)\}^c}$, which can be completed by one edge into a permutation $\tau$. Composing $Q_2$  with the loops  of nodes disjoint to $V(E_{Q_2})$, we get a bijection in $S_{\{t(P_i)\}^c,\{s(P_j)\}^c}$, which can be completed by one edge  into a permutation $\tau'$. 
Therefore, taking $\tau_i=\tau\ ,\ \tau_j=\tau'$, instead of $\rho_i,\rho_j$, and 
$\tau_\ell=\rho_\ell$ for all $\ell\ne i,j$, 
we obtain the $(1,k)$-regular multigraph 
  $F'=\big([n],\biguplus_{\ell\in[k]}E_{\tau_\ell},\tilde{\pi}\big)$  with respect to $I,J$  in~$G$, where 
$\tilde{\pi}$ is formed from $\pi$ by replacing the edges $(t(P_j),s(P_j))$ and $(t(P_i),s(P_i)$ 
with the edges $(t(P_i),s(P_j))$ and $(t(P_j),s(P_i))$.
Since the set of edges $ \bigcup_{i\in[k]}E_{\rho_i}\setminus E_\pi$ has not changed, 
but was rather rearranged, $F'$ is optimal.

If $Q$ is not elementary, then it includes nontrivial cycles, and therefore its weight is  less than or equal to the weight of
the elementary path $Q'$ from $s(Q)$ to $t(Q)$ included   in $Q$, where the cycles of $Q$ are  replaced by loops of nodes disjoint to $V(E_{Q'})$. As a result, the weight of $F$ is less than or equal to
the weight of the $(1,k)$-regular   $F'$ above, where $\tau_i$ is the elementary path $Q'$ composed with loops disjoint to $V(E_{Q'})$.  As in case~2a, the weight of $F$ cannot be less than the weight of $F'$, so we have found the desired $F'$

\begin{center}
\begin{figure}[h]\begin{tikzpicture}
[main_node/.style={->,>=stealth',shorten >=-1pt,shorten <=-1pt,circle,fill=black,minimum size=0.8em,inner sep=1pt,line width=0.5mm},
prl_node/.style={->,>=stealth',shorten >=-1pt,shorten <=-1pt,circle,out=180,in=180,fill=black,minimum size=0.8em,inner sep=1pt,line width=0.5mm},
prr_node/.style={- ,>=stealth',shorten >=-1pt,shorten <=-1pt,circle,out=0,in=0,fill=black,minimum size=0.8em,inner sep=1pt,line width=0.5mm},
arrow_node/.style={->,>=stealth',shorten >=-1pt,shorten <=-1pt,fill=black,minimum size=0.8em,inner sep=1pt,line width=0.5mm}]

\node[main_node](11) at (-7,-7) {};
\node (21) at (-7,-7.5) {$s(P_i)$};

\node[main_node](14) at (-6,-5) {};
\node (24) at (-6.6,-5) {$t(P_i)$};

\node[main_node](12) at (-4,-5) {};
\node (22) at (-3.2,-4.65) {$t=s(P_j)$};

\node[main_node](13) at (-1,-7) {};
\node (23) at (-0.9,-7.5) {$t(P_j)$};

\draw[prr_node,red]   (11) edge node{} (12);
\draw[prl_node,blue]   (12) edge node{} (13);
\draw[main_node,red]   (12) edge node{} (14);
\node (21) at (-3.4,-5.6) {$\color{red}Q_1$};
\node (21) at (-3.4,-6.5) {$\color{blue}P_j$};
\node (21) at (-5,-4.8) {$\color{red}Q_2$};
\node (21) at (-4.6,-6.5) {$\color{red}Q_1$};

\node (eq) at (-0.35,-6) {\Large$=$};

\node[main_node](41) at (1,-7) {};
\node (51) at (1,-7.5) {$s(P_i)$};

\node[main_node](44) at (1.2,-5) {};
\node (54) at (0.7,-5) {$t(P_i)$};

\node[main_node](42) at (4,-5) {};
\node[main_node](421) at (3.2,-5) {};
\node (52) at (4.8,-4.65) {$t=s(P_j)$};

\node[main_node](43) at (7,-7) {};
\node (53) at (7,-7.5) {$t(P_j)$};

\draw[prr_node,red]   (41) edge node{} (42);
\draw[prl_node,blue]   (42) edge node{} (43);
\draw[main_node,red]   (421) edge node{} (44);

\node (51) at (2.2,-4.7) {$Q_2$};
\node (51) at (4,-6.5) {$Q$};

\node[main_node](h) at (-4,-5.8) {};
\node (23) at (-5,-6.2) {};

\node (22) at (4,-8) {\Large$\lleq$};

\node[main_node](117) at (4,-5.8) {};

\end{tikzpicture}
\begin{tikzpicture}
[main_node/.style={->,>=stealth',shorten >=-1pt,shorten <=-1pt,circle,fill=black,minimum size=0.8em,inner sep=1pt,line width=0.5mm},
prl_node/.style={->,>=stealth',shorten >=-1pt,circle,shorten <=-1pt,out=-45,in=180,fill=black,minimum size=0.8em,inner sep=1pt,line width=0.5mm},
prr_node/.style={- ,>=stealth',shorten >=-1pt,circle,shorten <=-1pt,out=0,in=-135,fill=black,minimum size=0.8em,inner sep=1pt,line width=0.5mm},
pr_node/.style={-> ,>=stealth',shorten >=-1pt,circle,shorten <=-1pt,out=0,in=-135,fill=black,minimum size=0.8em,inner sep=1pt,line width=0.5mm},
arrow_node/.style={->,>=stealth',shorten >=-1pt,shorten <=-1pt,fill=black,minimum size=0.8em,inner sep=1pt,line width=0.5mm}]
\node[main_node,color=white](11) at (-7.25,-7) {};

\node[main_node,white](0) at (1.5,-4.5) {};
\node(0) at (3,-6.6) {\Large$\tau$};
\node(0) at (3,-4.7) {\Large$\tau'$};

\node[main_node](31) at (1.5,-7) {};
\node (21) at (1.4,-7.5) {$s(P_i)$};

\node (22) at (4.5,-8) {\Large$\text{composed with  disjoint  loops}$};

\node[main_node](h12) at (4.5,-5.8) {};
\node (h2) at (4.5,-6.2) {};

\node[main_node](33) at (7.5,-7) {};
\node (23) at (7.6,-7.5) {$t(P_j)$};

\draw[prr_node]   (31) edge node{} (h12);
\draw[prl_node]   (h12) edge node{} (33);

\node[main_node](14) at (2,-5) {};
\node (24) at (1.5,-5) {$t(P_i)$};

\node[main_node](12) at (4,-5) {};
\node (22) at (5,-5) {$t=s(P_j)$};
\draw[main_node]   (12) edge node{} (14);

\end{tikzpicture}\caption{Case~(2b)}\label{case2b}\end{figure}
\end{center}

\textbf{Case~2\ref{(c')}:  There exists an intermediate node common to two paths (see Figure~\ref{case2c})}

In this case we have that,  $$\exists  t\in(\overline{s(P_i)}\setminus\{s(P_i)\})\cap(\overline{s(P_j)}
\setminus\{s(P_j)\})\text{ for some }i\ne j.$$

We assume without loss of generality that Cases 2a and 2b do not occur.

Let $P_i=Q_1\circ Q_2$ where $Q_1$ is the segment of $P_i$ between $s(P_i)$ and $t$, and $Q_2$ is the segment from $t$ to $t(P_i)$.

Similarly let $P_j=Q_3\circ Q_4$ where $s(Q_3)=s(P_j)$ ,$s(Q_2)=t=s(Q_4)$ and $t(Q_4)=t(P_j)$.

Then we can write the composition $P_i\circ P_j$ as $(Q_1\circ Q_4)\circ (Q_3\circ Q_2)$ where $$s(Q_1\circ Q_4)=s(P_i)\ ,\ s(Q_3\circ Q_2)=s(P_j)\ ,\ t(Q_1\circ Q_4)=t(P_j)\ ,\ t(Q_3\circ Q_2)=t(P_i).$$ 

Once again, if $Q_1\circ Q_4$ and $ Q_3\circ Q_2$ are elementary, then composed with the loops  of nodes disjoint to $V(E_{Q_1\circ Q_4})$ and $V(E_{Q_3\circ Q_2})$ respectively, we get   bijections in $S_{\{t(P_j)\}^c,\{s(P_i)\}^c}$ and $ S_{\{t(P_i)\}^c,\{s(P_j)\}^c}$, which can be completed, by one edge each, into permutations $\tau,\tau'$. 
Therefore, taking $\tau_i=\tau\ ,\ \tau_j=\tau'$, instead of $\rho_i,\rho_j$, and 
$\tau_\ell=\rho_\ell$ for all $\ell\ne i,j$, 
we obtain the $(1,k)$-regular multigraph 
  $F'=\big([n],\biguplus_{\ell\in[k]}E_{\tau_\ell},\tilde{\pi}\big)$  with respect to $I,J$  in~$G$, where 
$\tilde{\pi}$ is formed from $\pi$ by replacing the edges $(t(P_j),s(P_j))$ and $(t(P_i),s(P_i)$ 
with the edges $(t(P_i),s(P_j))$ and $(t(P_j),s(P_i))$.
Since the set of edges $ \bigcup_{i\in[k]}E_{\rho_i}\setminus E_\pi$ has not changed, 
but was rather rearranged, $F'$ is optimal.

\begin{center}
\begin{figure}[h]\begin{tikzpicture}
[main_node/.style={->,>=stealth',shorten >=-1pt,shorten <=-1pt,circle,fill=black,minimum size=0.8em,inner sep=1pt,line width=0.5mm},
prl_node/.style={- ,>=stealth',shorten >=-1pt,shorten <=-1pt,circle,out=0,in=0,fill=black,minimum size=0.8em,inner sep=1pt,line width=0.5mm},
pl_node/.style={->,>=stealth',shorten >=-1pt,shorten <=-1pt,circle,out=180,in=180,fill=black,minimum size=0.8em,inner sep=1pt,line width=0.5mm},
prr_node/.style={- ,>=stealth',shorten >=-1pt,shorten <=-1pt,circle,out=0,in=0,fill=black,minimum size=0.8em,inner sep=1pt,line width=0.5mm},
pr_node/.style={-> ,>=stealth',shorten >=-1pt,shorten <=-1pt,circle,out=180,in=180,fill=black,minimum size=0.8em,inner sep=1pt,line width=0.5mm},
arrow_node/.style={->,>=stealth',shorten >=-1pt,shorten <=-1pt,fill=black,minimum size=0.8em,inner sep=1pt,line width=0.5mm}]

\node[main_node](11) at (-7,-7) {};
\node (21) at (-7.6,-7) {$s(P_i)$};

\node[main_node](14) at (-6,-3) {};
\node (24) at (-6.6,-3) {$s(P_j)$};

\node[main_node](15) at (-2,-3) {};
\node (25) at (-1.3,-3) {$t(P_i)$};

\node[main_node](12) at (-4,-5) {};
\node (22) at (-4,-4.65) {$t$};

\node[main_node](13) at (-2,-7) {};
\node (23) at (-1.3,-7) {$t(P_j)$};

\draw[prr_node,red]   (11) edge node{} (12);
\draw[pl_node,blue]   (12) edge node{} (13);

\draw[pr_node,red]   (12) edge node{} (15);
\draw[prl_node,blue]   (14) edge node{} (12);

\node[main_node](h) at (-4,-4) {};
\node[main_node](h1) at (-4.1,-5.9) {};

\node (21) at (-4.6,-6.5) {$\color{red}Q_1$};
\node (21) at (-3.4,-5.6) {$\color{red}Q_1$};
\node (21) at (-4.7,-4.6) {$\color{red}Q_2$};
\node (21) at (-3.4,-3.2) {$\color{red}Q_2$};
\node (21) at (-3.3,-4.6) {$\color{blue}Q_3$};
\node (21) at (-4.6,-3.2) {$\color{blue}Q_3$};
\node (21) at (-3,-6.5) {$\color{blue}Q_4$};
\node (21) at (-4.6,-5.6) {$\color{blue}Q_4$};

\node (eq) at (-0.5,-5) {\Large$=$};

\node[main_node](41) at (1,-7) {};
\node (51) at (0.4,-7) {$s(P_i)$};

\node[main_node](44) at (2,-2.5) {};
\node (54) at (1.4,-2.5) {$s(P_j)$};

\node[main_node](45) at (6,-2.5) {};
\node (55) at (6.7,-2.5) {$t(P_i)$};

\node[main_node](42) at (4,-5) {};
\node (52) at (3.7,-4.75) {$t$};
\node[main_node](421) at (4,-4.5) {};

\node[main_node](43) at (6,-7) {};
\node (53) at (6.7,-7) {$t(P_j)$};

\draw[prr_node,red]   (41) edge node{} (42);
\draw[pl_node,blue]   (42) edge node{} (43);

\draw[pr_node,red]   (421) edge node{} (45);
\draw[prl_node,blue]   (44) edge node{} (421);

\node[main_node](t) at (4,-3.5) {};
\node[main_node](t1) at (3.9,-5.9) {};

\node (52) at (4,-7.75) {\Large$\lleq$};

\node (51) at (3.75,-6.75) {$Q_1\circ Q_4$};

\node (51) at (4,-2.8) {$Q_3\circ Q_2$};

\end{tikzpicture}
\begin{tikzpicture}
[main_node/.style={->,>=stealth',shorten >=-1pt,shorten <=-1pt,circle,fill=black,minimum size=0.8em,inner sep=1pt,line width=0.5mm},
pl_node/.style={->,>=stealth',shorten >=-1pt,circle,shorten <=-1pt,out=-45,in=180,fill=black,minimum size=0.8em,inner sep=1pt,line width=0.5mm},
prl_node/.style={-,>=stealth',shorten >=-1pt,circle,shorten <=-1pt,out=135,in=0,fill=black,minimum size=0.8em,inner sep=1pt,line width=0.5mm},
prr_node/.style={- ,>=stealth',shorten >=-1pt,circle,shorten <=-1pt,out=0,in=-135,fill=black,minimum size=0.8em,inner sep=1pt,line width=0.5mm},
pr_node/.style={-> ,>=stealth',shorten >=-1pt,circle,shorten <=-1pt,out=45,in=180,fill=black,minimum size=0.8em,inner sep=1pt,line width=0.5mm},
arrow_node/.style={->,>=stealth',shorten >=-1pt,shorten <=-1pt,fill=black,minimum size=0.8em,inner sep=1pt,line width=0.5mm}]
\node[main_node,color=white](11) at (-6,-7) {};

\node[main_node,white](0) at (2,-4.5) {};
\node(0) at (3.5,-6.6) {\Large$\tau$};
\node(0) at (4.9,-3.5) {\Large$\tau'$};

\node[main_node](31) at (2,-7) {};
\node (21) at (1.4,-7) {$s(P_i)$};

\node (22) at (4.5,-4.9) {\Large$\text{composed with disjoint loops}$};

\node[main_node](h12) at (4.85,-5.8) {};
\node[main_node](h) at (5,-4) {};

\node[main_node](33) at (7,-7) {};
\node (23) at (7.7,-7) {$t(P_j)$};

\node[main_node](14) at (3,-3) {};
\node (24) at (2.4,-3) {$s(P_j)$};

\node[main_node](15) at (7,-3) {};
\node (25) at (7.7,-3) {$t(P_i)$};

\draw[prr_node]   (31) edge node{} (h12);
\draw[pl_node]   (h12) edge node{} (33);
\draw[pr_node]   (h) edge node{} (15);
\draw[prl_node]   (h) edge node{} (14);

\end{tikzpicture}\caption{Case~(2c)}\label{case2c}\end{figure}
\end{center}

If $Q_1\circ Q_4$ (resp.~$Q_3\circ Q_2$) is not elementary, then it includes nontrivial cycles, and therefore its weight is  less than or equal to
the weight of the elementary path $Q'$ from $s(Q_1\circ Q_4)$ (resp.~$s(Q_3\circ Q_2)$) to $t(Q_1\circ Q_4)$ (resp.~$t(Q_3\circ Q_2)$) included   in $Q_1\circ Q_4$ (resp.~$Q_3\circ Q_2$), 
where the cycles of  $Q_1\circ Q_4$ (resp.~$Q_3\circ Q_2$) are  replaced by loops of nodes disjoint to $V(E_{Q'})$. As a result, the weight of $F$ is less than or equal to
the weight of the $(1,k)$-regular   $F'$ above, where $\tau_i$ (resp.~$\tau_j$) is the elementary path $Q'$ composed with loops disjoint to $V(E_{Q'})$.  We conclude in the same way as in Cases 2a and 2b.

Observe that in all subcases of Case~2 we change the set of edges of two permutations in the multigraph,
and therefore $F'\neq F$.

  \end{proof}

\begin{rem}
If, in addition, $\Id$ is the unique optimal permutation then 
every optimal multigraph $F$ has the required property by Lemma~\ref{norm}.
\end{rem}

Let us also formulate a version of the above theorem which applies to any optimal multigraph.

\begin{cor}\label{cor:JacobiGraph}  Let $G=([n],E)$ be a simple weighted digraph
where $\Id$ is an optimal permutation.  
Let $F=\big([n],\biguplus_{i\in[k]}E_{\rho_i},\pi\big)$ be an arbitrary optimal~$(1,k)$-regular multigraph  of~$G$ with respect to 
$I,J\subseteq [n]$.
Then there exists a $k$-regular multigraph~$F'=\big([n],\biguplus_{i\in[k]}E_{\tau_i}\big)$ of~$G$ such that 
at least one of the following statements holds 
\begin{enumerate} 
\item  
$E_\pi\subseteq E_{\tau_\ell}$ for some $\ell\in [k]$, and 
$\pi'\in S_{I^c,J^c}$ defined by $E_{\pi'}=E_{\tau_\ell}\setminus E_\pi $
satisfies $\bigcup_{i\in[k]} E_{P_i}\subseteq E_{\pi'}$ and  
is an optimal bijection (with respect to $I^c, J^c$). 

\item There exists $\tilde{\pi}\in S_{I,J}$ such that
 $F'=\big([n],\biguplus_{i\in[k]}E_{\tau_i},\tilde{\pi}\big)\neq F$ satisfies $(\biguplus_{i\in[k]}E_{\tau_i})\setminus (E_{\tilde{\pi}}\cup L_{F'})\subseteq (\biguplus_{i\in[k]}E_{\rho_i})\setminus E_\pi$ and is also an optimal $(1,k)$-regular multigraph with respect to $I,J$.
\end{enumerate} 
\end{cor}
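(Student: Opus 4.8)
The plan is to reduce the general case to the single-nontrivial-cycle case that Theorem~\ref{thm:JacobiGraph} already handles, by a pruning argument based on Lemma~\ref{norm}. As in the proof of that theorem, I would first scale the rows (or columns) of the matrix associated with $G$ so that, without loss of generality, this matrix is \emph{normalized}: this preserves optimality of every $(1,k)$-regular multigraph and of every bijection in $S_{I,J}$, makes all loops available with weight $\unit$, and thus makes Lemma~\ref{norm} applicable.

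The core step is the following pruning of superfluous cycles. Given an arbitrary optimal multigraph $F=\big([n],\biguplus_{i\in[k]}E_{\rho_i},\pi\big)$, for each $i\in[k]$ let $e_i$ and $C_i$ be as in~\eqref{N1} and~\eqref{N2}, so that $e_i$ is the edge of $E_\pi$ inside $\rho_i$ and $C_i$ is the cycle of $\rho_i$ through $e_i$. Any further nontrivial cycle of $\rho_i$ is disjoint from $C_i$; I would replace every such cycle by the identity permutation on its own node set (i.e.\ by loops), keeping $C_i$ intact, and write $\tilde\rho_i$ for the resulting permutation and $\tilde F=\big([n],\biguplus_{i\in[k]}E_{\tilde\rho_i},\pi\big)$ for the resulting multigraph. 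By construction each $\tilde\rho_i$ now has at most one cycle that is not a loop, namely $C_i$, so $\tilde F$ falls under the hypothesis of Theorem~\ref{thm:JacobiGraph}.

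I would then check that $\tilde F$ is still an optimal $(1,k)$-regular multigraph with respect to $I,J$. Since each $e_i\in C_i$ is untouched, $\pi$ remains the common supervision, so $\tilde F$ is genuinely $(1,k)$-regular. By Lemma~\ref{norm}, replacing a cycle by the corresponding identity does not decrease the weight, hence $w(\tilde\rho_i)\ge w(\rho_i)$ for each $i$; as $w(\pi)$ is unaffected, the base value of $\tilde F$ is at least that of $F$. Optimality of $F$ forces equality, so $\tilde F$ is optimal as well, and is moreover equivalent to $F$ in the sense of Definition~\ref{equiv}.

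Finally I would apply Theorem~\ref{thm:JacobiGraph} to $\tilde F$, obtaining a $k$-regular multigraph $F'$ satisfying the theorem's conclusion (1) or (2) relative to $\tilde F$. Because $C_i$, and hence the elementary paths $P_i$ of~\eqref{N3}, coincide for $F$ and $\tilde F$, the statement involving $\bigcup_{i\in[k]}E_{P_i}$ in (1) transfers unchanged; note that in this case the theorem returns $F'=\tilde F$, which is only equivalent to $F$ and need not equal it, which is exactly why the clause ``$F'=F$'' of the theorem must be dropped in the corollary. The only point needing care is the edge-set inclusion of (2): the theorem gives $(\biguplus_i E_{\tau_i})\setminus(E_{\tilde\pi}\cup L_{F'})\subseteq(\biguplus_i E_{\tilde\rho_i})\setminus E_\pi$, and since every non-loop edge of $\tilde\rho_i$ is a non-loop edge of $\rho_i$, the right-hand side lies in $(\biguplus_i E_{\rho_i})\setminus E_\pi$, which yields the assertion of (2) for the original $F$. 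This bookkeeping, ensuring that pruning disturbs neither the $e_i,C_i,P_i$ nor the stated inclusions, is the main (and only genuine) obstacle.
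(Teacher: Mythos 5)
Your proposal is correct and follows essentially the same route as the paper: the paper's proof likewise invokes Lemma~\ref{norm} to replace, in each $\rho_i$, every cycle other than $C_i$ by loops (preserving optimality of the resulting multigraph, which then has at most one non-loop cycle per permutation) and then derives both alternatives from the corresponding alternatives of Theorem~\ref{thm:JacobiGraph}. Your write-up merely makes explicit the bookkeeping the paper leaves implicit --- the normalization step, the equality of base values forced by optimality, and the transfer of the edge-set inclusion in alternative (2) back to the original $F$.
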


\begin{proof}
Using Lemma~\ref{norm}, multigraph $F$ can be replaced with a multigraph with $\rho_i$
having the same cycles $C_i$ as in $F$ and with all other cycles being loops, maintaining the optimality. 
Then all properties of both alternatives follow from the corresponding alternatives in
Theorem~\ref{thm:JacobiGraph}.

\end{proof}

\begin{rem}\label{nnrm} (On the General Case)
Identity is not an optimal permutation for
a general matrix, but any optimal permutation can be relocated to the
diagonal by permuting columns (or rows). 
Let us consider this process on the digraph $G_A$  corresponding to $A\in\rmax^{n\times n}$, 
and furthermore on the 
bipartite graph corresponding to $G_A$. Note that the permutations are interpreted as assignments
matching $n$ workers to $n$ tasks. 
In that sense, one has the freedom of renumbering the tasks. 
If assigning worker $i$ to task $\pi(i)$ for every $i\in[n]$ is
 an optimal assignment, then one can count task $\pi(i)$ as task $i$.
This is easy to do once an optimal permutation is known, hence the condition that $\Id$ should be 
optimal is not restrictive in practice.
\end{rem}

\if{
\begin{rem}\label{nnrm}(On the General Case) 
Recalling Lemma~\ref{norm}(2), any matrix 
$A\in \R_{\max}^{n\times n}$ whose digraph contains at least one permutation with a finite weight can be normalized by relocating its optimal permutation $\pi$ to the diagonal and normalizing its
 entries $A_{i,\pi(i)}$ to $\unit$ (just subtracting $A_{i,\pi(i)}$ from each entry of the 
column in the sense of max-plus arithmetic). 

Let us consider this process on the digraph $G_A$  corresponding to $A$, and further more on the 
bipartite graph corresponding to $G_A$. Note that the permutations are interpreted as assignments
 matching $n$ workers to 
$n$ tasks. 
In that sense, one has the freedom of renumbering the tasks. 
If assigning worker $i$ to task $\pi(i)$ for every $i\in[n]$ is
 an optimal assignment, then one can count task $\pi(i)$ as task $i$. The value normalization is a well known formality, which is 
often  preformed in grading, prioritizing and scoring processes. That is, the weight optimality  is taken for the overall assignment, 
and the edge weights are only considered as part of assignments. Therefore the value of $i$ performing task $j$ can be normalized according to $i$ preforming task $\pi(i)$ (or job $i$)..

  We conclude that one may adjust $G$ in Theorem~\ref{thm:JacobiGraph} to be free of the restrictions in Notation~\ref{thmnota}.
 We claim  however, that for the purposes of the graph-theoretic Jacobi identity, the optimal permutation is essentially the identity, 
and the assignment values should be straighten  according to the desired one.
\end{rem}
}\fi

\subsection{Example showing both cases of tropical Jacobi identity in terms of assignments and supervisions}

Here we consider how a set of $k$ assignments with supervisions $I$ on $J$ links to the cases observed in Theorems \ref{thm:JacobiIdentity}and \ref{thm:JacobiGraph}.

\newsavebox{\adjAtwotwo}
\begin{lrbox}{\adjAtwotwo}
\begin {tikzpicture}[node distance =0.2cm and 0.5cm ,on grid ,semithick, Vertex/.style ={circle, inner sep=1pt, fill}]
\node[Vertex] (A1){};\node[Vertex] (A2) [below =of A1] {};\node[Vertex] (A3) [below =of A2] {};\node[Vertex] (A4)[below =of A3] {};
\node[Vertex] (B1)[right =of A1] {};\node[Vertex] (B2) [below =of B1] {};\node[Vertex] (B3) [below =of B2] {};\node[Vertex] (B4)[below =of B3] {};
\draw (A1) to (B1)   (A3) to (B3) (A4) to (B4);\end{tikzpicture}\end{lrbox}
\newsavebox{\adjAthreethree}
\begin{lrbox}{\adjAthreethree}
\begin {tikzpicture}[node distance =0.2cm and 0.5cm ,on grid ,semithick, Vertex/.style ={circle, inner sep=1pt, fill}]
\node[Vertex] (A1){};\node[Vertex] (A2) [below =of A1] {};\node[Vertex] (A3) [below =of A2] {};\node[Vertex] (A4)[below =of A3] {};
\node[Vertex] (B1)[right =of A1] {};\node[Vertex] (B2) [below =of B1] {};\node[Vertex] (B3) [below =of B2] {};\node[Vertex] (B4)[below =of B3] {};
\draw (A1) to (B1) (A2) to (B2) (A4) to (B4);\end{tikzpicture}\end{lrbox} 
\newsavebox{\adjAfourone}
\begin{lrbox}{\adjAfourone}
\begin {tikzpicture}[node distance =0.2cm and 0.5cm ,on grid ,semithick, Vertex/.style ={circle, inner sep=1pt, fill}]
\node[Vertex] (A1){};\node[Vertex] (A2) [below =of A1] {};\node[Vertex] (A3) [below =of A2] {};\node[Vertex] (A4)[below =of A3] {};
\node[Vertex] (B1)[right =of A1] {};\node[Vertex] (B2) [below =of B1] {};\node[Vertex] (B3) [below =of B2] {};\node[Vertex] (B4)[below =of B3] {};
\draw  (A2) to (B2)  (A3) to (B3) (A4) to (B1);\end{tikzpicture}\end{lrbox}

\begin{exa} $$A=\begin{pmatrix}0&-1&-5&-4\\-6&0&-2&-1\\-3&-4&0&-3\\-2&-7&0&0\end{pmatrix}.$$  We calculate $\adj(A)$ and also show the table of bijections (in $A$) corresponding to each entry of the adjoint. 
$$\adj(A)=\begin{pmatrix}0&-1&-2&-2\\-3&0&-1&-1\\-3&-4^{\bullet}&0&-3\\-2&-3&0&0\end{pmatrix} \leftrightarrow
\left(\begin{array}{c c c c}
\begin {tikzpicture}[node distance =0.2cm and 0.5cm ,on grid ,semithick, Vertex/.style ={circle, inner sep=1pt, fill}]
\node[Vertex] (A1){};\node[Vertex] (A2) [below =of A1] {};\node[Vertex] (A3) [below =of A2] {};\node[Vertex] (A4)[below =of A3] {};
\node[Vertex] (B1)[right =of A1] {};\node[Vertex] (B2) [below =of B1] {};\node[Vertex] (B3) [below =of B2] {};\node[Vertex] (B4)[below =of B3] {};
\draw (A2) to (B2)  (A3) to (B3) (A4) to (B4);\end{tikzpicture}
&
\begin {tikzpicture}[node distance =0.2cm and 0.5cm ,on grid ,semithick, Vertex/.style ={circle, inner sep=1pt, fill}]
\node[Vertex] (A1){};\node[Vertex] (A2) [below =of A1] {};\node[Vertex] (A3) [below =of A2] {};\node[Vertex] (A4)[below =of A3] {};
\node[Vertex] (B1)[right =of A1] {};\node[Vertex] (B2) [below =of B1] {};\node[Vertex] (B3) [below =of B2] {};\node[Vertex] (B4)[below =of B3] {};
\draw (A1) to (B2) (A3) to (B3) (A4) to (B4);\end{tikzpicture}
&
\begin {tikzpicture}[node distance =0.2cm and 0.5cm ,on grid ,semithick, Vertex/.style ={circle, inner sep=1pt, fill}]
\node[Vertex] (A1){};\node[Vertex] (A2) [below =of A1] {};\node[Vertex] (A3) [below =of A2] {};\node[Vertex] (A4)[below =of A3] {};
\node[Vertex] (B1)[right =of A1] {};\node[Vertex] (B2) [below =of B1] {};\node[Vertex] (B3) [below =of B2] {};\node[Vertex] (B4)[below =of B3] {};
\draw (A1) to (B2) (A2) to (B4)  (A4) to (B3);\end{tikzpicture}
&
\begin {tikzpicture}[node distance =0.2cm and 0.5cm ,on grid ,semithick, Vertex/.style ={circle, inner sep=1pt, fill}]
\node[Vertex] (A1){};\node[Vertex] (A2) [below =of A1] {};\node[Vertex] (A3) [below =of A2] {};\node[Vertex] (A4)[below =of A3] {};
\node[Vertex] (B1)[right =of A1] {};\node[Vertex] (B2) [below =of B1] {};\node[Vertex] (B3) [below =of B2] {};\node[Vertex] (B4)[below =of B3] {};
\draw (A1) to (B2) (A2) to (B4)  (A3) to (B3);\end{tikzpicture}
\\ 
\begin {tikzpicture}[node distance =0.2cm and 0.5cm ,on grid ,semithick, Vertex/.style ={circle, inner sep=1pt, fill}]
\node[Vertex] (A1){};\node[Vertex] (A2) [below =of A1] {};\node[Vertex] (A3) [below =of A2] {};\node[Vertex] (A4)[below =of A3] {};
\node[Vertex] (B1)[right =of A1] {};\node[Vertex] (B2) [below =of B1] {};\node[Vertex] (B3) [below =of B2] {};\node[Vertex] (B4)[below =of B3] {};
\draw  (A2) to (B4)  (A3) to (B3) (A4) to (B1);\end{tikzpicture}
&
\usebox{\adjAtwotwo}
&
\begin {tikzpicture}[node distance =0.2cm and 0.5cm ,on grid ,semithick, Vertex/.style ={circle, inner sep=1pt, fill}]
\node[Vertex] (A1){};\node[Vertex] (A2) [below =of A1] {};\node[Vertex] (A3) [below =of A2] {};\node[Vertex] (A4)[below =of A3] {};
\node[Vertex] (B1)[right =of A1] {};\node[Vertex] (B2) [below =of B1] {};\node[Vertex] (B3) [below =of B2] {};\node[Vertex] (B4)[below =of B3] {};
\draw (A1) to (B1) (A2) to (B4) (A4) to (B3);\end{tikzpicture}
&
\begin {tikzpicture}[node distance =0.2cm and 0.5cm ,on grid ,semithick, Vertex/.style ={circle, inner sep=1pt, fill}]
\node[Vertex] (A1){};\node[Vertex] (A2) [below =of A1] {};\node[Vertex] (A3) [below =of A2] {};\node[Vertex] (A4)[below =of A3] {};
\node[Vertex] (B1)[right =of A1] {};\node[Vertex] (B2) [below =of B1] {};\node[Vertex] (B3) [below =of B2] {};\node[Vertex] (B4)[below =of B3] {};
\draw (A1) to (B1) (A2) to (B4)  (A3) to (B3);\end{tikzpicture}
\\ 
\begin {tikzpicture}[node distance =0.2cm and 0.5cm ,on grid ,semithick, Vertex/.style ={circle, inner sep=1pt, fill}]
\node[Vertex] (A1){};\node[Vertex] (A2) [below =of A1] {};\node[Vertex] (A3) [below =of A2] {};\node[Vertex] (A4)[below =of A3] {};
\node[Vertex] (B1)[right =of A1] {};\node[Vertex] (B2) [below =of B1] {};\node[Vertex] (B3) [below =of B2] {};\node[Vertex] (B4)[below =of B3] {};
\draw  (A2) to (B2)  (A3) to (B1) (A4) to (B4);\end{tikzpicture}
&
\begin {tikzpicture}[node distance =0.2cm and 0.5cm ,on grid ,semithick, Vertex/.style ={circle, inner sep=1pt, fill}]
\node[Vertex] (A1){};\node[Vertex] (A2) [below =of A1] {};\node[Vertex] (A3) [below =of A2] {};\node[Vertex] (A4)[below =of A3] {};
\node[Vertex] (B1)[right =of A1] {};\node[Vertex] (B2) [below =of B1] {};\node[Vertex] (B3) [below =of B2] {};\node[Vertex] (B4)[below =of B3] {};
\draw (A1) to (B2)   (A3) to (B1) (A4) to (B4);\end{tikzpicture}\text{ or }
\begin {tikzpicture}[node distance =0.2cm and 0.5cm ,on grid ,semithick, Vertex/.style ={circle, inner sep=1pt, fill}]
\node[Vertex] (A1){};\node[Vertex] (A2) [below =of A1] {};\node[Vertex] (A3) [below =of A2] {};\node[Vertex] (A4)[below =of A3] {};
\node[Vertex] (B1)[right =of A1] {};\node[Vertex] (B2) [below =of B1] {};\node[Vertex] (B3) [below =of B2] {};\node[Vertex] (B4)[below =of B3] {};
\draw (A1) to (B1)   (A3) to (B2) (A4) to (B4);\end{tikzpicture}
&
\usebox{\adjAthreethree}
&
\begin {tikzpicture}[node distance =0.2cm and 0.5cm ,on grid ,semithick, Vertex/.style ={circle, inner sep=1pt, fill}]
\node[Vertex] (A1){};\node[Vertex] (A2) [below =of A1] {};\node[Vertex] (A3) [below =of A2] {};\node[Vertex] (A4)[below =of A3] {};
\node[Vertex] (B1)[right =of A1] {};\node[Vertex] (B2) [below =of B1] {};\node[Vertex] (B3) [below =of B2] {};\node[Vertex] (B4)[below =of B3] {};
\draw (A1) to (B1) (A2) to (B2)  (A3) to (B4);\end{tikzpicture}
\\  
\usebox{\adjAfourone}
&
\begin {tikzpicture}[node distance =0.2cm and 0.5cm ,on grid ,semithick, Vertex/.style ={circle, inner sep=1pt, fill}]
\node[Vertex] (A1){};\node[Vertex] (A2) [below =of A1] {};\node[Vertex] (A3) [below =of A2] {};\node[Vertex] (A4)[below =of A3] {};
\node[Vertex] (B1)[right =of A1] {};\node[Vertex] (B2) [below =of B1] {};\node[Vertex] (B3) [below =of B2] {};\node[Vertex] (B4)[below =of B3] {};
\draw (A1) to (B2)   (A3) to (B3) (A4) to (B1);\end{tikzpicture}
&
\begin {tikzpicture}[node distance =0.2cm and 0.5cm ,on grid ,semithick, Vertex/.style ={circle, inner sep=1pt, fill}]
\node[Vertex] (A1){};\node[Vertex] (A2) [below =of A1] {};\node[Vertex] (A3) [below =of A2] {};\node[Vertex] (A4)[below =of A3] {};
\node[Vertex] (B1)[right =of A1] {};\node[Vertex] (B2) [below =of B1] {};\node[Vertex] (B3) [below =of B2] {};\node[Vertex] (B4)[below =of B3] {};
\draw (A1) to (B1) (A2) to (B2) (A4) to (B3);\end{tikzpicture}
&
\begin {tikzpicture}[node distance =0.2cm and 0.5cm ,on grid ,semithick, Vertex/.style ={circle, inner sep=1pt, fill}]
\node[Vertex] (A1){};\node[Vertex] (A2) [below =of A1] {};\node[Vertex] (A3) [below =of A2] {};\node[Vertex] (A4)[below =of A3] {};
\node[Vertex] (B1)[right =of A1] {};\node[Vertex] (B2) [below =of B1] {};\node[Vertex] (B3) [below =of B2] {};\node[Vertex] (B4)[below =of B3] {};
\draw (A1) to (B1) (A2) to (B2)  (A3) to (B3);\end{tikzpicture}
\end{array}\right).$$

\underline{Theorem \ref{thm:JacobiGraph} Case~1}  $\adj(A)^{\wedge 3}_{\{2,3,4\}, \{1,2,3\}}=-2=a_{41}=A^{\wedge 1}_{\{4\}\{1\}}$ thus the tropical Jacobi identity holds with equality for this choice of $I$ and $J$.  

In this case there is only one set  of $3$ assignments with supervisions $I$ on $J$ which achieves the maximum base value.
This value $\adj(A)^{\wedge 3}_{\{2,3,4\}, \{1,2,3\}}$ is attained for the bijection (in $\adj(A)$) $(2,2),(3,3),(4,1)$ which corresponds to the following three bijections (in $A$), that is, three assignments with supervisions:  

\begin{center}\usebox{\adjAtwotwo} \ \usebox{\adjAthreethree} \ \usebox{\adjAfourone}\end{center}

We can rearrange the solid edges so that we have $k-1=2$ full permutations in $S_4$, and a bijection representing the optimal permutation of $A^{\wedge n-k}$ which can be completed to a full permutation by adding the edges representing supervisions.  Indeed, in the figure below we have, on the left, the 3 assignments with supervisions described by the bijections.  On the right we rearrange the edges (note that they both represent the same digraph, and have the same value).

\begin{center}
\begin {tikzpicture}[node distance =0.3cm and 0.8cm ,on grid ,semithick, Vertex/.style ={circle, inner sep=1pt, fill}]
\node[Vertex] (A1){};\node[Vertex] (A2) [below =of A1] {};\node[Vertex] (A3) [below =of A2] {};\node[Vertex] (A4)[below =of A3] {};
\node[Vertex] (B1)[right =of A1] {};\node[Vertex] (B2) [below =of B1] {};\node[Vertex] (B3) [below =of B2] {};\node[Vertex] (B4)[below =of B3] {};
\draw (A1) to (B1)   (A3) to (B3) (A4) to (B4);
\draw[red, dotted] (A2) to (B2);\end{tikzpicture}
\begin {tikzpicture}[node distance =0.3cm and 0.8cm ,on grid ,semithick, Vertex/.style ={circle, inner sep=1pt, fill}]
\node[Vertex] (A1){};\node[Vertex] (A2) [below =of A1] {};\node[Vertex] (A3) [below =of A2] {};\node[Vertex] (A4)[below =of A3] {};
\node[Vertex] (B1)[right =of A1] {};\node[Vertex] (B2) [below =of B1] {};\node[Vertex] (B3) [below =of B2] {};\node[Vertex] (B4)[below =of B3] {};
\draw (A1) to (B1) (A2) to (B2) (A4) to (B4);
\draw[red, dotted] (A3) to (B3);\end{tikzpicture}
\begin {tikzpicture}[node distance =0.3cm and 0.8cm ,on grid ,semithick, Vertex/.style ={circle, inner sep=1pt, fill}]
\node[Vertex] (A1){};\node[Vertex] (A2) [below =of A1] {};\node[Vertex] (A3) [below =of A2] {};\node[Vertex] (A4)[below =of A3] {};
\node[Vertex] (B1)[right =of A1] {};\node[Vertex] (B2) [below =of B1] {};\node[Vertex] (B3) [below =of B2] {};\node[Vertex] (B4)[below =of B3] {};
\draw  (A2) to (B2)  (A3) to (B3) (A4) to (B1);
\draw[red, dotted] (A1) to (B4);\end{tikzpicture}
\ $\rightarrow$ \ 
\begin {tikzpicture}[node distance =0.3cm and 0.8cm ,on grid ,semithick, Vertex/.style ={circle, inner sep=1pt, fill}]
\node[Vertex] (A1){};\node[Vertex] (A2) [below =of A1] {};\node[Vertex] (A3) [below =of A2] {};\node[Vertex] (A4)[below =of A3] {};
\node[Vertex] (B1)[right =of A1] {};\node[Vertex] (B2) [below =of B1] {};\node[Vertex] (B3) [below =of B2] {};\node[Vertex] (B4)[below =of B3] {};
\draw (A1) to (B1)   (A3) to (B3) (A4) to (B4) (A2) to (B2);\end{tikzpicture}
\begin {tikzpicture}[node distance =0.3cm and 0.8cm ,on grid ,semithick, Vertex/.style ={circle, inner sep=1pt, fill}]
\node[Vertex] (A1){};\node[Vertex] (A2) [below =of A1] {};\node[Vertex] (A3) [below =of A2] {};\node[Vertex] (A4)[below =of A3] {};
\node[Vertex] (B1)[right =of A1] {};\node[Vertex] (B2) [below =of B1] {};\node[Vertex] (B3) [below =of B2] {};\node[Vertex] (B4)[below =of B3] {};
\draw (A1) to (B1)   (A3) to (B3) (A4) to (B4) (A2) to (B2);\end{tikzpicture}
\begin {tikzpicture}[node distance =0.3cm and 0.8cm ,on grid ,semithick, Vertex/.style ={circle, inner sep=1pt, fill}]
\node[Vertex] (A1){};\node[Vertex] (A2) [below =of A1] {};\node[Vertex] (A3) [below =of A2] {};\node[Vertex] (A4)[below =of A3] {};
\node[Vertex] (B1)[right =of A1] {};\node[Vertex] (B2) [below =of B1] {};\node[Vertex] (B3) [below =of B2] {};\node[Vertex] (B4)[below =of B3] {};
\draw (A4) to (B1);
\draw[red, dotted] (A1) to (B4) (A2) to (B2)  (A3) to (B3) ;\end{tikzpicture}\end{center}

By rearranging the edges in this way, we have found that the base value of the $k$ assignments (the sum of the weights of the solid assignments on the left hand side) is equal to the sum of the value of two identity permutations (which achieve the value of $per(A)$) and one bijection from $J^C=\{4\}$ to $I^C=\{1\}$ (which reflects the value of $A^{\wedge (n-k)}_{J^C, I^C}$). 
Further, in this case, the supervised edges are always constitute a bijection $\sigma$ from $J$ to $I$  which is the complement to the bijection $\tau$ from $J^C$ to $I^C$ which attains the value of $A^{\wedge (n-k)}_{J^C, I^C}$ in the sense that together $\sigma$ and $\tau$ form a permutation in $S_n$.

\underline{ Case~2a in the proof of Theorem~\ref{thm:JacobiGraph}}  For $I=\{1,2,3 \}$ and $J=\{1,3,4 \}$ we have $$\adj(A)^{\wedge 3}_{J,I}=-3^{\bullet} >A^{\wedge 1}_{I^C,J^C}=A_{4,2}=-7.$$ This is attained by two bijections (in $\adj(A)$), $(1,2),(3,3),(4,1)$ and $(1,1),(3,3),(4,2),$ These represent, in $A$, the following choices for 3 assignments with supervisions.  Note that the supervisions change, but the edges representing the assignments are only rearranged.

\begin{center}
\begin {tikzpicture}[node distance =0.3cm and 0.8cm ,on grid ,semithick, Vertex/.style ={circle, inner sep=1pt, fill}]
\node[Vertex] (A1){};\node[Vertex] (A2) [below =of A1] {};\node[Vertex] (A3) [below =of A2] {};\node[Vertex] (A4)[below =of A3] {};
\node[Vertex] (B1)[right =of A1] {};\node[Vertex] (B2) [below =of B1] {};\node[Vertex] (B3) [below =of B2] {};\node[Vertex] (B4)[below =of B3] {};
\draw  (A1) to (B2)  (A3) to (B3) (A4) to (B4);
\draw[red, dotted] (A2) to (B1);\end{tikzpicture}
\begin {tikzpicture}[node distance =0.3cm and 0.8cm ,on grid ,semithick, Vertex/.style ={circle, inner sep=1pt, fill}]
\node[Vertex] (A1){};\node[Vertex] (A2) [below =of A1] {};\node[Vertex] (A3) [below =of A2] {};\node[Vertex] (A4)[below =of A3] {};
\node[Vertex] (B1)[right =of A1] {};\node[Vertex] (B2) [below =of B1] {};\node[Vertex] (B3) [below =of B2] {};\node[Vertex] (B4)[below =of B3] {};
\draw (A1) to (B1) (A2) to (B2)  (A4) to (B4);
\draw[red, dotted] (A3) to (B3);\end{tikzpicture}
\begin {tikzpicture}[node distance =0.3cm and 0.8cm ,on grid ,semithick, Vertex/.style ={circle, inner sep=1pt, fill}]
\node[Vertex] (A1){};\node[Vertex] (A2) [below =of A1] {};\node[Vertex] (A3) [below =of A2] {};\node[Vertex] (A4)[below =of A3] {};
\node[Vertex] (B1)[right =of A1] {};\node[Vertex] (B2) [below =of B1] {};\node[Vertex] (B3) [below =of B2] {};\node[Vertex] (B4)[below =of B3] {};
\draw (A4) to (B1) (A2) to (B2)  (A3) to (B3);
\draw[red, dotted] (A1) to (B4);\end{tikzpicture}
\ \text{ and } \ 
\begin {tikzpicture}[node distance =0.3cm and 0.8cm ,on grid ,semithick, Vertex/.style ={circle, inner sep=1pt, fill}]
\node[Vertex] (A1){};\node[Vertex] (A2) [below =of A1] {};\node[Vertex] (A3) [below =of A2] {};\node[Vertex] (A4)[below =of A3] {};
\node[Vertex] (B1)[right =of A1] {};\node[Vertex] (B2) [below =of B1] {};\node[Vertex] (B3) [below =of B2] {};\node[Vertex] (B4)[below =of B3] {};
\draw (A2) to (B2)  (A3) to (B3) (A4) to (B4);
\draw[red, dotted] (A1) to (B1);\end{tikzpicture}
\begin {tikzpicture}[node distance =0.3cm and 0.8cm ,on grid ,semithick, Vertex/.style ={circle, inner sep=1pt, fill}]
\node[Vertex] (A1){};\node[Vertex] (A2) [below =of A1] {};\node[Vertex] (A3) [below =of A2] {};\node[Vertex] (A4)[below =of A3] {};
\node[Vertex] (B1)[right =of A1] {};\node[Vertex] (B2) [below =of B1] {};\node[Vertex] (B3) [below =of B2] {};\node[Vertex] (B4)[below =of B3] {};
\draw (A1) to (B1) (A2) to (B2)  (A4) to (B4);
\draw[red, dotted] (A3) to (B3);\end{tikzpicture}
\begin {tikzpicture}[node distance =0.3cm and 0.8cm ,on grid ,semithick, Vertex/.style ={circle, inner sep=1pt, fill}]
\node[Vertex] (A1){};\node[Vertex] (A2) [below =of A1] {};\node[Vertex] (A3) [below =of A2] {};\node[Vertex] (A4)[below =of A3] {};
\node[Vertex] (B1)[right =of A1] {};\node[Vertex] (B2) [below =of B1] {};\node[Vertex] (B3) [below =of B2] {};\node[Vertex] (B4)[below =of B3] {};
\draw (A1) to (B2) (A4) to (B1)  (A3) to (B3);
\draw[red, dotted] (A2) to (B4);\end{tikzpicture}\end{center}

This is what was expected from Theorem \ref{thm:JacobiGraph};  that the daily assignments of workers to jobs which are not a supervised assignment (the solid edges in the above) can be swapped between days to give another set of $k$ assignments with the same base value but a different set of supervisions $I$ on $J$.

\underline{Case~2b in the proof of Theorem~\ref{thm:JacobiGraph}}
Finally, it can be verified that, for $I=\{1,2\}$ and $J=\{3,4\}$ we have
 $$\adj(A)^{\wedge 2}_{J, I}=-6^{\bullet}=(\adj(A)_{3,1}\adj(A)_{4,2})\oplus(\adj(A)_{3,2}\adj(A)_{4,1})$$ and $A^{\wedge 2}_{I^C, J^C}=-6=A_{3,2}A_{4,1}.$  In this case  
  $\adj(A)^{\wedge 2}_{J, I}$ is attained twice and equality holds in the tropical Jacobi identity.  There are three sets of 2 assignments obtaining the optimal base value in this case, shown below.  The final one can be rearranged into an identity permutation, and the edges $(3,2)$ and $(4,1)$ which reflect the bijection attaining the value of $A^{\wedge 2}_{I^C, J^C}$.

\begin{center}
\begin {tikzpicture}[node distance =0.3cm and 0.8cm ,on grid ,semithick, Vertex/.style ={circle, inner sep=1pt, fill}]
\node[Vertex] (A1){};\node[Vertex] (A2) [below =of A1] {};\node[Vertex] (A3) [below =of A2] {};\node[Vertex] (A4)[below =of A3] {};
\node[Vertex] (B1)[right =of A1] {};\node[Vertex] (B2) [below =of B1] {};\node[Vertex] (B3) [below =of B2] {};\node[Vertex] (B4)[below =of B3] {};
\draw  (A2) to (B2)  (A3) to (B1) (A4) to (B4);
\draw[red, dotted] (A1) to (B3);\end{tikzpicture}
\begin {tikzpicture}[node distance =0.3cm and 0.8cm ,on grid ,semithick, Vertex/.style ={circle, inner sep=1pt, fill}]
\node[Vertex] (A1){};\node[Vertex] (A2) [below =of A1] {};\node[Vertex] (A3) [below =of A2] {};\node[Vertex] (A4)[below =of A3] {};
\node[Vertex] (B1)[right =of A1] {};\node[Vertex] (B2) [below =of B1] {};\node[Vertex] (B3) [below =of B2] {};\node[Vertex] (B4)[below =of B3] {};
\draw (A1) to (B2) (A3) to (B3)  (A4) to (B1);
\draw[red, dotted] (A2) to (B4);\end{tikzpicture} \text{ and }
\begin {tikzpicture}[node distance =0.3cm and 0.8cm ,on grid ,semithick, Vertex/.style ={circle, inner sep=1pt, fill}]
\node[Vertex] (A1){};\node[Vertex] (A2) [below =of A1] {};\node[Vertex] (A3) [below =of A2] {};\node[Vertex] (A4)[below =of A3] {};
\node[Vertex] (B1)[right =of A1] {};\node[Vertex] (B2) [below =of B1] {};\node[Vertex] (B3) [below =of B2] {};\node[Vertex] (B4)[below =of B3] {};
\draw (A1) to (B2) (A3) to (B1)  (A4) to (B4);
\draw[red, dotted] (A2) to (B3);\end{tikzpicture}
\begin {tikzpicture}[node distance =0.3cm and 0.8cm ,on grid ,semithick, Vertex/.style ={circle, inner sep=1pt, fill}]
\node[Vertex] (A1){};\node[Vertex] (A2) [below =of A1] {};\node[Vertex] (A3) [below =of A2] {};\node[Vertex] (A4)[below =of A3] {};
\node[Vertex] (B1)[right =of A1] {};\node[Vertex] (B2) [below =of B1] {};\node[Vertex] (B3) [below =of B2] {};\node[Vertex] (B4)[below =of B3] {};
\draw (A2) to (B2)  (A3) to (B3) (A4) to (B1);
\draw[red, dotted] (A1) to (B4);\end{tikzpicture}
\ \text{ and } \ 
\begin {tikzpicture}[node distance =0.3cm and 0.8cm ,on grid ,semithick, Vertex/.style ={circle, inner sep=1pt, fill}]
\node[Vertex] (A1){};\node[Vertex] (A2) [below =of A1] {};\node[Vertex] (A3) [below =of A2] {};\node[Vertex] (A4)[below =of A3] {};
\node[Vertex] (B1)[right =of A1] {};\node[Vertex] (B2) [below =of B1] {};\node[Vertex] (B3) [below =of B2] {};\node[Vertex] (B4)[below =of B3] {};
\draw (A1) to (B1) (A3) to (B2)  (A4) to (B4);
\draw[red, dotted] (A2) to (B3);\end{tikzpicture}
\begin {tikzpicture}[node distance =0.3cm and 0.8cm ,on grid ,semithick, Vertex/.style ={circle, inner sep=1pt, fill}]
\node[Vertex] (A1){};\node[Vertex] (A2) [below =of A1] {};\node[Vertex] (A3) [below =of A2] {};\node[Vertex] (A4)[below =of A3] {};
\node[Vertex] (B1)[right =of A1] {};\node[Vertex] (B2) [below =of B1] {};\node[Vertex] (B3) [below =of B2] {};\node[Vertex] (B4)[below =of B3] {};
\draw (A2) to (B2)  (A3) to (B3) (A4) to (B1);
\draw[red, dotted] (A1) to (B4);\end{tikzpicture}\end{center}
\end{exa}

\subsection{The Jacobi identity and assignments with supervisions}\label{Sec:Jacobi.and.Assignment}
Tropical Jacobi tells us that either more than one set of bijections corresponding to optimal base value ($=\adj(A)^{\wedge k}_{J,I}$) of $k$ assignments with supervisions $I$ on $J$ or, it is sufficient to calculate the optimal bijection on $A[[n]-I, [n]-J]$.

Using the same method as  the proof of Theorem \ref{thm:JacobiGraph}, we get the following.

\begin{pro}\label{Prop:JacobiEqualityFindAssignment} If equality holds in the tropical Jacobi identity, then
 an optimal set of $k$ assignments with supervisions can be found in $\mathcal{O}(n^3)$ time.
\end{pro}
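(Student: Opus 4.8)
The plan is to reverse the construction used in Case~1 of the proof of Theorem~\ref{thm:JacobiGraph}. Equality in the tropical Jacobi identity means precisely that $\adj(A)^{\wedge k}_{J,I}=A^{\wedge n-k}_{I^c,J^c}$ once an optimal permutation has been relocated to the diagonal (so that $\per(A)=\unit$); that is, the optimal base value of the $k$ supervised assignments equals the weight of an optimal bijection on the complementary submatrix $A[I^c,J^c]$. Hence, instead of forming the whole adjoint (which would cost $O(k^2n^3)$), it suffices to solve a single $(n-k)\times(n-k)$ assignment problem and then reconstruct the assignments and the supervision from its solution.

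First I would normalize: one run of the Hungarian algorithm ($O(n^3)$) produces an optimal permutation of $A$, which I relocate to the diagonal by permuting columns so that $\Id$ becomes optimal and all diagonal entries equal $\unit$. By the scaling argument at the start of the proof of Theorem~\ref{thm:JacobiGraph} this alters neither the set of optimal supervised assignments nor the validity of the identity, and it is anyway the standing assumption of this subsection. Next I would solve the assignment problem on $A[I^c,J^c]$, again in $O((n-k)^3)\le O(n^3)$ time, obtaining an optimal bijection $\tau\in S_{I^c,J^c}$ of weight $w(\tau)=A^{\wedge n-k}_{I^c,J^c}$.

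The reconstruction then reverses Case~1. I decompose $\tau$ into its cycles (supported on $I^c\cap J^c$) and its elementary paths, each running from a source $s\in J\setminus I$ to a target $t\in I\setminus J$. I define the supervision $\sigma\colon I\to J$ by closing every such path with the edge $(t,s)$, setting $\sigma(t)=s$, and by setting $\sigma(i)=i$ for each $i\in I\cap J$; a direct count gives exactly $k$ supervision edges, hence exactly $k$ assignments, and one checks $\sigma$ is a genuine bijection $I\to J$. For a diagonal supervision $(i,i)$ the assignment is $\Id$; for a path-closing supervision the assignment is the cycle formed by the path together with its closing edge, completed by loops; and the pure cycles of $\tau$ are absorbed into one chosen assignment (the corresponding loops being replaced), all remaining nodes carrying loops. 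Since each closing edge is the very supervision edge whose weight is subtracted in the base value, the contributions telescope to $\sum_{\text{paths}}w(P)+w(\text{cycles of }\tau)=w(\tau)=A^{\wedge n-k}_{I^c,J^c}$, which under the equality hypothesis is the optimal base value $\adj(A)^{\wedge k}_{J,I}$; so the constructed set of supervised assignments is optimal.

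The step needing most care, and the main obstacle, is this reconstruction bookkeeping: verifying that closing the paths and completing diagonally on $I\cap J$ really yields a bijection $\sigma\colon I\to J$ with precisely $k$ edges, and that the pure cycles of $\tau$ (which carry no supervision edge) can be placed inside a single assignment without corrupting the base value. The latter is where normalization is essential: by Lemma~\ref{norm} every cycle has weight at most that of the identity on its node set, so absorbing these cycles contributes exactly their own weight and introduces nothing spurious. All remaining operations, namely the cycle-and-path decomposition of $\tau$ and the explicit listing of the $k$ permutations, are $O(n^2)$, so the total cost is dominated by the two assignment solves and remains $O(n^3)$.
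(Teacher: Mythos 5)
Your proposal is correct and follows essentially the same route as the paper's proof: solve the single $(n-k)\times(n-k)$ assignment problem on $A[I^c,J^c]$ via the Hungarian method, decompose the optimal bijection $\tau$ into elementary paths and cycles, close each path with a supervision edge from $I\setminus J$ to $J\setminus I$, and complete with identity assignments and loop supervisions on $I\cap J$. The only cosmetic differences are that you make the normalization step explicit (the paper simply works in the normalized case, where cycles of $\tau$ may be assumed to be loops) and that you absorb the cycles of $\tau$ into one chosen assignment rather than replacing them by loops; under the equality hypothesis both variants attain the same optimal base value $w(\tau)=A^{\wedge (n-k)}_{I^c,J^c}$.
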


\if{
\todo[inline]{Currently written in normalised case, extend to general case.

A: see generelized case remark (\ref{nnrm}). it's not really relevant to graphs in my opinion.\\

S: If it is not relevant then why is it not easy to extend? It seems to me that you are right about 
renumbering and we can keep at least that part of your remark (although I
commented it out).  However, it is not obviously so about rescaling.. 

I now have to agree with you that introducing weights of loops is not so straightforward. 
Regarding Case~2, if we have a configuration of paths resembling figure "8" then what is peculiar is that 
after replacing cycles with loops we will have two loops in the node where the cycles intersect and one loop in
all other points, while replacing two bijections by two bijections introduces only one extra loop at each of these nodes..
}
}\fi

\proof  Assume $\tau:I^C\rightarrow J^C$ attains $A^{\wedge(n-k)}_{I^C, J^C}$.  Then $\tau$ is composed of elementary paths $P\in\mathcal{P}$ and cycles $C\in\mathcal{C}$.  Observe that the cycles in $\mathcal{C}$ can be assumed loops (given that we are working in the normalized case).

For each path we construct one assignment  and identify its supervised edge as follows:

Let $p=(i_1, j_1=i_2, j_2=i_3,\dots,j_{k-1}=i_t, j_t)$ where $i_1,\dots i_t\in I$ and $j_1,\dots, j_t\in J$.
The edge/assignment $(t(p), s(p))$ completes $p$ into a cycle, and we add loops $(r,r)$ for $r\in [n]\setminus\{i_1,\dots,i_k\}$ to obtain $\pi_p\in S_n$.  The supervised edge in $\pi_p$ is $(t(p), s(p))$.

Note that, if there are less than $k$ elementary paths in the decomposition of $\tau$, then the remaining permutations and supervised edges are respectively  identity permutations and loops  between unassigned vertices.  
Finally, observe that there will never be more than $k$ elementary paths. 
 The most computationally expensive part of this procedure is finding the bijection at the start, this is $\mathcal{O}((n-k)^3)$ by the Hungarian Method.
\endproof

\if{
\section{Powers and conjugation of weight matrices}
\todo[inline]
{
{\bf S:} What did you mean to do in this section, and how is it necessary here?
}

\section{Notes on Complexity}

For square matrices $A\in\mathbb{R}^{n\times n}$ the assignment problem is solvable in $\mathcal{O}(n^3)$ time by the Hungarian method (H.M.) \cite{}.

Here we calculate the following:

$$\adj(A)\in\rmax^{n\times n}: \ \adj(A)_{ij}=\bigoplus_{\sigma\in S_{n-1}} w(\sigma, A_{\{j\}^c,\{i\}^c})=\bigoplus_{\pi\in S_n: \pi(j)=i} \bigg(w(\pi, A)-A_{j,i}\bigg),$$
Calculating $\adj(A)$ is $\mathcal{O}(n^5)$ ($n^2$ entries each $\mathcal{O}((n-1)^3)$ by H.M.).

$$A^{\wedge k}\in\rmax^{{n\choose k} \times {n\choose k}}: \ A^{\wedge k}_{I,J}=\bigoplus_{\pi\in S_k} w(\pi, A_{I, J})$$

Calculating $A^{\wedge k}_{IJ}$ is $\mathcal{O}(k^3)$ by H.M.

Calculating $A^{\wedge k}$ is $\mathcal{O}({n\choose k}^2k^3)$ which is only polynomial for \underline{fixed} $k$.  

So calculating the value of both LHS and RHS of tropical Jacobi identity can be done in polynomial time (since we don't have to calculate the full compound matrix).

\section{Open Questions and Discussion}
\begin{itemize}
\item We optimise a series of assignments given that one edge is fixed in each permutation.  But these 'special' assignments ('head of team') cannot be prechosen:  it depends on the permutation of max weight in $per(adj(A))$.   Do we have any control over the cost of these 'special' assignments?  

\item Necessary and/or sufficient conditions on \underline{when} we have each of the cases described by the Theorem~\ref{thm:JacobiGraph} 

\item A \textbf{Latin square} of order $n$ is an $n\times n$ array where each cell contains exactly one of the symbols $1,\dots,n$ so that each symbol appears exactly once in each row and column.  So, a Latin square can be interpreted as an $(n,n)$-regular graph.  Is this interesting?  

\item  (known definitions, useful?):  a \textbf{factor} or a graph $G$ is a spanning subgraph.  A $k$-factor of $G$ is a spanning $k$-regular subgraph.  A $k$-factorization partitions the edges into disjoint $k$-factors.  A $1$-factor is a perfect matching.

\item  What does the tropical Jacobi identity tell us about the assignments?  Can it be used to calculate the optimum assignments quicker in some cases?  If it holds with equality then what are the permutations?  \textbf{Partial answer Sec \ref{Sec:Jacobi.and.Assignment}.}
\item Is there a bound on how far $\adj(A)_{IJ}^{\wedge k}$ can be from $A^{\wedge (n-k)}_{J^CI^C}$? \textbf{Partial answer was in Sec \ref{Sec:JacobiEquality} of the previous version.}
\item Quicker way to calculate the optimal permutations (without so many calls to the Hungarian Algorithm)?
\item Quicker way to calculate compound matrix?
\item Relationship between $\adj(A)^{\wedge k}$ and $\adj(A^{\wedge k})$?
\item  Is there any meaning/usefulness to counting the number of times an edge appears in the bijections of the adjoint?  In this example the identity edges appeared most frequently in the matrix of bijections, the edge $(1,2)$ appears in 5 bijections, $(4,1)$ in 3 and $(2,4)$ in 4....
\end{itemize}

}\fi

\bibliographystyle{alpha}

\bibliography{tropical}
\end{document}